\renewcommand*{\marginpar}[1]{} 
\newcommand{\N}{\mathbb{N}}
\newcommand{\Z}{\mathbb{Z}}
\newcommand{\R}{\mathbb{R}}
\newcommand{\C}{\mathbb{C}}
\newcommand{\E}{\mathbb{E}}
\newcommand{\spr}[2]{\langle #1, #2 \rangle}
\newcommand{\ck}{\check{\phantom{i}}}
\newcommand{\HI}{H^\infty}
\DeclareMathOperator{\loc}{loc}
\newcommand{\Sobolev}{W}
\newcommand{\Sob}{\Sobolev_2}
\newcommand{\Soa}{\Sobolev^\alpha_2}
\newcommand{\Soaloc}{\Sobolev^\alpha_{2,\loc}}
\newcommand{\Wor}{\widetilde{\mathcal{H}}}
\newcommand{\Wa}{\Wor^\alpha_2}
\newcommand{\Hor}{\mathcal{H}}
\newcommand{\Ha}{\Hor^\alpha_2}
\newcommand{\Sobexp}{\mathcal{W}}
\newcommand{\Sea}{\Sobexp^\alpha_2}
\newcommand{\equi}{\psi}
\newcommand{\tdyad}{\widetilde{\dyad}}
\newcommand{\Fdyad}{\phi}
\newcommand{\dyad}{\varphi}
\newcommand{\ta}{\langle t \rangle^\alpha}
\newcommand{\tma}{\langle t \rangle^{-\alpha}}
\DeclareMathOperator{\supp}{supp}
\DeclareMathOperator{\vect}{span}
\DeclareMathOperator{\Str}{Str}
\DeclareMathOperator{\Hol}{Hol}
\DeclareMathOperator{\sgn}{sgn}
\let\Re=\relax \DeclareMathOperator{\Re}{Re}
\let\Im=\relax \DeclareMathOperator{\Im}{Im}
\newcommand{\bignorm}[1]{\Bigl\Vert#1\Bigr\Vert}
\newcommand{\Bignorm}[1]{\Bigl\Vert#1\Bigr\Vert}
\newtheorem{thmalt}{Theorem}[section]
\theoremstyle{definition}
\newtheorem{rem}[thmalt]{Remark}
\newtheorem{defi}[thmalt]{Definition}
\newtheorem{thm}[thmalt]{Theorem}
\newtheorem{lem}[thmalt]{Lemma}
\newtheorem{prop}[thmalt]{Proposition}
\newtheorem{exa}[thmalt]{Example}
\newtheorem{nota}[thmalt]{Notation}
\numberwithin{equation}{section}
\title[Spectral multiplier theorems and averaged $R$-boundedness]
 {Spectral multiplier theorems and averaged $R$-boundedness} 
\author[Ch. Kriegler]{Christoph Kriegler}
\address{Christoph Kriegler\\
Laboratoire de Math\'ematiques (CNRS UMR 6620)\\
Universit\'e Blaise-Pascal (Clermont-Ferrand 2)\\
Campus Universitaire des C\'ezeaux\\
3, place Vasarely\\
TSA 60026\\
CS 60026\\
63 178 Aubi\`ere Cedex\\
France
}
\email{christoph.kriegler@math.univ-bpclermont.fr}
\thanks{The first named author acknowledges financial support from the Franco-German University (DFH-UFA) and the Karlsruhe House of Young Scientists (KHYS).
The second named author acknowledges the support by the DFG through CRC 1173.}
\author[L. Weis]{Lutz Weis}
\address{Lutz Weis\\
Karlsruher Institut f\"ur Technologie\\
Fakult\"at f\"ur Mathematik\\
Institut f\"ur Analysis\\
Englerstra\ss{}e 2\\
76131 Karlsruhe\\
Germany}
\email{lutz.weis@kit.edu}
\date{\today}
\subjclass[2010]{42A45, 47A60, 47B40, 47D03}
\keywords{Functional calculus, H\"ormander Type Spectral Multiplier Theorems}
\begin{document}


\begin{abstract}
Let $A$ be a $0$-sectorial operator with a bounded $H^\infty(\Sigma_\sigma)$-calculus for some $\sigma \in (0,\pi),$ e.g. a Laplace type operator on $L^p(\Omega),\: 1 < p < \infty,$
where $\Omega$ is a manifold or a graph.
We show that $A$ has a $\Ha(\R_+)$ H\"ormander functional calculus if and only if certain operator families derived from the resolvent $(\lambda - A)^{-1},$ the semigroup $e^{-zA},$ the wave operators $e^{itA}$ or the imaginary powers $A^{it}$ of $A$ are $R$-bounded in an $L^2$-averaged sense.
If $X$ is an $L^p(\Omega)$ space with $1 \leq p < \infty,$ $R$-boundedness reduces to well-known estimates of square sums.
\end{abstract}

\maketitle

\section{Introduction}\label{Sec 1 Intro}

H\"ormander's Fourier multiplier theorem states that for a function $f \in \Ha(\R_+)$ the operator $f(-\Delta)$,
defined in terms of the functional calculus on $L^2(\R^d)$ can be extended to $L^p(\R^d)$ if
$1 < p < \infty$ and $\alpha > \frac{d}{2}.$
Here
\[ \Ha(\R_+) = \{ f \in C(\R_+,\C) : \sup_{t > 0} \| \phi f(t \cdot) \|_{\Soa(\R_+)} < \infty \} \]
where $\phi \in C^\infty(\R)$ with compact $\supp \phi \subset (0,\infty)$ is a cut-off function and $\Soa(\R_+)$ is the usual Riesz-potential Sobolev space.
For $\alpha \in \N,$ an equivalent norm on $\Ha$ is given by the ``classical'' H\"ormander condition
\[ \sup_{R > 0,\,\beta =0,\ldots,\alpha} \frac1R \int_R^{2 R} |t^\beta D^\beta f(t)|^2 dt < \infty . \]
There is a large literature extending such a spectral multiplier result to more general selfadjoint operators on $L^p(\Omega),$ e.g. for Laplace type operators on manifolds, infinite graphs and fractals (see e.g. \cite{Alex,COSY,Duon,DuOS,KuUh,Ouha} and the references therein).
There are various approaches to the $\Ha$ calculus using
kernel estimates, maximal estimates or square function estimates for the resolvent $(\lambda - A)^{-1},$ the analytic semigroup $e^{-zA}$ generated by $-A$ and their ``boundary'', the wave operators $e^{itA}$, or the imaginary powers $A^{it}$ of $A.$
Relevant are e.g. estimates on operator functions such as ($\alpha > \frac12,\: m > \alpha - \frac12$ are fixed)
\begin{itemize}
\item $T_\theta(t) = A^{\frac12} e^{-e^{i\theta}tA},\quad t \in \R_+,$
\item $R_\theta(t) = A^{\frac12} R(e^{i\theta}t,A),\quad t \in \R_+,$
\item $W(s) = |s|^{-\alpha} A^{-\alpha + \frac12} (e^{isA} - 1)^m,\quad s \in \R,$
\item $I(t) = (1 + |t|)^{-\alpha} A^{it},\quad t \in \R.$
\end{itemize}

Many of these estimates imply or are closely related to square sum estimates of the following form
\begin{equation}\label{Equ Intro R-bounded}
\left\| \left( \sum_i |S_i x_i|^2 \right)^{\frac12} \right\|_{L^p} \leq C_1 \left\| \left( \sum_i |x_i|^2 \right)^{\frac12} \right\|_{L^p}
\end{equation}
where $x_i \in L^p(\Omega)$ and the $S_i$ are members of one of the families listed above
(see e.g. \cite{BoCl,Stem} for an early appearance of this square sum estimate in the context of spectral multiplier theorems).
If $(r_n)$ is a sequence of Rademacher functions on $[0,1]$ one can reformulate \eqref{Equ Intro R-bounded} equivalently as
\begin{equation}\label{Equ Intro R-bounded 2}
\int_0^1 \| \sum_i r_i(\omega) S_i x_i \| d\omega \leq C_2 \int_0^1 \| \sum_i r_i(\omega) x_i \| d \omega.
\end{equation}
This statement makes sense in an arbitrary Banach space $X$ and a set $\tau \subset B(X)$ is called $R$-bounded if \eqref{Equ Intro R-bounded 2} holds for all $S_i \in \tau$ and $x_i \in X.$
Using $R$-boundedness in place of kernel estimates and the holomorphic $H^\infty(\Sigma_\sigma)$ calculus instead of the spectral theorem for selfadjoint operators, one can develop a theory of spectral multiplier theorems for $0$-sectorial operators on Banach spaces (see \cite{Kr,Kr2,Kr3,KrW1,KrW2}).
Again, $R$-bounds for one of the operator families listed above are sufficient to secure $\Ha(\R_+)$ spectral theorems for such operators $A.$
However, neither in this general framework nor in the case of Laplace type operators on an $L^p(\Omega)$ space (see above), one obtains necessary and sufficient conditions in terms of $R$-bounds or kernel estimates.
This is related to the (usually) difficult task of determining the optimal $\alpha$ for the $\Ha(\R_+)$ spectral calculus of a given operator $A.$
Thus the purpose of this paper is to give a characterization of the $\Ha(\R_+)$ spectral multiplier theorem in terms of an $L^2$-averaged $R$-boundedness condition.
More precisely, let $t \in J \mapsto N(t) \in B(X)$ be weakly square integrable on an interval $J.$
Then $(N(t))_{t \in J}$ is called $R[L^2]$-bounded if for $h \in L^2(J)$ with $\|h\|_{L^2(J)} \leq 1$ the strong integrals
\[ N_{h}x = \int_J h(t) N(t) x dt,\quad x \in X \]
define an $R$-bounded subset $\{ N_h :\: \|h\|_{L^2(J)} \leq 1 \}$ of $B(X).$
By $R[L^2(J)](N(t)),$ we denote the $R$-bound of this set.
In a Hilbert space $X,$ $R[L^2(J)]$-boundedness reduces to the simple estimate
\[ \left( \int_J |\spr{N(t)x}{y}|^2 dt \right)^{\frac12} \leq C \|x\|\,\|y\| \text{ for all }x,y \in H.\]
Assume now that $A$ is a $0$-sectorial operator with an $H^\infty(\Sigma_\sigma)$ calculus for some $\sigma \in (0,\pi)$ on a Banach space isomorphic to a subspace of an $L^p(\Omega)$ space with $1 \leq p < \infty$ (or more generally, let $X$ have Pisier's property $(\alpha)$).
Then our main results, Theorems \ref{Thm Characterizations Hoermander calculus} and \ref{Thm Sea Ha} show (among other statements), that the following conditions on the operator function above are essentially equivalent:
\begin{align*}
& \text{1. } A\text{ has an }R\text{-bounded }\Ha\text{ spectral calculus, i.e. } \\
& \{ f(A) :\: \|f\|_{\Ha(\R_+)} \leq 1 \}\text{ is }R\text{-bounded in }B(X).\\
& \text{2. resolvents: }R[L^2(\R_+)](R_\theta(\cdot)) \leq C |\theta|^{-\alpha}\text{ for }\theta \to 0\\
& \text{3. semigroup: }R[L^2(\R_+)](T_\theta(\cdot)) \leq C (\frac{\pi}{2} - |\theta|)^{-\alpha}\text{ for }|\theta| \to \frac{\pi}{2}\\
& \text{4. wave operators: }R[L^2(\R)](W(\cdot)) < \infty\\
& \text{5. imaginary powers: }R[L^2(\R)](I(\cdot)) < \infty.
\end{align*}
The estimates on $R_\theta(\cdot)$ (resp. on $T_\theta(\cdot)$) measure the growth of the resolvent (the analytic semigroup) as we approach the spectrum of $A$ (resp. the ``boundary'' $i \R$ of $\C_+$) on rays in $\C \backslash \R_+$ (in $\C_+$).
Clearly, the $R[L^2(J)]$-boundedness of $I(\cdot)$ measures the polynomial growth of the imaginary powers and $W(\cdot)$ the growth of the regularized wave operators $e^{itA}.$
The latter regularization is necessary since outside Hilbert space the operators $e^{itA}$ are usually unbounded.
The equivalence of these statements shows in particular that estimates of resolvents, the semigroup, wave operators or imaginary powers are all equivalent ways to obtain the boundedness of $f(A)$ for arbitrary $f \in \Ha(\R_+).$

We end this introduction with an overview of the article.
Section \ref{Sec 2 Prelims} contains the background on $\HI$ functional calculus for a sectorial operator $A$, $R$-boundedness as well as the definition of relevant function spaces.
In Section \ref{Sec Hormander classes} we introduce the H\"ormander function spaces and their functional calculus.
In Section \ref{Sec 3 Wave} as a preparation for the proof of Theorem \ref{Thm Characterizations Hoermander calculus}, we relate the wave operators $e^{isA}$ with imaginary powers $A^{it}$ via the Mellin transform.
In Section \ref{Sec 4 Averaged}, we study the notion of averaged $R$-boundedness.
We feel that it is worthwhile to introduce averaged $R$-boundedness also for other function spaces than $L^2(J)$ since these notions appeared already implicitly in the literature and have proven to be quite useful \cite[Proposition 4.1, Remark 4.2]{HyVe}, \cite[Corollary 2.14]{KuWe}, \cite[Corollary 3.19]{HaKu}.
Finally in Section \ref{Sec Main Results} we state the main Theorem \ref{Thm Characterizations Hoermander calculus}, which establishes equivalences between the smaller $\Sea$ functional calculus and averaged $R$-boundedness of the operator families above
(see Section \ref{Sec Hormander classes} for the definition of this function space).
However, most of the classical spectral multipliers
(e.g. $f(\lambda) = \lambda^{it}$) belong to $\Ha \backslash \Sea.$
Therefore we extend in Theorem \ref{Thm Sea Ha} this calculus to $\Ha$ by means of a localization procedure.
Finally, in Section \ref{Sec Remarks}, we indicate how our main results can be transferred to bisectorial and strip-type operators.

\section{Preliminaries}\label{Sec 2 Prelims}

\subsection{$0$-sectorial operators}\label{Subsec A B}

We briefly recall standard notions on $\HI$ calculus.
For $\omega \in (0,\pi)$ we let $\Sigma_\omega = \{ z \in \C \backslash \{ 0 \} :\: | \arg z | < \omega \}$ be the sector around the positive axis of aperture angle $2 \omega.$
We further define $\HI(\Sigma_\omega)$ to be the space of bounded holomorphic functions on $\Sigma_\omega.$
This space is a Banach algebra when equipped with the norm $\|f\|_{\infty,\omega} = \sup_{\lambda \in \Sigma_\omega} |f(\lambda)|.$

A closed operator $A : D(A) \subset X \to X$ is called $\omega$-sectorial, if the spectrum $\sigma(A)$ is contained in $\overline{\Sigma_\omega},$ $R(A)$ is dense in $X$ and
\begin{equation}\label{Equ Def Sectorial}
\text{for all }\theta > \omega\text{ there is a }C_\theta > 0\text{ such that }\|\lambda (\lambda - A)^{-1}\| \leq C_\theta \text{ for all }\lambda \in \overline{\Sigma_\theta}^c.
\end{equation}
Note that $\overline{R(A)} = X$ along with \eqref{Equ Def Sectorial} implies that $A$ is injective.
In the literature, in the definition of sectoriality, the condition $\overline{R(A)} = X$ is sometimes omitted.
Note that if $A$ satisfies the conditions defining $\omega$-sectoriality except $\overline{R(A)} = X$ on $X = L^p(\Omega),\, 1 < p < \infty$ (or any reflexive space),
then there is a canonical decomposition $X  = \overline{R(A)} \oplus N(A),\,x = x_1 \oplus x_2,$ and $A = A_1 \oplus 0,\,x \mapsto A x_1 \oplus 0,$
such that $A_1$ is $\omega$-sectorial on the space $\overline{R(A)}$ with domain $D(A_1) = \overline{R(A)} \cap D(A).$

For an $\omega$-sectorial operator $A$ and a function $f \in \HI(\Sigma_\theta)$ for some $\theta \in (\omega,\pi)$ that satisfies moreover an estimate $|f(\lambda)| \leq C |\lambda|^\epsilon / | 1 + \lambda |^{2\epsilon},$
one defines the operator
\begin{equation}\label{Equ Cauchy Integral Formula}
f(A) = \frac{1}{2 \pi i} \int_{\Gamma} f(\lambda) (\lambda - A)^{-1} d\lambda ,
\end{equation}
where $\Gamma$ is the boundary of a sector $\Sigma_\sigma$ with $\sigma \in (\omega,\theta),$ oriented counterclockwise.
By the estimate of $f,$ the integral converges in norm and defines a bounded operator.
If moreover there is an estimate $\|f(A)\| \leq C \|f\|_{\infty,\theta}$ with $C$ uniform over all such functions, then $A$ is said to have a bounded $\HI(\Sigma_\theta)$ calculus.
In this case, there exists a bounded homomorphism $\HI(\Sigma_\theta) \to B(X),\,f \mapsto f(A)$ extending the Cauchy integral formula \eqref{Equ Cauchy Integral Formula}.

We refer to \cite{CDMY} for details.
We call $A$ $0$-sectorial if $A$ is $\omega$-sectorial for all $\omega > 0.$

For $\omega \in (0,\pi),$ define the algebras of functions $\Hol(\Sigma_\omega) = \{ f : \Sigma_\omega \to \C :\: \exists \: n \in \N :\: \rho^n f \in \HI(\Sigma_\omega) \},$
where $\rho(\lambda) = \lambda (1 + \lambda)^{-2}.$
For a proof of the following lemma, we refer to \cite[Section 15B]{KuWe} and \cite{Haasb},\cite{Haasc}.

\begin{lem}\label{Lem Hol}
Let $A$ be a $0$-sectorial operator.
There exists a linear mapping, called the extended holomorphic calculus,
\begin{equation}\label{Equ Extended HI calculus}
\bigcup_{0 < \omega < \pi} \Hol(\Sigma_\omega) \to \{ \text{closed and densely defined operators on }X \},\: f \mapsto f(A)
\end{equation}
extending \eqref{Equ Cauchy Integral Formula} such that for any $f,g \in \Hol(\Sigma_\omega),$ $f(A)g(A)x = (f g)(A)x$ for $x \in \{y \in D(g(A)):\: g(A)y \in D(f(A)) \} \subset D((fg)(A))$ and
$D(f(A)) = \{ x \in X :\: (\rho^n f)(A) x \in D(\rho(A)^{-n}) = D(A^n) \cap R(A^n) \},$ where $(\rho^n f)(A)$ is given by \eqref{Equ Cauchy Integral Formula}, i.e. $n \in \N$ is sufficiently large.
\end{lem}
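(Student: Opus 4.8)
The plan is to carry out the classical regularization construction for the extended functional calculus; the details are in \cite[Section~15B]{KuWe} and \cite{Haasb}, and I sketch the main line here. First I would extract the two structural consequences of $0$-sectoriality that make everything run: since $\overline{R(A)}=X$ together with \eqref{Equ Def Sectorial} forces $A$ to be injective, and $\overline{D(A)}=X$ holds as well, the bounded operator $\rho(A)=A(1+A)^{-2}$ (defined through the rational calculus) is injective with dense range. Hence $\rho(A)^{-n}$ is closed and densely defined with $D(\rho(A)^{-n})=R(\rho(A)^{n})$; writing $\rho^{-n}(\lambda)=\lambda^{-n}(1+\lambda)^{2n}$ and using that the rational operators in play commute, this range equals $R(A^{n})\cap D(A^{n})$, which is dense by the standard approximate-identity argument for sectorial operators with dense domain and range (e.g. $\psi_\varepsilon(A)\to\Id$ strongly with $\psi_\varepsilon(\lambda)=\lambda(\varepsilon+\lambda)^{-1}(1+\varepsilon\lambda)^{-1}$, iterated $n$ times). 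Since $A$ is $0$-sectorial, any $f\in\Hol(\Sigma_\omega)$ is holomorphic on a sector strictly containing the contour $\Gamma$ of \eqref{Equ Cauchy Integral Formula}, so that formula applies to the regularizations below.

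Next I would record what the ``primary'' calculus — the one defined in the excerpt just before the lemma — provides, since those are the only inputs I need from it. For $g$ holomorphic on a sector with $|g(\lambda)|\le C|\lambda|^{\varepsilon}|1+\lambda|^{-2\varepsilon}$, the Cauchy integral converges absolutely, Cauchy's theorem makes it independent of the contour, and a Fubini argument combined with the resolvent identity $(\lambda-A)^{-1}(\mu-A)^{-1}=(\mu-\lambda)^{-1}[(\lambda-A)^{-1}-(\mu-A)^{-1}]$ shows that $g\mapsto g(A)$ is multiplicative on this ``decay class'' and compatible with the rational calculus, in particular $(\rho^{k}g)(A)=\rho(A)^{k}g(A)$ whenever both sides are defined.

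Then comes the construction itself. Given $f\in\Hol(\Sigma_\omega)$, I choose $n\in\N$ with $\rho^{n}f\in\HI(\Sigma_\omega)$, enlarging $n$ by $2$ if necessary so that $\rho^{n}f=\rho^{2}\cdot(\rho^{n-2}f)$ inherits the decay $|\lambda|^{\pm2}$ from $\rho^{2}$ and hence lies in the decay class; then $(\rho^{n}f)(A)$ is a bounded operator via \eqref{Equ Cauchy Integral Formula}, and I set
\[ f(A):=\rho(A)^{-n}\,(\rho^{n}f)(A),\qquad D(f(A)):=\{\,x\in X:\ (\rho^{n}f)(A)\,x\in D(\rho(A)^{-n})\,\}. \]
Independence of $n$ follows because for $n'>n$ one has $(\rho^{n'}f)(A)=\rho(A)^{n'-n}(\rho^{n}f)(A)$ by the primary multiplicativity, and $\rho(A)^{n'-n}$ is bounded and injective, so $\rho(A)^{-n'}(\rho^{n'}f)(A)$ and $\rho(A)^{-n}(\rho^{n}f)(A)$ coincide as operators, domains included; linearity is then clear after passing to a common $n$, and if $f$ already lies in the decay class then $(\rho^{n}f)(A)=\rho(A)^{n}f(A)$ on all of $X$, so the definition extends \eqref{Equ Cauchy Integral Formula}. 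The operator $f(A)$ is closed, being the composition of the closed operator $\rho(A)^{-n}$ with the bounded operator $(\rho^{n}f)(A)$ on its natural domain, and densely defined because $R(\rho(A)^{n})=D(\rho(A)^{-n})\subset D(f(A))$: for $x=\rho(A)^{n}y$ the primary multiplicativity gives $(\rho^{n}f)(A)x=\rho(A)^{n}(\rho^{n}f)(A)y\in R(\rho(A)^{n})$, and $R(\rho(A)^{n})$ is dense. For the product rule I would pick a common $n$ with $\rho^{n}f,\rho^{n}g\in\HI(\Sigma_\omega)$, note that $\rho^{n+1}f=\rho\cdot\rho^{n}f$ and $\rho^{n+1}g$ lie in the decay class and that $\rho^{2n+2}fg=(\rho^{n+1}f)(\rho^{n+1}g)$ regularizes $fg$, and then for $y$ with $y\in D(g(A))$ and $g(A)y\in D(f(A))$ compute, using $(\rho^{n+1}f)(A)=\rho(A)(\rho^{n}f)(A)$, the primary multiplicativity, and the commutation of the bounded operators $\rho(A)$ and $(\rho^{n}f)(A)$,
\[ (fg)(A)y=\rho(A)^{-(2n+2)}(\rho^{n+1}f)(A)(\rho^{n+1}g)(A)y=\rho(A)^{-n}(\rho^{n}f)(A)\,\rho(A)^{-n}(\rho^{n}g)(A)y=f(A)g(A)y, \]
the intermediate identities being precisely where one checks $y\in D((fg)(A))$ and that $\rho(A)^{-n}(\rho^{n}g)(A)y=g(A)y$ lands in $D(f(A))$ by hypothesis.

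I expect the only real friction to be the bookkeeping of domains of unbounded operators in the last step — verifying the $n$-independence and running the product computation so that every composition $\rho(A)^{-k}(\cdot)$ is taken on its correct natural domain, with no spurious domain enlargement. Conceptually nothing is deep; the two facts genuinely imported from elsewhere are the multiplicativity and contour-independence of the primary Cauchy-integral calculus (Fubini plus the resolvent identity) and the density of $R(A^{n})\cap D(A^{n})$ for $0$-sectorial $A$.
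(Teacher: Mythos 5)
Your sketch is correct and is exactly the standard regularization construction via $\rho(A)^{-n}(\rho^n f)(A)$ that the cited references (\cite[Section 15B]{KuWe}, \cite[pp.~91--96]{Haasa}) carry out; the paper gives no proof of its own and simply defers to those sources. The only point you assert without comment is $\overline{D(A)}=X$, which is not explicitly in the paper's definition of $0$-sectoriality but is needed for the density of $D(A^n)\cap R(A^n)$ and hence of $D(f(A))$ — in the cited references this is either part of the standing hypotheses or recovered from the setting (e.g.\ reflexivity), so it would be worth a line to say where you take it from.
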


\subsection{Function spaces on the line and half-line}\label{Subsec Prelims Function spaces}

In this subsection, we introduce several spaces of differentiable functions on $\R_+ = (0,\infty)$ and $\R.$
Let $\equi \in C^\infty_c(\R).$
Assume that $\supp \equi \subset [-1,1]$ and $\sum_{n=-\infty}^\infty \equi(t-n) = 1$ for all $t \in \R.$
For $n \in \Z,$ we put $\equi_n = \equi(\cdot - n)$ and call $(\equi_n)_{n \in \Z}$ an equidistant partition of unity.
Let $\dyad \in C^\infty_c(\R_+).$
Assume that $\supp \dyad \subset [\frac12,2]$ and $\sum_{n=-\infty}^\infty \dyad(2^{-n} t) =1$ for all $t > 0.$
For $n \in \Z,$ we put $\dyad_n = \dyad(2^{-n} \cdot)$ and call $(\dyad_n)_{n \in \Z}$ a dyadic partition of unity.
Next let $\Fdyad_0,\,\Fdyad_1 \in C^\infty_c(\R)$ such that $\supp \Fdyad_1 \subset [\frac12,2]$ and $\supp \Fdyad_0 \subset [-1,1].$
For $n \geq 2,$ put $\Fdyad_n = \Fdyad_1(2^{1-n}\cdot),$ so that $\supp \Fdyad_n \subset [2^{n-2},2^n].$
For $n \leq -1,$ put $\Fdyad_n = \Fdyad_{-n}(-\cdot).$
We assume that $\sum_{n \in \Z} \Fdyad_n(t) = 1$ for all $t \in \R.$
Then we call $(\Fdyad_n)_{n \in \Z}$ a dyadic partition of unity on $\R$, which we will exclusively use to decompose the Fourier image of a function.
For the existence of such partitions, we refer to the idea in \cite[Lemma 6.1.7]{BeL}.
We recall the following classical function spaces:

\begin{nota}\label{Nota Classical function spaces}
Let $m \in \N_0$ and $\alpha > 0.$
\begin{enumerate}
\item $C^m_b = \{ f : \R \to \C:\: f \,m\text{-times diff. and }f,f',\ldots,f^{(m)}\text{ uniformly cont. and bounded}\}.$
\item $\Soa = \{ f \in L^2(\R):\: \|f\|_{\Soa} = \| (\hat{f}(t)(1 + |t|)^\alpha)\ck\|_2 < \infty \}.$
\end{enumerate}
The space $\Soa$ is a Banach algebra with respect to pointwise multiplication if $\alpha > \frac12,$ \cite[p.~222]{RuSi}.

Further we also consider the local space
\begin{enumerate}
\item[(3)] $\Soaloc = \{ f : \R \to \C :\: f\varphi \in \Soa\text{ for all }\varphi \in C^\infty_c\}$ for $\alpha > \frac12.$
\end{enumerate}
This space is closed under pointwise multiplication.
Indeed, if $\varphi \in C^\infty_c$ is given, choose $\psi \in C^\infty_c$ such that $\psi \varphi = \varphi.$
For $f,g \in \Soaloc,$ we have $(fg)\varphi = (f \varphi)(g \psi) \in \Soaloc.$
\end{nota}

\subsection{Rademachers, Gaussians and $R$-boundedness}\label{Subsec Prelims Rad}

A classical theorem of Marcinkiewicz and Zygmund states that for elements $x_1,\ldots,x_n \in L^p(U,\mu)$ we can express ``square sums'' in terms of random sums
\[ \left\| \left( \sum_{j=1}^n |x_j(\cdot)|^2 \right)^{\frac12} \right\|_{L^p(U)}
\cong \left( \E \| \sum_{j=1}^n \epsilon_j x_j \|_{L^p(U)}^q \right)^{\frac1q}
\cong \left( \E \| \sum_{j=1}^n \gamma_j x_j \|_{L^p(U)}^q \right)^{\frac1q} \]
with constants only depending on $p,q \in [1,\infty).$
Here $(\epsilon_j)_j$ is a sequence of independent Bernoulli random variables (with $P(\epsilon_j = 1) = P(\epsilon_j = -1) =\frac12$) and $(\gamma_j)_j$ is a sequence of independent standard Gaussian random variables.
Following \cite{Bou} it has become standard by now to replace square functions in the theory of Banach space valued function spaces by such random sums (see e.g. \cite{KuWe}).
Note however that Bernoulli sums and Gaussian sums for $x_1,\ldots,x_n$ in a Banach space $X$ are only equivalent if $X$ has finite cotype (see \cite[p.~218]{DiJT} for details).

Let $\tau$ be a subset of $B(X).$
We say that $\tau$ is $R$-bounded if there exists a $C < \infty$ such that
\[ \E \bignorm{ \sum_{k=1}^n \epsilon_k T_k x_k } \leq C \E \bignorm{ \sum_{k=1}^n \epsilon_k x_k } \]
for any $n \in \N,$ $T_1,\ldots, T_n \in \tau$ and $x_1,\ldots,x_n \in X.$
The smallest admissible constant $C$ is denoted by $R(\tau).$
We remark that one always has $R(\tau) \geq \sup_{T \in \tau} \|T\|$ and equality holds if $X$ is a Hilbert space.

Recall that by definition, $X$ has Pisier's property $(\alpha)$ if for any finite family $x_{k,l}$ in $X,$ $(k,l) \in F,$ where $F \subset \Z \times \Z$ is a finite array,
we have a uniform equivalence
\[ \E_\omega \E_{\omega'} \bignorm{ \sum_{(k,l) \in F} \epsilon_k(\omega) \epsilon_l(\omega') x_{k,l} }_{X} \cong \E_\omega \bignorm{ \sum_{(k,l) \in F} \epsilon_{k,l}(\omega) x_{k,l} }_{X}. \]
Note that property $(\alpha)$ is inherited by closed subspaces, and that an $L^p$ space has property $(\alpha)$ provided $1 \leq p < \infty$ \cite[Section 4]{KuWe}.

\section{H\"ormander classes}\label{Sec Hormander classes}

Aside from the classical spaces in Notation \ref{Nota Classical function spaces} we introduce the following H\"ormander class.
We write from now on
\[f_e : J \to \C,\,z \mapsto f(e^z)\]
for a function $f : I \to \C$ such that $I \subset \C \backslash (-\infty,0]$ and $J = \{ z \in \C : \: | \Im z | < \pi,\:e^z \in I \}.$

\begin{defi}\label{Def Mih Hor}~
\begin{enumerate}
\item Let $\alpha > \frac12.$
We define
\[ \Sea = \{ f : (0,\infty) \to \C :\: \|f\|_{\Sea} = \|f_e\|_{\Soa} < \infty \} \]
and equip it with the norm $\|f\|_{\Sea}.$
\item
Let $(\equi_n)_{n \in \Z}$ be an equidistant partition of unity and $\alpha > \frac12.$
We define the H\"ormander class
\[\Ha = \{ f \in L^2_{\text{loc}}(\R_+) :\: \|f\|_{\Ha} = \sup_{n \in \Z} \|\equi_n f_e\|_{\Sob^\alpha} < \infty\}\]
and equip it with the norm $\|f\|_{\Ha}.$
\end{enumerate}
\end{defi}

We have the following elementary properties of H\"ormander spaces.
Its proof may be found in \cite[Propositions 4.8 and 4.9, Remark 4.16]{Kr}.

\begin{lem}\label{Lem Elementary Mih Hor}~
\begin{enumerate}
\item The spaces $\Sea$ and $\Ha$ are Banach algebras.
\item Different partitions of unity $(\equi_n)_n$ give the same space $\Ha$ with equivalent norms.
\item\label{it Embeddings Hormander}
Let $\alpha > \frac12$ and $\sigma \in (0,\pi).$
Then
\[\HI(\Sigma_\sigma) \hookrightarrow \Ha .\]
\item For any $t > 0,$ we have $\|f\|_{\Ha} \cong \|f(t \cdot)\|_{\Ha}.$
\end{enumerate}
\end{lem}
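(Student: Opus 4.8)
The plan is to transport everything to the ``logarithmic side'' by the multiplicative bijection $f \mapsto f_e$, $f_e(z) = f(e^z)$, which maps $f \in L^2_\loc(\R_+)$ to $f_e \in L^2_\loc(\R)$ and, by the very definitions, carries $\Ma$, $\Sea$, $\Ha$ isometrically onto $\Ba = \Bes^\alpha_{\infty,1}(\R)$, $\Soatwo(\R)$, and $\{g \in L^2_\loc(\R) :\: \sup_{n \in \Z} \norm{\equi_n g}_{\Soatwo} < \infty\}$. I would then isolate two elementary facts. (a) $\Ba$ and $\Soatwo$ (for $\alpha > \frac12$) are Banach algebras and contain $C^\infty_c(\R)$. (b) Translations act isometrically on $\Ba$ and $\Soatwo$; since $\equi_n = \equi(\cdot - n)$ is a translate of the fixed bump $\equi$, the multiplication operators $g \mapsto \equi_n g$ are bounded on $\Soatwo$ (and on $\Ba$) uniformly in $n$, with bound $\leq \norm{\equi}_{\Soatwo}$ (resp. $\leq \norm{\equi}_{\Ba}$), and the same holds with any fixed $\psi \in C^\infty_c(\R)$ in place of $\equi$. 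Granting (a) and (b): statement (1) for $\Ma$ and $\Sea$ is just the transport of the Banach-algebra structure of $\Ba$, $\Soatwo$ along the isometric algebra isomorphism $f \mapsto f_e$; for $\Ha$, choose $\widetilde{\equi} \in C^\infty_c$ with $\widetilde{\equi} \equiv 1$ on $\supp \equi$ and write $\equi_n (fg)_e = (\equi_n f_e)\,(\widetilde{\equi}(\cdot - n) g_e)$, so that the algebra property of $\Soatwo$ together with $\norm{\widetilde{\equi}(\cdot - n) g_e}_{\Soatwo} \lesssim \norm{g}_{\Ha}$ — obtained by expanding $\widetilde{\equi}(\cdot - n)$ against the partition $(\equi_m)_m$, only $O(1)$ of whose members meet $\supp \widetilde{\equi}(\cdot - n)$ — gives $\norm{fg}_{\Ha} \lesssim \norm{f}_{\Ha}\norm{g}_{\Ha}$; completeness of $\Ha$ follows by a routine patching of the $\Soatwo$-limits of the $(\equi_n f^{(j)}_e)_j$. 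Statement (2): for two partitions $(\equi_n)$, $(\psi_n)$ write $\psi_n f_e = \sum_{|k - n| \leq C} \psi_n \equi_k f_e$ (a finite sum by compactness of supports) and apply (b) to the multipliers $\psi_n$, obtaining $\sup_n \norm{\psi_n f_e}_{\Soatwo} \lesssim \sup_k \norm{\equi_k f_e}_{\Soatwo}$; by symmetry the two norms are equivalent. Statement (4): since $(f(t\cdot))_e = f_e(\cdot + \log t)$ is a translate of $f_e$, one has $\norm{\equi_n (f(t\cdot))_e}_{\Soatwo} = \norm{\equi(\cdot - n - \log t) f_e}_{\Soatwo}$, and the same finite-overlap estimate as in the proof of (2) (valid for any partition of unity generated by a boundedly supported bump, noting $\sum_n \equi(\cdot - n - \log t) = 1$) gives $\norm{f(t\cdot)}_{\Ha} \cong \norm{f}_{\Ha}$.

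For the chain in (3) I would treat the three inclusions separately. The inclusion $\HI(\Sigma_\sigma) \hookrightarrow \Mih^\gamma$ holds because, for $f \in \HI(\Sigma_\sigma)$, the function $f_e$ is bounded and holomorphic on the strip $\{|\Im z| < \sigma\}$, so Cauchy's inequalities give $f_e \in C^m_b$ with $\norm{f_e}_{C^m_b} \lesssim \norm{f}_{\infty,\sigma}$ (constant depending on $m,\sigma$) for every $m \in \N$; choosing $m > \gamma$ and using $C^m_b \hookrightarrow \Bes^m_{\infty,\infty} \hookrightarrow \Bes^\gamma_{\infty,1} = \Ba$ (the last embedding because $\sum_n 2^{(\gamma - m)|n|} < \infty$) yields $\norm{f_e}_{\Ba} \lesssim \norm{f}_{\infty,\sigma}$; alternatively this is already in \cite[p.~73]{CDMY}. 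For $\Mih^\gamma \hookrightarrow \Ha$ with $\gamma > \alpha > \frac12$, by (2) and translation invariance it suffices to bound $\norm{\equi f_e}_{\Soatwo} \lesssim \norm{f_e}_{\Bes^\gamma_{\infty,1}}$. Decompose $f_e = \sum_j \Delta_j f_e$ with $\Delta_j g = g \ast \check{\Fdyad_j}$ and put $a_j = 2^{|j|\gamma}\norm{\Delta_j f_e}_\infty$, so that $\sum_j a_j = \norm{f_e}_{\Bes^\gamma_{\infty,1}}$. Since $\Delta_j f_e$ is frequency-localized at $|\xi| \sim 2^{|j|}$, Bernstein's inequality gives $\norm{(\Delta_j f_e)^{(k)}}_\infty \lesssim 2^{|j|k}\norm{\Delta_j f_e}_\infty$; as $\equi$ is a fixed element of $C^\infty_c$ this yields $\norm{\equi \Delta_j f_e}_{L^2} \lesssim \norm{\Delta_j f_e}_\infty$ and $\norm{\equi \Delta_j f_e}_{\Sobolev^m_2} \lesssim 2^{|j|m}\norm{\Delta_j f_e}_\infty$ for integers $m$, and interpolating ($\Soatwo$ lying between $L^2$ and $\Sobolev^m_2$ for any integer $m > \alpha$) gives $\norm{\equi \Delta_j f_e}_{\Soatwo} \lesssim 2^{|j|\alpha}\norm{\Delta_j f_e}_\infty = 2^{(\alpha - \gamma)|j|} a_j$. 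Summing over $j$ and using $\alpha < \gamma$ gives $\norm{\equi f_e}_{\Soatwo} \leq \sum_j \norm{\equi \Delta_j f_e}_{\Soatwo} \lesssim \sum_j a_j = \norm{f_e}_{\Bes^\gamma_{\infty,1}}$.

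The remaining inclusion $\Ha \hookrightarrow \Mih^\beta$ for $\alpha > \beta + \frac12$ is where the real work lies: one must glue the purely local $\Soatwo$-control of $\equi_n f_e$ into a global bound on $\norm{f_e}_{\Bes^\beta_{\infty,1}} = \sum_k 2^{|k|\beta}\norm{\Delta_k f_e}_\infty$. First, Sobolev embedding $\Soatwo \hookrightarrow L^\infty$ ($\alpha > \frac12$) applied to each $\equi_n f_e$, together with $\sum_n \equi_n = 1$ and bounded overlap, gives $\norm{f_e}_\infty \lesssim \norm{f}_{\Ha}$. Now fix $x_0 \in \R$ and a fixed $\chi \in C^\infty_c$ equal to $1$ near $0$, and split $f_e = \chi(\cdot - x_0) f_e + (1 - \chi(\cdot - x_0)) f_e$. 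On the support of $1 - \chi(\cdot - x_0)$ one has $|x_0 - y| \gtrsim 1$, so the decay $|\check{\Fdyad_k}(u)| \lesssim 2^{(1-N)|k|}|u|^{-N}$ for $|u| \gtrsim 1$ (with an $N$-dependent constant, valid for every $N$) gives $|\Delta_k((1 - \chi(\cdot - x_0)) f_e)(x_0)| \lesssim 2^{-(N-1)|k|}\norm{f_e}_\infty$. For the local piece, $\chi(\cdot - x_0) f_e$ is a sum of $O(1)$ terms $\chi(\cdot - x_0) \equi_n f_e$, each with $\norm{\chi(\cdot - x_0) \equi_n f_e}_{\Soatwo} \lesssim \norm{\equi_n f_e}_{\Soatwo} \leq \norm{f}_{\Ha}$ by the uniform boundedness, from (b), of multiplication by the translated fixed bump $\chi(\cdot - x_0)$; hence $\norm{\chi(\cdot - x_0) f_e}_{\Soatwo} \lesssim \norm{f}_{\Ha}$, and the Sobolev--Besov embedding $\Soatwo = \Bes^\alpha_{2,2} \hookrightarrow \Bes^{\alpha - 1/2}_{\infty,\infty}$ gives $\norm{\Delta_k(\chi(\cdot - x_0) f_e)}_\infty \lesssim 2^{-(\alpha - 1/2)|k|}\norm{f}_{\Ha}$. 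Since the bounds are uniform in $x_0$, $\norm{\Delta_k f_e}_\infty \lesssim \bigl(2^{-(\alpha - 1/2)|k|} + 2^{-(N-1)|k|}\bigr)\norm{f}_{\Ha} \lesssim 2^{-(\alpha - 1/2)|k|}\norm{f}_{\Ha}$ for $N$ large, whence $\norm{f_e}_{\Bes^\beta_{\infty,1}} \lesssim \norm{f}_{\Ha}\sum_k 2^{(\beta - \alpha + 1/2)|k|} \lesssim \norm{f}_{\Ha}$ because $\beta < \alpha - \frac12$. The main obstacle is precisely this last argument — converting local Sobolev estimates into a global Besov estimate with exactly the index loss $\frac12$ coming from the Sobolev--Besov embedding; the facts (a), (b) and all of (1), (2), (4) and the first two inclusions in (3) are then comparatively routine.
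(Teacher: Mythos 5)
The paper offers no proof of this lemma: it defers to \cite[Propositions 4.8, 4.9, Remark 4.16]{Kr} (Kriegler's thesis), so there is nothing in this document to compare your argument against line by line. On its own merits, your proof looks sound and is organized in the natural way dictated by the definitions, namely transporting everything to the logarithmic variable $f\mapsto f_e$ so that $\Ma$, $\Sea$, $\Ha$ become $\Bes^\alpha_{\infty,1}(\R)$, $\Soatwo(\R)$ and $\{g:\sup_n\|\equi_n g\|_{\Soatwo}<\infty\}$, and then arguing with translation invariance and finite-overlap of the partition $(\equi_n)$. Statements (1), (2), (4) fall out cleanly from this. For the embeddings in (3): the Cauchy estimate plus $C^m_b\hookrightarrow\Bes^\gamma_{\infty,1}$ handles the first arrow; the Littlewood--Paley/Bernstein/interpolation estimate $\|\equi\Delta_j f_e\|_{\Soatwo}\lesssim 2^{\alpha|j|}\|\Delta_j f_e\|_\infty$ (summed over $j$, using $\alpha<\gamma$) handles the second; and your local/tail decomposition at a frozen point $x_0$, combined with $\Soatwo\hookrightarrow\Bes^{\alpha-1/2}_{\infty,\infty}$ and the rapid decay of $\check\Fdyad_k$, does give the decisive pointwise bound $\|\Delta_k f_e\|_\infty\lesssim 2^{-(\alpha-\frac12)|k|}\|f\|_{\Ha}$ and hence $\Ha\hookrightarrow\Mih^\beta$ for $\alpha>\beta+\frac12$. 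I found no gap; the only items left slightly implicit are the completeness of $\Ha$ (which you acknowledge as routine patching) and the verification that the local/tail bounds are uniform in $x_0$, both of which are straightforward. Since the only real loss of smoothness occurs at the step $\Soatwo\hookrightarrow\Bes^{\alpha-\frac12}_{\infty,\infty}$, you have also identified exactly why the index drop in $\Ha\hookrightarrow\Mih^\beta$ is $\tfrac12$.
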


\begin{rem}\label{Rem Classical Mih Hor}
The name ``H\"ormander class'' is justified by the following facts.
The classical H\"{o}rmander condition with a parameter $\alpha_1 \in \N$ reads as follows \cite[(7.9.8)]{Hoa}:
\begin{equation}\label{Equ Classical Hoermander condition}
\sum_{k = 0}^{\alpha_1} \sup_{R > 0} \int_{R/2}^{2R} |R^k f^{(k)}(t)|^2 dt/R < \infty.
\end{equation}
Furthermore, consider the following condition for some $\alpha > \frac12:$
\begin{equation}\label{Equ Modern Hoermander condition}
\sup_{t > 0} \| \psi f(t\cdot) \|_{\Soa} < \infty,
\end{equation}
where $\psi$ is a fixed function in $C^\infty_c(\R_+) \backslash \{ 0 \}.$
This condition appears in several articles on H\"ormander spectral multiplier theorems, we refer to \cite{DuOS} for an overview.
One easily checks that \eqref{Equ Modern Hoermander condition} does not depend on the particular choice of $\psi$ (see also \cite[p. 445]{DuOS}).
\end{rem}

By the following lemma which is proved in \cite[Proposition 4.11]{Kr}, the norm $\|\cdot\|_{\Ha}$ expresses condition \eqref{Equ Modern Hoermander condition}
and generalizes the classical H\"{o}rmander condition \eqref{Equ Classical Hoermander condition}.

\begin{lem}\label{Lem Classical and modern Hoermander condition}
Let $f \in L^1_{\text{loc}}(\R_+),$ $\alpha_1 \in \N$ and $\alpha > \frac12.$
Consider the conditions
\begin{enumerate}
\item $f$ satisfies \eqref{Equ Classical Hoermander condition},
\item $f$ satisfies \eqref{Equ Modern Hoermander condition},
\item $\|f\|_{\Ha} < \infty.$
\end{enumerate}
Then $(1) \Rightarrow (2)$ if $\alpha_1 \geq \alpha$ and $(2) \Rightarrow (1)$ if $\alpha \geq \alpha_1.$
Further, $(2) \Leftrightarrow (3).$
\end{lem}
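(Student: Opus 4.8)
The plan is to move all three conditions onto the single function $f_e(z) = f(e^z)$ on the line, where each becomes a \emph{uniformly local Sobolev bound}, and then to compare these bounds across the orders $\alpha$ and $\alpha_1$. First I would record the chain rule (Fa\`{a} di Bruno's formula) $f_e^{(k)}(z) = \sum_{j=1}^k c_{k,j}\, e^{jz} f^{(j)}(e^z)$ with $c_{k,k} = 1$; this triangular system can be inverted, so $t^k f^{(k)}(t) = e^{kz} f^{(k)}(e^z)$ (at $t = e^z$) is a fixed linear combination of $f_e^{(1)}(z), \dots, f_e^{(k)}(z)$, and conversely. Substituting $t = Ru$ and then $u = e^v$ in the integral of \eqref{Equ Classical Hoermander condition} only multiplies $dt/R$ and the weights $u^{-k}$ by factors bounded above and below on the block $u \in [\frac12,2]$, so \eqref{Equ Classical Hoermander condition} with $\alpha_1 \in \N$ is equivalent to $\sup_{s \in \R}\|f_e\|_{W^{\alpha_1}_2(I_s)} < \infty$, where $I_s = (s - \log 2,\, s + \log 2)$ (the finite sums $\sum_k \sup_R$ and $\sup_R \sum_k$ differ only by the factor $\alpha_1+1$); such a uniformly local condition does not feel the length of the window, a point used below.

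Next I would rewrite \eqref{Equ Modern Hoermander condition} and condition (3). Condition \eqref{Equ Modern Hoermander condition} is invariant under dilations $f \mapsto f(s\cdot)$, and the fixed smooth diffeomorphism $z \mapsto e^z$ carries the fixed bounded interval $\supp(\psi_e)$ onto $\supp\psi$, so composing with it gives $\|\psi\, f(t\cdot)\|_{\Soa} \cong \|\psi_e(\cdot)\, f_e(\cdot + \log t)\|_{W^\alpha_2(\R)}$ with constants depending only on $\psi$; by translation invariance of $W^\alpha_2(\R)$, condition \eqref{Equ Modern Hoermander condition} is then equivalent to $\sup_{s \in \R}\|\psi_e(\cdot - s)\, f_e\|_{W^\alpha_2(\R)} < \infty$, whereas condition (3) is by definition $\sup_{n \in \Z}\|\equi(\cdot - n)\, f_e\|_{W^\alpha_2(\R)} < \infty$. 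A covering argument --- decomposing an arbitrary bump as a finite sum $\chi = \sum_n \chi\,\equi(\cdot - n)$ and using that multiplication by a fixed $C^\infty_c$ function is bounded on $W^\alpha_2(\R)$ (a Young estimate on the Fourier side, with Peetre's inequality $\langle\xi\rangle^\alpha \lesssim \langle\xi - \eta\rangle^{|\alpha|}\langle\eta\rangle^\alpha$ and the fact that $\hat\chi$ is Schwartz) --- shows that the continuous-translate and the integer-translate versions coincide and are independent of the chosen bump; this gives the independence of \eqref{Equ Modern Hoermander condition} of $\psi$ noted in Remark \ref{Rem Classical Mih Hor}, and in particular $(2) \Leftrightarrow (3)$.

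It then remains to compare the integer-order bound obtained above from \eqref{Equ Classical Hoermander condition} with the order-$\alpha$ bound obtained from \eqref{Equ Modern Hoermander condition}. On the Fourier side $W^\beta_2(\R) \hookrightarrow W^\gamma_2(\R)$ whenever $\beta \ge \gamma$, and the multiplication-by-a-bump lemma (Leibniz at integer order, the Young/Peetre estimate at fractional order) together with the window-insensitivity above let one pass between the hard-cutoff form $\|f_e\|_{W^k_2(I_s)}$ and the smooth-cutoff form $\sup_n\|\equi(\cdot - n)\, f_e\|_{W^k_2(\R)}$ of a uniformly local bound of order $k$. Hence, if $\alpha_1 \ge \alpha$, then for a bump $\chi$ supported in one window $\|\chi(\cdot - s)\, f_e\|_{W^\alpha_2(\R)} \le \|\chi(\cdot - s)\, f_e\|_{W^{\alpha_1}_2(\R)} \lesssim_\chi \|f_e\|_{W^{\alpha_1}_2(I_s)}$, giving $(1)\Rightarrow(2)$; and if $\alpha \ge \alpha_1$, then summing the boundedly many partition blocks meeting $I_s$ gives $\|f_e\|_{W^{\alpha_1}_2(I_s)} \lesssim \sup_n\|\equi(\cdot - n)\, f_e\|_{W^{\alpha_1}_2(\R)} \le \sup_n\|\equi(\cdot - n)\, f_e\|_{W^\alpha_2(\R)}$, giving $(2)\Rightarrow(1)$. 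The only step that is not pure bookkeeping --- and the one I expect to be the genuine obstacle --- is the composition estimate with $z \mapsto e^z$ at \emph{non-integer} order $\alpha$: there the chain rule is unavailable, and one must invoke the invariance of Bessel-potential spaces under smooth diffeomorphisms with bounded derivatives (equivalently, a direct estimate of the Gagliardo seminorm on the compact window, or interpolation between the integer orders $\lfloor\alpha\rfloor$ and $\lceil\alpha\rceil$).
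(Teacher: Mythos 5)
Your argument is correct, and it takes the route that the definitions essentially force: after the substitution $z = \log t$ all three conditions become uniformly local Sobolev bounds on $f_e$ (a hard‑cutoff integer-order bound on windows $I_s$ for \eqref{Equ Classical Hoermander condition}, a continuous-translate smooth-cutoff bound of order $\alpha$ for \eqref{Equ Modern Hoermander condition}, and the integer-translate version of the latter for $\|f\|_{\Ha} < \infty$), after which the comparison is a matter of standard localization: the Fa\`a di Bruno triangular system for $f_e^{(k)}$, multiplier estimates for $C^\infty_c$ bumps on $W^\alpha_2$, translation invariance, and the monotonicity $W^{\beta}_2 \hookrightarrow W^{\gamma}_2$ for $\beta\ge\gamma$. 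The paper itself defers the proof to \cite[Proposition 4.11]{Kr}, so there is no proof text here to compare against line by line, but the one genuine technical point you isolate --- that at non-integer $\alpha$ the chain rule is unavailable and one must invoke diffeomorphism invariance of $W^\alpha_2$ on a fixed compact window (or interpolate $[W^{\lfloor\alpha\rfloor}_2, W^{\lceil\alpha\rceil}_2]_\theta$, noting that $g\mapsto g\circ\Phi$ and multiplication by a bump are linear) --- is exactly the right one, and the uniformity in the translation parameter $s=\log t$ is automatic since the composition with $e^{(\cdot)}$ is always performed on the single fixed window $\supp\psi_e$. I see no gap.
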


Let $E$ be a Sobolev space as in Notation \ref{Nota Classical function spaces}.
In this subsection we define an $E$ functional calculus for a $0$-sectorial operator $A$
by tracing it back to the holomorphic functional calculus from Subsection \ref{Subsec A B}.
The following lemma which is proved in \cite[Lemma 4.15]{Kr} will be useful.

\begin{lem}\label{Lem HI dense in diverse spaces}
Let $\beta > \frac12.$
Then $\bigcap_{0 < \omega < \pi}\HI(\Sigma_\omega) \cap \Sobexp^\beta_2$ is dense in $\Sobexp^\beta_2.$
More precisely, if $f \in \Sobexp^\beta_2,$ $\psi \in C^\infty_c$ such that $\psi(t) =1$ for $|t| \leq 1$ and $\psi_n = \psi(2^{-n}(\cdot)),$
then
\[ (f_e \ast \check\psi_n) \circ \log \in \bigcap_{0 < \omega < \pi} \HI(\Sigma_\omega) \cap \Sobexp^\beta_2 \text{ and }(f_e \ast \check\psi_n)\circ\log \to f\text{ in }\Sobexp^\beta_2. \]
Thus if $f$ happens to belong to several spaces $\Sobexp^\beta_2$ for different $\beta$ as above,
then it can be simultaneously approximated by a holomorphic sequence in any of these spaces.
\end{lem}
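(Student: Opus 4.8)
\emph{Proof plan.} The assertion is really about the two concrete function spaces $\Bes^\alpha_{\infty,1}(\R)$ and $\Sobolev^\beta_2(\R)$ on the line, carried over to $\Ma$ and $\Sobexp^\beta_2$ by the isometry $f \mapsto f_e$; I would first pass to the line. Write $Y$ for the relevant model space: $Y = \Bes^\alpha_{\infty,1}$ when $E = \Ma$, and $Y = \Sobolev^\beta_2$ when $E = \Sobexp^\beta_2$. Since $\|g\|_E = \|g_e\|_Y$ by definition and $(u \circ \log)_e = u$ for every $u : \R \to \C$, it suffices to show that for $f_e \in Y$ the functions $h_n := f_e \ast \check\psi_n$ — with $\psi_n = \psi(2^{-n}\cdot)$, $\psi \in C^\infty_c$, $\psi \equiv 1$ on $[-1,1]$, $\supp\psi \subset [-R,R]$ — satisfy $h_n \to f_e$ in $Y$ and $h_n \circ \log \in \bigcap_{0 < \omega < \pi}\HI(\Sigma_\omega)$. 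Two facts will be used throughout: in both cases $Y \hookrightarrow C^0_b(\R)$ (by the very definition of $\Bes^\alpha_{\infty,1}$, resp. by Sobolev embedding since $\beta > \frac12$), so $f_e$ is bounded; and $\widehat{h_n} = \psi_n \widehat{f_e}$ is supported in $[-2^n R, 2^n R]$, so $h_n$ is band-limited.

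The holomorphy is the heart of the proof, and I would extract it from the Paley--Wiener theorem: $\check\psi_n$ extends to an entire function that decays faster than any polynomial along each horizontal line and satisfies $|\check\psi_n(x + iy)| \le C_{n,N}(1 + |x|)^{-N} e^{2^n R |y|}$ for all $N$. Consequently $\widetilde{h_n}(w) := \int_\R f_e(t)\,\check\psi_n(w - t)\,dt$ converges absolutely and locally uniformly on every horizontal strip, provides an entire extension of $h_n$, and obeys $|\widetilde{h_n}(x + iy)| \le C_n \|f_e\|_\infty\, e^{2^n R |y|}$. Since $\log$ maps $\Sigma_\omega$ biholomorphically onto the strip $\{w : |\Im w| < \omega\}$, this gives $h_n \circ \log \in \HI(\Sigma_\omega)$ for every $\omega \in (0,\pi)$, as needed.

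For the remaining points I would use that $u \mapsto u \ast \check\psi_n$ is Fourier multiplication by $\psi_n$, that $\|\check\psi_n\|_{L^1} = \|\check\psi\|_{L^1}$ by a dilation, and that $\psi_n \equiv 1$ on $[-2^n, 2^n]$. Membership $h_n \in Y$ follows from Young's inequality in the translation-invariant space $Y$: $\|h_n\|_Y \le \|\check\psi\|_{L^1}\|f_e\|_Y < \infty$. For $h_n \to f_e$: in the Sobolev case, $\|h_n - f_e\|_{\Sobolev^\beta_2}^2 = \int_\R |\psi_n(\xi) - 1|^2\,|\widehat{f_e}(\xi)|^2\,(1 + |\xi|)^{2\beta}\, d\xi \to 0$ by dominated convergence. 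In the Besov case, decompose $h_n - f_e$ with the dyadic partition of unity $(\Fdyad_k)_k$ on $\R$: since $\psi_n \equiv 1$ on $\supp\Fdyad_k$ whenever $|k| \le n$, the blocks $(h_n - f_e)\ast\check\Fdyad_k$ vanish identically for $|k| \le n$, while for $|k| > n$ a standard fattening of $\Fdyad_k$ (to $\Fdyad_{k-1} + \Fdyad_k + \Fdyad_{k+1}$) together with the uniform bounds $\|\check\psi_n\|_{L^1} = \|\check\psi\|_{L^1}$ and $\sup_k \|\check\Fdyad_k\|_{L^1} < \infty$ controls $\|(h_n - f_e)\ast\check\Fdyad_k\|_\infty$ by a constant times a neighbouring Littlewood--Paley block of $f_e$; summing, $\|h_n - f_e\|_{\Bes^\alpha_{\infty,1}}$ is dominated by a vanishing tail of a series bounded by a constant times $\|f_e\|_{\Bes^\alpha_{\infty,1}} < \infty$. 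Combining the two steps, $h_n \circ \log \in \bigcap_{0 < \omega < \pi}\HI(\Sigma_\omega) \cap E$ and $\|h_n \circ \log - f\|_E = \|h_n - f_e\|_Y \to 0$, which is exactly the density statement.

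The only step that is not a routine verification is the construction of the bounded holomorphic extension $\widetilde{h_n}$ on the strips $\{|\Im w| < \omega\}$, $\omega < \pi$: it rests on the sharp Paley--Wiener decay of $\check\psi_n$ in the imaginary direction. The rest is standard — Young's inequality in Besov and Bessel-potential spaces for membership, and two elementary convergence arguments (dominated convergence on the Fourier side for $\Sobolev^\beta_2$, and the observation that $\psi_n$ equals $1$ on the growing interval $[-2^n, 2^n]$, which forces all but boundedly many Littlewood--Paley blocks of $h_n - f_e$ to be exactly zero, for $\Bes^\alpha_{\infty,1}$).
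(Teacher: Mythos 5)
The paper does not reproduce a proof of this lemma; it cites \cite[Lemma 4.15]{Kr}. Your argument, however, is correct and is the natural one: you pass to the line via $f\mapsto f_e$, note that $\widehat{h_n}=\psi_n\widehat{f_e}$ is band-limited, invoke Paley--Wiener to get a holomorphic extension of $h_n$ to each horizontal strip with the bound $\|f_e\|_\infty\int_\R|\check\psi_n(s+iy)|\,ds$ (finite for each $n$, uniformly in $|y|<\omega<\pi$), and then transport it by $\log\colon\Sigma_\omega\to\Str_\omega$. The convergence step is handled correctly in both models: by Plancherel and dominated convergence for $\Sobolev^\beta_2$, and by the observation that $(h_n-f_e)\ast\check\Fdyad_k=0$ for $|k|\le n$ together with the uniform $L^1$-bounds on $\check\psi_n$ and $\check\Fdyad_k$ for $\Bes^\alpha_{\infty,1}$, leaving a vanishing tail of the convergent series $\sum_k 2^{|k|\alpha}\|f_e\ast\check\Fdyad_k\|_\infty$. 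The simultaneous-approximation claim follows automatically, as you note, because the approximating sequence is the same for every admissible $E$. I see no gap.

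One small remark worth spelling out if this were to be written in full: the holomorphy of $\widetilde h_n(w)=\int_\R f_e(t)\,\check\psi_n(w-t)\,dt$ on each strip requires a Morera/dominated-convergence argument using the locally uniform Paley--Wiener bound; and the boundedness on $\Str_\omega$ should be stated as $\sup_{|y|<\omega}\|\check\psi_n(\cdot+iy)\|_{L^1(\R)}<\infty$, which is exactly what the Paley--Wiener estimate $|\check\psi_n(x+iy)|\le C_{n,N}(1+|x|)^{-N}e^{2^nR|y|}$ gives. These are routine but are the only places where something beyond a one-line verification is needed.
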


Lemma \ref{Lem HI dense in diverse spaces} enables to base the $\Sobexp^\beta_2$ calculus on the $\HI$ calculus.

\begin{defi}\label{Def Line calculi}
Let $A$ be a 0-sectorial operator and $\beta > \frac12.$
We say that $A$ has a (bounded) $\Sobexp^\beta_2$ calculus if there exists a constant $C > 0$ such that
\[
\|f(A)\| \leq C \|f\|_{\Sobexp^\beta_2}\quad (f \in \bigcap_{0 < \omega < \pi} \HI(\Sigma_\omega) \cap \Sobexp^\beta_2).
\]
In this case, by the just proved density of $\bigcap_{0 < \omega < \pi} \HI(\Sigma_\omega) \cap \Sobexp^\beta_2$ in $\Sobexp^\beta_2,$ the algebra homomorphism $u : \bigcap_{0 < \omega < \pi} \HI(\Sigma_\omega) \cap \Sobexp^\beta_2 \to B(X)$ given by $u(f) = f(A)$ can be continuously extended in a unique way to a bounded algebra homomorphism
\[u: \Sobexp^\beta_2 \to B(X),\,f \mapsto u(f).\]
We write again $f(A)=u(f)$ for any $f \in \Sobexp^\beta_2.$
Assume that $\beta_1,\beta_2 > \frac12$ and that $A$ has a $\Sobexp^{\beta_1}_2$ calculus and a $\Sobexp^{\beta_2}_2$ calculus.
Then for $f \in \Sobexp^{\beta_1}_2 \cap \Sobexp^{\beta_2}_2,\, f(A)$ is defined twice by the above.
However, the second part of Lemma \ref{Lem HI dense in diverse spaces} shows that these definitions coincide.
\end{defi}

The following lemma gives a representation formula of the $\Sobexp^\alpha_2$ calculus in terms of the $C_0$-group $A^{it}.$
It can be proved with the Cauchy integral formula \eqref{Equ Cauchy Integral Formula} in combination with the Fourier inversion formula
\cite[Proposition 4.22]{Kr}.
Here and below we use the short hand notation $\langle t \rangle = \sqrt{ 1 + t^2 }.$

\begin{lem}\label{Lem Sobolev calculus}
Let $X$ be a Banach space with dual $X'.$
Let $\alpha > \frac12,$ so that $\Soa$ is a Banach algebra.
Let $A$ be a $0$-sectorial operator with bounded imaginary powers $U(t) = A^{it}.$
\begin{enumerate}
\item
Assume that for some $C > 0$ and all $x \in X,\,x'\in X'$
\begin{equation}\label{Equ group weak L2}
 \| \tma \spr{U(t)x}{x'} \|_{L^2(\R)} = \left( \int_\R |\tma \spr{U(t)x}{x'}|^2 dt \right)^{1/2} \leq C \| x \| \, \| x' \|.
\end{equation}
Then $A$ has a bounded $\Sea$ calculus.
Moreover, for any $f \in \Sea,$ $f(A)$ is given by
\begin{equation}\label{Equ Soa calculus formula}
 \spr{f(A) x}{x'} = \frac{1}{2\pi} \int_\R (f_e)\hat{\phantom{i}}(t) \spr{U(t)x}{x'} dt \quad (x \in X,\: x' \in X').
\end{equation}
The above integral exists as a strong integral if moreover $\| \tma U(t) x \|_{L^2(\R)} < \infty.$
\item Conversely, if $A$ has a $\Sea$ calculus, then \eqref{Equ group weak L2} holds.
\end{enumerate}
\end{lem}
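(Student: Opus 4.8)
The plan is to establish the representation formula \eqref{Equ Soa calculus formula} on a dense class of holomorphic functions, then upgrade it to a norm estimate via Cauchy--Schwarz and the hypothesis \eqref{Equ group weak L2}, and finally read off the converse by $L^2$-duality. \emph{Step 1 (the formula for holomorphic $f$).} By Lemma \ref{Lem HI dense in diverse spaces} together with a mollification I may restrict to $f\in\bigcap_{0<\omega<\pi}\HI(\Sigma_\omega)\cap\Sea$ with the additional property $\widehat{f_e}\in C_c^\infty(\R)$; then $f_e$ is holomorphic on the strip $|\Im z|<\pi$ and (by analytic continuation from $\R$) Schwartz on every horizontal line there, in particular bounded on each substrip $|\Im z|\le\omega<\pi$. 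Fix $x\in D(A)\cap R(A)$, on which $t\mapsto U(t)x=A^{it}x$ is a continuous $X$-valued function, and set $Tx=\frac1{2\pi}\int_\R\widehat{f_e}(t)\,U(t)x\,dt$, a genuine Bochner integral over the compact support of $\widehat{f_e}$. I would then apply the bounded operator $\rho(A)$, $\rho(\lambda)=\lambda(1+\lambda)^{-2}$: using the multiplicativity of the extended calculus (Lemma \ref{Lem Hol}), the Cauchy integral \eqref{Equ Cauchy Integral Formula} for the genuinely decaying functions $\rho(\lambda)\lambda^{it}$ (the decay being uniform for $t$ in a compact set), Fubini, and finally the Fourier-inversion identity $\frac1{2\pi}\int_\R\widehat{f_e}(t)\lambda^{it}\,dt=f(\lambda)$ for $\lambda$ on the contour $\Gamma=\partial\Sigma_\sigma$ (legitimate since $f_e$ is holomorphic on $|\Im z|<\pi$ and Schwartz on horizontal lines, so the Fourier-inversion contour may be shifted to $\R+i\sigma$), I arrive at $\rho(A)Tx=\frac1{2\pi i}\int_\Gamma f(\lambda)\rho(\lambda)(\lambda-A)^{-1}x\,d\lambda=(f\rho)(A)x=\rho(A)f(A)x$. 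Injectivity of $\rho(A)$ gives $Tx=f(A)x$, i.e. \eqref{Equ Soa calculus formula} tested against $x'\in X'$, for $f$ in this class and $x\in D(A)\cap R(A)$.

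\emph{Step 2 (boundedness; part (1)).} Since $\alpha>\tfrac12$ one has $\tma\in L^2(\R)$, so for any $f\in\Sea$, by Plancherel and $(1+|t|)^\alpha\cong\ta$,
\[\|\widehat{f_e}\|_{L^1(\R)}\le\|\tma\|_{L^2}\,\|\ta\,\widehat{f_e}\|_{L^2}\lesssim\|f\|_{\Sea},\]
so the integral in \eqref{Equ Soa calculus formula} converges absolutely. Splitting $\widehat{f_e}(t)\spr{U(t)x}{x'}=\bigl(\ta\,\widehat{f_e}(t)\bigr)\bigl(\tma\spr{U(t)x}{x'}\bigr)$ and combining Cauchy--Schwarz with \eqref{Equ group weak L2} yields $|\spr{f(A)x}{x'}|\lesssim\|f\|_{\Sea}\|x\|\|x'\|$ for $f$ in the class of Step 1 and $x\in D(A)\cap R(A)$; by density this is the bounded $\Sea$ calculus of Definition \ref{Def Line calculi}, and \eqref{Equ Soa calculus formula} then extends to all $f\in\Sea$ and $x\in X$ because both sides are continuous in $(f,x)$. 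The strong-integral statement follows from the same estimate read in $L^2(\R;X)$: when $\|\tma U(\cdot)x\|_{L^2(\R)}<\infty$, the integrand factors as $\ta\,\widehat{f_e}(t)\in L^2(\R)$ times $\tma U(t)x\in L^2(\R;X)$, hence lies in $L^1(\R;X)$.

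\emph{Step 3 (converse; part (2)).} Now assume $A$ has a bounded $\Sea$ calculus, and fix $x\in D(A)\cap R(A)$, so that $t\mapsto\tma\spr{U(t)x}{x'}$ is continuous, hence locally integrable. For $\ell\in C_c^\infty(\R)$ let $g$ be the function in the class of Step 1 with $\widehat{g_e}(t)=\tma\ell(t)$; then $\|g\|_{\Sea}=\|(1+|t|)^\alpha\tma\ell\|_{L^2}\cong\|\ell\|_{L^2}$, so by \eqref{Equ Soa calculus formula} and boundedness of the calculus,
\[\Bigl|\int_\R\ell(t)\,\tma\spr{U(t)x}{x'}\,dt\Bigr|=2\pi\bigl|\spr{g(A)x}{x'}\bigr|\le2\pi\|g(A)\|\,\|x\|\,\|x'\|\lesssim\|\ell\|_{L^2}\,\|x\|\,\|x'\|.\]
Since $C_c^\infty(\R)$ is dense in $L^2(\R)$, this gives $\tma\spr{U(\cdot)x}{x'}\in L^2(\R)$ with norm $\lesssim\|x\|\|x'\|$, and a density argument in $x$ produces \eqref{Equ group weak L2}.

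\emph{Main obstacle.} The delicate part is Step 1: isolating the dense class of $f$ with $\widehat{f_e}$ of compact support and pushing the Cauchy integral \eqref{Equ Cauchy Integral Formula} rigorously through Fubini and the contour-shifted Fourier inversion, while verifying that $D(A)\cap R(A)$ lies in the domains ($D(A^{it})$, $R(\rho(A))$, $D(f(A))$) required to invoke the multiplicativity of Lemma \ref{Lem Hol}. Once \eqref{Equ Soa calculus formula} is available on this class, Steps 2 and 3 are routine Cauchy--Schwarz and $L^2$-duality arguments.
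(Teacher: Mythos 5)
Your proof is correct and follows essentially the approach the paper itself indicates for this lemma (which it does not prove in detail but attributes to ``the Cauchy integral formula in combination with the Fourier inversion formula''): Step~1 is exactly that combination, regularized by $\rho(A)$ on a dense class of $f$ with compactly supported $\widehat{f_e}$, and Steps~2--3 are the routine Cauchy--Schwarz and $L^2$-duality upgrades. The domain bookkeeping you flag in the ``Main obstacle'' paragraph (working on $D(A)\cap R(A)$ resp.\ the calculus core $D_A$, and using that $U(t)=A^{it}$ is a bounded $C_0$-group as the hypothesis implies) is indeed the only technical content the paper delegates to the cited thesis, and your outline handles it in the expected way.
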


\begin{proof}
(1): For $f \in \Sea,x \in X,$ and $x' \in X',$ set
\[\spr{\Phi(f)x}{x'} = \frac{1}{2\pi} \int_\R (f_e)\hat{\phantom{i}}(t) \spr{U(t)x}{x'} dt.\]
We have
\begin{align}
|\spr{\Phi(f)x}{x'}| & \lesssim \int_\R | (f_e)\hat{\phantom{i}}(t) \spr{U(t)x}{x'}| dt = \int_\R |\ta  (f_e)\hat{\phantom{i}}(t) \tma \spr{U(t)x}{x'}| dt
\nonumber \\
& \leq \|\ta  (f_e)\hat{\phantom{i}}(t)\|_{2} \| \tma \spr{U(t)x}{x'} \|_2  \overset{\eqref{Equ group weak L2}}{\lesssim} \|\ta  (f_e)\hat{\phantom{i}}(t)\|_{2}\|x\|\,\|x'\|
\nonumber \\
& \leq \|f\|_{\Sea}\|x\|\,\|x'\|,
\label{Equ 2 Proof Soa calculus}
\end{align}
so that $\Phi$ defines a bounded operator $\Sea \to B(X,X'').$
Let
\[ K = \bigcap_{\omega > 0} \{ f \in \HI(\Sigma_\omega):\: \exists\,C > 0 \: \forall z \in \Sigma_\omega:\: |f_e(z)| \leq C ( 1 + |\Re z| )^{-2}\text{ and } (f_e)\hat{\phantom{i}}\text{ has comp. supp.}\}\]
We have that $K$ is a dense subset of $\Sea.$

Indeed, by the Cauchy integral formula, $K \subset \Sea.$
We now approximate a given $f \in \Sea$ by elements of $K.$
Since $C^\infty_c(0,\infty)$ is dense in $\Sea,$ we can assume $f\in C^\infty_c(0,\infty).$
Let $\psi$ and $\psi_n$ be as in the Density Lemma \ref{Lem HI dense in diverse spaces} and put $f_n = (f_e \ast \check\psi_n) \circ \log.$
Then $(f_{n,e})\hat{\phantom{i}} = (f_e)\hat{\phantom{i}} \psi_n$ has compact support.
Further, the estimate $|f_{n,e}(z)| \leq C (1+|\Re z|)^{-2}$ for $z$ in a given strip $\{ \lambda \in \C: \: |\Im \lambda| < \omega \}$ follows from the Paley-Wiener theorem
and the fact that $(f_{n,e})\hat{\phantom{i}} = (f_e)\hat{\phantom{i}} \psi_n \in C^\infty_c(\R).$
Thus, $(f_e \ast \check\psi_n) \circ \log \in K,$ and $K$ is dense in $\Sea.$

Assume for a moment that
\begin{equation}\label{Equ Proof Soa calculus}
 \Phi(f) = f(A) \quad (f \in K).
\end{equation}

Then by \eqref{Equ 2 Proof Soa calculus}, there exists $C > 0$ such that for any $f \in K,\,\|f(A)\| \lesssim \|f\|_{\Sea}.$
By the density of $K$ in $\Sea,$ $A$ has a $\Sea$ calculus.
Then for any $f \in \Sea$ and $(f_n)_n$ a sequence in $K$ such that $f = \lim_n f_n,$
\[f(A) = \lim_n f_n(A) = \lim_n \Phi(f_n) = \Phi(f),\]
where limits are in $B(X,X'').$
Thus, $f(A) = \Phi(f)$ for arbitrary $f,$ and \eqref{Equ Soa calculus formula} follows.

We show \eqref{Equ Proof Soa calculus}.
Let $f \in K.$
Denote $iB$ the generator of the group $U(t).$
We argue as in \cite[Lemma 2.2]{Haas}.
Choose some $\omega > 0.$
According to the representation formula
\begin{equation}\label{Equ Resolvent Laplace Formula}
R(\lambda,B)x = -\sgn(\Im \lambda)i\int_0^\infty e^{i \sgn(\Im \lambda)\lambda t} U(-\sgn(\Im \lambda) t)x dt,
\end{equation}
we have by the composition rule \cite[Theorem 4.2.4]{Haasa}
\begin{align}
\spr{f(A)x}{x'} & = \frac{1}{2\pi i} \int_\R f_e(s - i\omega) \spr{R(s-i\omega,B)x}{x'} ds
-\frac{1}{2\pi i}\int_\R f_e(s+i\omega) \spr{R(s+i\omega,B)x}{x'} ds
\nonumber \\
& = \frac{1}{2\pi i}\left[ \int_\R f_e(s-i\omega) \cdot i\int_0^\infty e^{-i(s-i\omega) t}\spr{U(t)x}{x'} dt ds\right.
\nonumber \\
& \left.+ \int_\R f_e(s+i\omega) \cdot i \int_{-\infty}^0 e^{-i(s + i\omega) t} \spr{U(t)x}{x'} dt ds\right]
\nonumber \\
& \overset{(*)}{=} \frac{1}{2\pi} \left[ \int_0^\infty \left( \int_\R f_e(s-i\omega) e^{-i(s-i\omega)t} ds\right) \spr{U(t)x}{x'} dt\right.
\nonumber \\
& \left.+ \int_{-\infty}^0 \left( \int_\R f_e(s+i\omega) e^{-i(s+i\omega)t} ds \right) \spr{U(t)x}{x'} dt \right]
\nonumber \\
& \overset{(**)}{=} \frac{1}{2\pi} \left[ \int_0^\infty \int_\R f_e(s) e^{-ist} ds \spr{U(t)x}{x'} dt
+ \int_{-\infty}^0 \int_\R f_e(s) e^{-ist} ds \spr{U(t)x}{x'} dt \right]
\nonumber \\
& = \frac{1}{2\pi} \int_\R (f_e)\hat{\phantom{i}}(t) \spr{U(t)x}{x'} dt
\nonumber \\
& = \spr{\Phi(f)x}{x'}.
\label{Equ 3 Proof Sobolev calculus}
\end{align}
As $f \in K$, we could apply Fubini's theorem in $(*)$ and shift the contour of the integral in $(**).$
Hence \eqref{Equ Proof Soa calculus} follows.

The last sentence of part (1) is now clear.\\

(2): If $A$ has a $\Sea$ calculus, then \eqref{Equ 3 Proof Sobolev calculus} still holds,
and thus
\[ \spr{f(A)x}{x'} = \frac{1}{2\pi}\int_\R (f_e)\hat{\phantom{i}}(t) \spr{U(t)x}{x'} dt \quad (f \in K).\]
Therefore, by the density of $K$ in $\Sea,$
\begin{align}
\| \tma \spr{U(t)x}{x'} \|_2  & = {2\pi} \sup\{|\spr{f(A)x}{x'}|:\:f \in K,\,\|\ta (f_e)\hat{\phantom{i}}(t)\|_2\leq 1\}
\nonumber \\
& \lesssim \|x\|\,\|x'\|.
\nonumber
\end{align}
\end{proof}

In order to state some of our main results for a general class of operators $A,$ we introduce now an auxiliary functional calculus which allows to define $f(A)$ as a closed, not necessarily bounded operator for $f_e$ in (a subclass) of $\Soaloc.$
Let $A$ be a $0$-sectorial operator on some Banach space $X.$
For $\theta > 0,$ we let $D(\theta) = D(A^\theta) \cap R(A^\theta),$ which is a Banach space with the norm $\|x\|_{D(\theta)} = \| \rho^{-\theta}(A)x \|_X.$
$D(\theta)$ forms a decreasing scale of spaces when $\theta$ grows.
Recall that $\rho(\lambda) = \lambda ( 1 + \lambda)^{-2}$ and its powers $\rho^\theta$ belong to $\HI_0(\Sigma_\omega)$ for any $\omega \in (0,\pi),$ and $R(\rho^\theta(A)) = D(A^\theta) \cap R(A^\theta).$
Note that $D(\theta)$ is dense in $X$ (see \cite[9.4 Proposition (c)]{KuWe}).
Assume that $A$ satisfies one of the following assumptions for some $\theta > 0$ and $\beta > 0.$
\begin{align}
 \int_\R |\langle t \rangle^{-\beta} \langle A^{it}x,x' \rangle|^2 dt & \leq C \|x\|_{D(\theta)}^2 \|x'\|_{X'}^2, \label{equ 1 assumptions auxiliary calculus} \\
 \int_\R |\langle s \rangle^{-\beta} \langle e^{isA} x, x' \rangle|^2 ds & \leq C \|x\|_{D(\theta)}^2 \|x'\|_{X'}^2, \label{equ 2 assumptions auxiliary calculus} \\
\int_0^\infty | \langle \exp(-e^{i \omega} t A)x, x' \rangle|^2 dt & \leq C (\frac{\pi}{2} - |\omega|)^{-2\beta} \|x\|_{D(\theta)}^2 \|x'\|_{X'}^2\text{ for }|\omega| < \frac{\pi}{2}, \label{equ 3 assumptions auxiliary calculus} \\
 \int_0^\infty | t^\gamma \langle R(e^{i\omega} t, A) x, x' \rangle |^2 \frac{dt}{t} & \leq C |\omega|^{-2 \beta} \|x\|_{D(\theta)}^2 \|x'\|_{X'}^2\text{ for some fixed }\gamma \in (0,1)\text{ and any }\omega \in (-\pi,\pi) \backslash \{ 0 \}. \label{equ 4 assumptions auxiliary calculus}
\end{align}
Here in \eqref{equ 2 assumptions auxiliary calculus}, $\langle e^{isA}x,x'\rangle$ is understood to be the limit of $\langle e^{(is-t)A}x,x' \rangle$ as $t \to 0+.$
If $A$ satisfies \eqref{equ 1 assumptions auxiliary calculus} with $\beta = \alpha > \frac12$, then one can show as in Lemma \ref{Lem Sobolev calculus} (1) that there is a bounded linear mapping
\[ \Phi_A : \Sea \to B(D(\theta),X),\: f \mapsto \Phi_A(f)\]
with $\langle \Phi_A(f)x, x' \rangle = \frac{1}{2\pi} \int_{\R} (f_e)\hat{\phantom{i}}(t) \langle A^{it}x, x' \rangle dt$ for $x \in D(\theta)$ and $x' \in X'.$
Moreover, $\Phi_A(f)x = f(A)x$ for $x \in D(\theta)$ and $f \in \HI(\Sigma_\omega) \cap \Sea,$ where the right hand side is defined by the holomorphic functional calculus from Lemma \ref{Lem Hol}.
Note that $\Phi_A(f)$ is a closeable operator on $X.$
Indeed, if $x_n \to 0$ in $X$ with $x_n \in D(\theta)$ and $\Phi_A(f)x_n \to y$ for some $y \in X,$ then $\Phi_A(f)\rho^\theta(A) = \rho^\theta(A) \Phi_A(f)$ is a bounded operator in $B(X),$ so that $\rho^\theta(A) \Phi_A(f) x_n$ converges to both $0$ and $\rho^\theta(A)y.$
By injectivity of $\rho^\theta(A),$ it follows that $y = 0,$ and thus, $\Phi_A$ is closeable.
Therefore we can denote without ambiguity its closure by $f(A)$.
We will show later in Proposition \ref{Prop assumptions auxiliary calculus} that the conditions \eqref{equ 2 assumptions auxiliary calculus}, \eqref{equ 3 assumptions auxiliary calculus} and \eqref{equ 4 assumptions auxiliary calculus} imply \eqref{equ 1 assumptions auxiliary calculus} for some $\alpha > \frac12$ and $\theta' > 0$ and thus also imply the auxiliary functional calculus $\Phi_A$ with the properties above.

\begin{lem}
\label{Lem auxiliary calculus multiplicative}
Let $A$ be a $0$-sectorial operator with auxiliary functional calculus $\Phi_A$ as above.
Let $\omega \in (0,\pi).$
\begin{enumerate}
\item If $f \in \Sea ,\: g \in \HI(\Sigma_\omega)$ and $x \in D(\theta),$ then $f(A) g(A)x = (fg)(A)x.$
\item If $f \in \Sea,$ $g \in \HI_0(\Sigma_\omega)$ and $x \in D(\theta),$ then $g(A) f(A)x = (gf)(A) x.$
\item If $f,g \in \Sea$ and $x \in D(\theta),$ then $g(A)x \in D(f(A))$ and $f(A)g(A)x = (fg)(A)x.$
\end{enumerate}
\end{lem}

\begin{proof}
(1) Let $f_n \in \Sea \cap \HI(\Sigma_\omega)$ with $f_n \to f$ in $\Sea.$
Then on the one hand, since $g(A)$ commutes with $\rho^\theta(A),$ and thus, $g(A)x$ belongs to $D(\theta),$ we have $f_n(A)g(A)x \to f(A) g(A) x.$
On the other hand, $f_n(A) g(A) x  = (f_ng)(A) x \to (fg)(A)x,$ since $f_ng \to fg$ in $\Sea.$\\

(2) Let again $f_n \in \Sea \cap \HI(\Sigma_\omega)$ with $f_n \to f$ in $\Sea.$
Then $f_n(A)x \to f(A)x,$ so $g(A)f_n(A) x \to g(A) f(A) x.$
On the other hand, $g(A) f_n(A) x = (g f_n)(A)x \to (gf)(A)x,$ since $gf_n \to gf$ in $\Sea.$\\

(3) 
We first show that if $x \in D(2\theta),$ then $g(A)x \in D(\theta)$ and $f(A)g(A)x = (fg)(A)x.$
Note that by (1) and (2), $g(A)$ commutes with $\rho^\theta(A),$ so that $g(A)$ maps $D(2\theta)$ into $D(\theta).$
This implies $g(A)x \in D(\theta).$
Moreover, we have for $x' \in X',$
\begin{align*}
\langle (fg)(A) x, x' \rangle & = \frac{1}{2 \pi} \int_{\R} (f_e g_e)\hat{\phantom{i}}(t) \langle A^{it} x, x' \rangle dt \\
& = \frac{1}{2\pi} \int_{\R} \frac{1}{2\pi} (f_e)\hat{\phantom{i}} \ast (g_e)\hat{\phantom{i}} (t) \langle A^{it} x, x' \rangle dt \\
& = \frac{1}{(2\pi)^2} \int_\R \int_\R (f_e)\hat{\phantom{i}}(s) (g_e)\hat{\phantom{i}}(t-s) \langle A^{i(t-s)} A^{is}x, x' \rangle dt ds \\
& = \frac{1}{(2 \pi)^2} \int_\R  (f_e)\hat{\phantom{i}}(s) \int_\R (g_e)\hat{\phantom{i}}(t)
\langle A^{it} A^{is}x, x' \rangle dt ds \\
& = \frac{1}{2\pi} \int_R (f_e)\hat{\phantom{i}}(s) \langle g(A) A^{is} x, x' \rangle ds \\
& = \frac{1}{2\pi} \int_R (f_e)\hat{\phantom{i}}(s) \langle A^{is} g(A) x, x' \rangle ds \\
& = \langle f(A) g(A) x , x' \rangle.
\end{align*}
Now for general $x \in D(\theta),$ we let $x_n \in D(2\theta)$ such that $x_n \to x$ in $D(\theta).$
Then $g(A)x_n \to g(A)x$ in $X.$
By the above, $f(A) g(A)x_n = (fg)(A)x_n,$ which converges to $(fg)(A)x$ in $X.$
By closedness of $f(A),$ this implies $g(A)x \in D(f(A))$ and $f(A)g(A)x = (fg)(A)x.$
\end{proof}

For a $0$-sectorial operator $A$ with auxiliary functional calculus $\Phi_A$ as above, we define the following subset $D_A$ of $X.$
Let $(\dyad_n)_{n \in \Z}$ be a dyadic partition of unity and $\theta$ be given as in \eqref{equ 1 assumptions auxiliary calculus}.
\begin{equation}
\label{Equ D}
D_A = \{\sum_{n = -N}^N \dyad_n(A) x:\: N \in \N,\: x \in D(\theta)\}.
\end{equation}
We call $D_A$ the calculus core of $A.$

\begin{lem}\label{Lem D_A dense}
Let $A$ and $D_A$ be as above.
Then $D_A$ is dense in $X.$
\end{lem}

\begin{proof}
For $n \in \Z,$ let $\phi_n(t) = \exp(-2^n t) - \exp(-2^{n+1}t).$
Then by a telescopic sum argument, $\sum_{n \in \Z} \phi_n(A) x = \lim_{t \to 0} e^{-tA}x - \lim_{t \to \infty} e^{-tA}x = x$ for any $x \in X,$ due to the property $N(A) = \{ 0 \}.$
It thus suffices to show that $\sum_{m \in \Z} \dyad_m(A) \phi_n(A)x$ converges to $\phi_n(A)x$ for $x \in D(2 \theta),$ since $\sum_{m = -N}^N \dyad_m(A) \phi_n(A)x$ belongs to $D_A,$ so that then $\phi_n(A)x \in \overline{D_A}$ and by the above $x \in \overline{D_A}.$
Thus, $D(2\theta) \subset \overline{D_A},$ and we conclude since $D(2\theta)$ is dense in $X.$
We note that by Lemma \ref{Lem auxiliary calculus multiplicative}, $\|\dyad_m(A)\phi_n(A)x\| = \|(\dyad_m \phi_n)(A)x\| \lesssim \|\dyad_m \phi_n\|_{\Sea} \lesssim 2^{-|n+m|}.$
Indeed, the last inequality can be seen as follows.
Let $M > \alpha$ be a natural number.
Then
\begin{align*}
 \| \dyad_m \phi_n \|_{\Sea} & = \| \dyad_{m+n} \phi_0 \|_{\Sea} \\
& \lesssim \| \dyad_0(2^{(\cdot)}) \|_{C^M_b[m+n-1,m+n+1]} \\
& = \| \exp(-2^{(\cdot)}) - \exp(-2 \cdot 2^{(\cdot)}) \|_{C^M_b[m+n-1,m+n+1]}. 
\end{align*}
If $m+n \geq 0,$ then this can be estimated by $\leq \|\exp(-2^{(\cdot)})\|_{C^M_b[m+n-1,m+n+1]} + \|\exp(-2 \cdot 2^{(\cdot)})\|_{C^M_b[m+n-1,m+n+1]} \lesssim \exp(-2^{m+n-1}) \lesssim 2^{-(m+n)}.$
If $m+n \leq 0,$ we use that $\exp(-2^{(\cdot)}) - \exp(-2 \cdot 2^{(\cdot)})$ is holomorphic and in absolute value less than $C |2^{(\cdot)}|,$
for $\Re(\cdot) \leq 0.$
Then the above quantity can be estimated by $\leq C 2^{m+n}.$

In all, $\sum_{m \in \Z} \dyad_m(A) \phi_n(A) x$ converges absolutely in $X.$
Therefore, $\sum_{m = -N}^N (\dyad_m \phi_n)(A) x - \phi_n(A) x = \sum_{|m| \geq N + 1} (\dyad_m \phi_n)(A) x \to 0$ in $X,$ and the claim follows.
\end{proof}

As for the $\HI$ calculus, there is an extended $\Sea$ calculus which is defined for $f_e \in \Soaloc$ with $f \rho^\nu = f(\cdot) (\cdot)^\nu ( 1 + (\cdot))^{-2\nu} \in \Sea$ for some $\nu > 0,$ as a counterpart of \eqref{Equ Extended HI calculus}.

\begin{defi}\label{Def Unbounded Besov or Sobolev calculus}
Let $A$ satisfy one of the conditions \eqref{equ 1 assumptions auxiliary calculus} - \eqref{equ 4 assumptions auxiliary calculus}, so that there is an auxiliary calculus $\Phi_A.$
Let $f_e \in \Soaloc$ with $f \rho^\nu \in \Sea$ for some $\nu > 0.$
We define the operator $f(A)$ on $D_A$ by
\[ f(A) (\sum_{n = -N}^N \dyad_n(A)x) = \sum_{n = -N}^N (f \dyad_n)(A)x.\]
Note that this definition does not depend on the representation $\sum_{n = -N}^N \dyad_n(A) x$ of the element in $D_A.$
\end{defi}

\begin{lem}\label{Lem Soaloc calculus}
Let $A$ and $f$ be as in Definition \ref{Def Unbounded Besov or Sobolev calculus} and $g$ a further function with same assumptions as $f.$
\begin{enumerate}
\item[(a)] The operator $f(A)$ is closeable in $X$, we denote the closure by slight abuse of notation again by $f(A).$
\item[(b)] If furthermore $f \in \Sea$ then $f(A)$ coincides with the operator defined by the calculus $\Phi_A.$
If $f \in \Hol(\Sigma_\omega)$ for some $\omega \in (0, \pi),$ then
$f(A)$ coincides with the (unbounded) holomorphic calculus of $A.$
\item[(c)] For any $x \in D_A,$ we have $g(A)x \in D(f(A))$ and $f(A)g(A)x = (fg)(A)x.$
\end{enumerate}
\end{lem}

\begin{proof}
(a) Let $x_n \in D_A$ with $x_n \to 0$ in $X$ such that $f(A)x_n \to y$ for some $y \in X.$
It is easy to check that $\rho^\nu(A) f(A)x_n = (f \rho^\nu)(A)x_n.$
Then $\rho^{\theta + \nu}(A)f(A)x_n = \rho^\theta(A) (f \rho^\nu)(A) x_n$ converges to $0$ on the one hand, and to $\rho^\theta(A) \rho^\nu(A) y$ on the other hand.
By injectivity of $\rho^{\theta + \nu}(A),$ it follows that $y = 0,$ and thus, $f(A)$ is closeable.\\

(b) For the statement for $f \in \Sea,$ this is easy to check on $D_A.$
Moreover, $D_A$ is dense in $D(f(A))$ by (a) and also in $D(\Phi_A(f))$ which is checked as in Lemma \ref{Lem D_A dense}.
Now pass to the closures of $f(A)$ and $\Phi_A(f).$

For the statement for $f \in \Hol(\Sigma_\omega),$ argue similarly with $D_A$ replaced by $\{ \sum_{n = -N}^N \dyad_n(A)x :\: N \in \N,\: x \in D(\theta + \mu) \}$ and $\mu > 0$ such that $f\rho^\mu \in H^\infty_0(\Sigma_\omega).$\\

(c) We first check that for $x \in D(\theta)$ and $n \in \Z,$ $g(A)\dyad_n(A)x$ belongs to $D(f(A))$ and $f(A)g(A)\dyad_n(A)x = (fg \dyad_n)(A)x.$
To this end, let $x_m \in D(2\theta)$ with $x_m \to x$ in $D(\theta)$ as $m \to \infty.$
Then $\dyad_n(A)x_m \to \dyad_n(A)x$ in $X.$
Moreover, $g(A) \dyad_n(A) x_m = (g \dyad_n)(A)x_m \to (g\dyad_n)(A)x.$
By Lemma \ref{Lem auxiliary calculus multiplicative}, we have $(g \dyad_n)(A) x_m = \dyad_n(A) (g\tdyad_n)(A)x_m,$ where $\tdyad_n = \dyad_{n-1} + \dyad_n + \dyad_{n+1}$ satisfies $\tdyad_n \dyad_n = \dyad_n.$
Since $(g \tdyad_n)(A) x_m \in D(\theta),$ $(g\dyad_n)(A)x_m$ belongs to $D_A.$
By Lemma \ref{Lem auxiliary calculus multiplicative}, $f(A) (g\dyad_n)(A)x_m = f(A) \dyad_n(A) (g \tdyad_n)(A)x_m = (f \dyad_n)(A) (g \tdyad_n)(A) x_m = (fg\dyad_n)(A) x_m \to (fg\dyad_n)(A)x$ in $X.$
By closedness of $f(A),$ $g(A)\dyad_n(A)x = (g \dyad_n)(A)x \in D(f(A))$ and $f(A) g(A) \dyad_n(A) x = (fg \dyad_n)(A) x$ for any $x \in D(\theta).$
We infer
\[f(A)g(A) \sum_{n = -N}^N \dyad_n(A) x = \sum_{n = -N}^N (fg\dyad_n)(A)x = (fg)(A) \sum_{n = -N}^N \dyad_n(A) x \]
for any element $\sum_{n= -N}^N \dyad_n(A) x \in D_A.$
\end{proof}

Note that the H\"ormander class $\Ha$ is contained in $\Sobexp^\alpha_{2,\loc}.$
Thus the $\Sobexp^\alpha_{2,\loc}$ calculus in Lemma \ref{Lem Soaloc calculus} enables us to define the $\Ha$ calculus,
whose boundedness is a main object of investigation in this article.

\begin{defi}\label{Def Hor calculus}
Let $\alpha > \frac12$ and let $A$ be a $0$-sectorial operator having a bounded $\HI(\Sigma_\omega)$ calculus for some $\omega \in (0,\pi).$
We say that $A$ has a (bounded) $\Ha$ calculus if there exists a constant $C > 0$ such that
\begin{equation}\label{Equ Def Hor sect calculus}
\|f(A)\| \leq C \|f\|_{\Ha} \quad (f \in \bigcap_{\omega \in (0,\pi)} \HI(\Sigma_\omega) \cap \Ha).
\end{equation}
\end{defi}

Let $\alpha > \frac12$ and consider a $0$-sectorial operator $A$ having a $\Ha$ calculus in the sense of Definition \ref{Def Hor calculus}.
Then $A$ has a $\Sea$ calculus and thus, we can apply Lemma \ref{Lem Soaloc calculus} (with $\theta = 0$) and consider the unbounded $\Sobexp^\alpha_{2,\loc}$ calculus of $A,$ and in particular $f(A)$ is defined for $f \in \Ha \subset \Sobexp^\alpha_{2,\loc}.$
Then condition \eqref{Equ Def Hor sect calculus} extends automatically to all $f \in \Ha.$

\section{Wave Operators and Bounded Imaginary Powers}\label{Sec 3 Wave}

In this section, we assume that $A$ is a $0$-sectorial operator.
We relate wave operators with imaginary powers of $A$ by means of the Mellin transform
$M : L^2(\R_+,\frac{ds}{s}) \to L^2(\R,dt),\,f \mapsto \int_0^\infty f(s) s^{it} \frac{ds}{s}.$

\begin{prop}\label{Prop Wave BIP Function}
Let $\alpha > \frac12$ and $m \in \N$ such that $m > \alpha - \frac12.$
Assume that $A$ satisfies one of the assumptions \eqref{equ 1 assumptions auxiliary calculus}, \eqref{equ 2 assumptions auxiliary calculus}, \eqref{equ 3 assumptions auxiliary calculus} or \eqref{equ 4 assumptions auxiliary calculus} for some $\beta$ and $\theta.$
Since $\lambda \mapsto \lambda^{\frac12 - \alpha} (e^{\mp is\lambda} - 1)^m$ belongs to $\Sobexp^\alpha_{2,\loc}$ with polynomial growth at $0$ and $\infty,$ this entails that $A^{\frac12 - \alpha} (e^{\mp isA} - 1)^m$ are well-defined closed operators for $s > 0$ with domain containing $D_A$ from \eqref{Equ D}.
Assume moreover that
\[\| t \mapsto \langle t \rangle^{-\alpha} \spr{A^{it}x}{x'} \|_{L^2(\R,dt)} \leq C \|x\|_X \,\|x'\|_{X'} \quad (x \in D_A, \: x' \in X')\] or
\[\| s \mapsto \spr{(sA)^{\frac12 - \alpha} (e^{\mp i s A} - 1)^m x}{x'} \|_{L^2(\R_+, \frac{ds}{s})} \leq C \|x\|_X \, \|x'\|_{X'} \quad (x \in D_A, \: x' \in X').\]
Then for any $x \in D_A$ from \eqref{Equ D}, and $x' \in X',$ we have the identity in $L^2(\R,dt):$
\[ M \left[ \spr{(sA)^{\frac12 - \alpha} (e^{\mp i s A} - 1)^m x}{x'} \right](t) = h_{\mp}(t) \spr{A^{-it}x}{x'},\]
where
\begin{equation}\label{Equ h_mp}
h_\mp(t) = e^{\mp i \frac{\pi}{2} (\frac12 - \alpha)} e^{\pm \frac{\pi}{2} t} \Gamma(\frac12 - \alpha + it) f_m(\frac12 - \alpha + it)
\end{equation}
with
\begin{equation}\label{Equ f_m}
f_m(z) = \sum_{k = 1}^m \binom{m}{k} (-1)^{m-k} k^{-z}
\end{equation}
and $h_\mp$ satisfies $|h_\mp(t)| \lesssim \langle t \rangle^{-\alpha}.$
\end{prop}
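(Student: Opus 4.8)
The plan is to carry out the computation first at the level of the symbol $\lambda\mapsto(s\lambda)^{\frac12-\alpha}(e^{\mp is\lambda}-1)^m$ with a scalar $\lambda>0$ in place of $A$, and then to transfer the resulting identity to $A$ through the functional calculus. Substituting $u=s\lambda$ in $\int_0^\infty(s\lambda)^{\frac12-\alpha}(e^{\mp is\lambda}-1)^m s^{it}\tfrac{ds}{s}$ turns it into $\lambda^{-it}\int_0^\infty u^{w-1}(e^{\mp iu}-1)^m\,du$ with $w=\frac12-\alpha+it$, so everything reduces to evaluating $H_\mp(w):=\int_0^\infty u^{w-1}(e^{\mp iu}-1)^m\,du$. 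Since $(e^{\mp iu}-1)^m$ vanishes to order exactly $m$ at $0$ and has nonzero mean at $\infty$, while $-m<\Re w=\frac12-\alpha<0$ by the hypotheses $m>\alpha-\frac12$ and $\alpha>\frac12$, this integral converges absolutely and depends holomorphically on $w$ on the strip $-m<\Re w<0$. On the substrip $-1<\Re w<0$ I would combine the algebraic identity $(e^{\mp iu}-1)^m=\sum_{k=1}^m\binom mk(-1)^{m-k}(e^{\mp iku}-1)$ (valid because $\sum_{k=0}^m\binom mk(-1)^{m-k}=0$) with the elementary formula $\int_0^\infty u^{w-1}(e^{-au}-1)\,du=\Gamma(w)a^{-w}$, which holds for $-1<\Re w<0$ and $\Re a\ge0$, $a\ne0$ (integrate by parts once to reduce to Euler's integral for $\Re a>0$, then let $\Re a\downarrow0$ by continuity). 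Taking $a=\pm ik$ gives $H_\mp(w)=\Gamma(w)\sum_{k=1}^m\binom mk(-1)^{m-k}(\pm ik)^{-w}=\Gamma(w)(\pm i)^{-w}f_m(w)$ on $-1<\Re w<0$. The factor $(\pm i)^{-w}f_m(w)=\sum_{k=1}^m\binom mk(-1)^{m-k}(\pm ik)^{-w}$ vanishes at $w=-1,\dots,-(m-1)$ — this is the vanishing of the $m$-th finite difference of $x^j$ for $1\le j\le m-1$ — hence cancels the poles of $\Gamma$ there, so the right-hand side is holomorphic on all of $-m<\Re w<0$ and the identity $H_\mp=\Gamma(w)(\pm i)^{-w}f_m(w)$ persists on the whole strip by analytic continuation, in particular at $w=\frac12-\alpha+it$. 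Using $(\pm i)^{-w}=e^{\mp i\frac\pi2 w}=e^{\mp i\frac\pi2(\frac12-\alpha)}e^{\pm\frac\pi2 t}$ this is exactly $H_\mp=h_\mp$ as in \eqref{Equ h_mp}, \eqref{Equ f_m}; in particular, for each fixed $\lambda>0$ one has $\int_0^\infty(s\lambda)^{\frac12-\alpha}(e^{\mp is\lambda}-1)^m s^{it}\tfrac{ds}{s}=h_\mp(t)\lambda^{-it}$.

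For the estimate $|h_\mp(t)|\lesssim\langle t\rangle^{-\alpha}$, Stirling's asymptotics give $|\Gamma(\tfrac12-\alpha+it)|\lesssim\langle t\rangle^{-\alpha}e^{-\frac\pi2|t|}$ as $|t|\to\infty$; the exponential factor $e^{\pm\frac\pi2 t}$ in $h_\mp$ precisely offsets $e^{-\frac\pi2|t|}$ on the half-line where it would otherwise grow and produces extra decay on the other half-line, while $|f_m(\tfrac12-\alpha+it)|\le\sum_{k=1}^m\binom mk k^{\alpha-\frac12}$ is bounded; together with the continuity of $h_\mp$ (the only possible poles of $\Gamma$ met by the line $\Re w=\frac12-\alpha$, namely $w=-1,\dots,-(m-1)$ when $\alpha-\frac12\in\{1,\dots,m-1\}$, are cancelled as above) this yields $|h_\mp(t)|\lesssim\langle t\rangle^{-\alpha}$ for all $t\in\R$.

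It remains to pass from the scalar identity to $A$. Fix $x\in D_A$, $x'\in X'$; by linearity and the definition \eqref{Equ D} of $D_A$ I may assume $x\in\dyad_n(A)X$. Whichever of the two hypotheses is in force, one first checks that $s\mapsto\langle(sA)^{\frac12-\alpha}(e^{\mp isA}-1)^m x,x'\rangle$ lies in $L^2(\R_+,\tfrac{ds}{s})$: this is the hypothesis itself in the second case, while in the first case $t\mapsto h_\mp(t)\langle A^{-it}x,x'\rangle$ lies in $L^2(\R)$ by the bound just proved together with the assumed $L^2$-bound on $\langle A^{it}x,x'\rangle$, hence has an inverse Mellin transform in $L^2(\R_+,\tfrac{ds}{s})$, which one identifies with that function. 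Its Mellin transform is then a well-defined element of $L^2(\R,dt)$, and to evaluate it I would interchange the $s$-integration with the functional calculus, invoking the pointwise scalar identity $\int_0^\infty(s\lambda)^{\frac12-\alpha}(e^{\mp is\lambda}-1)^m s^{it}\tfrac{ds}{s}=h_\mp(t)\lambda^{-it}$ from the first step: applying $\langle\,\cdot\,x,x'\rangle$ ``under the integral sign'' to this relation produces the asserted identity $M[\langle(sA)^{\frac12-\alpha}(e^{\mp isA}-1)^m x,x'\rangle](t)=h_\mp(t)\langle A^{-it}x,x'\rangle$ in $L^2(\R,dt)$.

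The delicate point I expect is exactly this last interchange. The operators $A^{\frac12-\alpha}(e^{\mp isA}-1)^m$ are only assumed well-defined — they are generally not reachable by the extended holomorphic calculus, since $e^{\mp is\lambda}$ grows exponentially off $\R_+$ — so the exchange of the $s$-integral with the Cauchy integral defining the calculus cannot be done by a naive Fubini argument. I would justify it by inserting a regularisation (for instance a cut-off such as $e^{-\varepsilon(s^2+s^{-2})}$ in $s$, or by working on $\dyad_n(A)X$, where $\mu\mapsto\mu^{-it}\dyad_n(\mu)$ is compactly supported so that the holomorphic calculus applies), controlling the small-$s$ range via $(e^{\mp isA}-1)^m x=O(s^m)$ for $x\in D_A$ and the large-$s$ range via the factor $s^{\frac12-\alpha}$ together with the relevant $L^2$-bound, and then removing the regularisation by dominated convergence, using $|h_\mp(t)|\lesssim\langle t\rangle^{-\alpha}$ on the $t$-side.
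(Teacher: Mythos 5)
Your proof is correct and reaches the same formula, but the scalar Mellin computation takes a genuinely different route from the paper's. The paper first establishes $\int_0^\infty s^z(e^{-s}-1)^m\,ds/s=\Gamma(z)f_m(z)$ for $\Re z\in(-m,0)$ by induction on $m$ (Lemma \ref{Lem Wave Technical 1}), with an auxiliary identity for $\int_0^\infty s^z(e^{-s}-1)^m e^{-s}\,ds/s$ and Pascal's rule, and then separately performs a contour shift (Lemma \ref{Lem Wave Technical 2}) to replace $e^{-s}$ by $e^{\mp i\mu s}$. You instead work directly with the oscillatory exponential: you expand $(e^{\mp iu}-1)^m$ by the binomial theorem into a sum of first-order pieces $e^{\mp iku}-1$ on the narrow strip $-1<\Re w<0$, invoke the elementary $m=1$ formula $\int_0^\infty u^{w-1}(e^{-au}-1)\,du=\Gamma(w)a^{-w}$ with $a=\pm ik$ (justified by a continuity argument in $\Re a\downarrow 0$, which is your counterpart of the paper's Lemma \ref{Lem Wave Technical 2}), and then analytically continue to $-m<\Re w<0$ after checking that the zeros of $(\pm i)^{-w}f_m(w)$ at $w=-1,\dots,-(m-1)$ (the vanishing of $m$-th finite differences of $x^j$ for $j<m$) cancel the poles of $\Gamma$. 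This gives an induction-free derivation at the price of the pole-cancellation observation, which the paper's purely real inductive calculation avoids. The $\Gamma$-asymptotics and the role of $e^{\pm\pi t/2}$ in the bound $|h_\mp(t)|\lesssim\langle t\rangle^{-\alpha}$ match the paper (the paper's proof has an apparent typo, writing $\Gamma(-\tfrac12+\alpha+it)$ where $\Gamma(\tfrac12-\alpha+it)$ is meant), and you additionally make explicit the boundedness near $t=0$ when $\alpha-\tfrac12\in\{1,\dots,m-1\}$ via the same cancellation, a point the paper leaves implicit. Both you and the paper defer the passage from the scalar identity to the operator $A$ to a regularization argument; the paper cites \cite[Proposition 4.40]{Kr}, and your sketch is consistent with it, though your reduction to ``$x\in\dyad_n(A)X$'' should more precisely be a finite sum of spectral blocks $\dyad_k(A)x$, and the resulting localized symbol is a compactly supported smooth (not holomorphic) function to be handled by the $\Sea$ or $\Mih^\gamma$ calculus rather than the Cauchy integral.
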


The two following lemmas are devoted to the proof of Proposition \ref{Prop Wave BIP Function}.

\begin{lem}\label{Lem Wave Technical 1}
Let $m \in \N$ and $\Re z \in (-m,0).$
Then
\[ \int_0^\infty s^z (e^{-s} - 1)^m \frac{ds}{s} = \Gamma(z) f_m(z),\]
with $f_m$ given in \eqref{Equ f_m}.
Note that $\Gamma(z) f_m(z)$ is a holomorphic function for $\Re z \in (-m,0).$
\end{lem}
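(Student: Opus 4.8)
\emph{Proof plan.} The plan is to expand the $m$-th power by the binomial theorem and then remove the single obstruction to term-by-term Mellin integration by integrating by parts $m$ times. Write
\[ g(s) = (e^{-s}-1)^m = \sum_{k=0}^m \binom{m}{k} (-1)^{m-k} e^{-ks}. \]
First I would record two elementary facts: $g$ is analytic on $[0,\infty)$ with a zero of order exactly $m$ at the origin (from the factorization $g(s) = (-1)^m s^m(1 + O(s))$), while $g(s) \to (-1)^m$ as $s \to \infty$. Hence $s^{z-1}g(s) = O(s^{\Re z + m - 1})$ near $0$ and $O(s^{\Re z - 1})$ near $\infty$, so the integral converges absolutely precisely when $\Re z \in (-m,0)$, which is the range in the statement.

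The key observation is that na\"ive term-by-term integration $\sum_k \binom{m}{k}(-1)^{m-k}\int_0^\infty s^{z-1} e^{-ks}\,ds$ fails \emph{only} because of the $k=0$ summand (a constant, whose Mellin transform diverges), and this summand, being a polynomial in $k$ of degree $0 < m$, is annihilated by applying $m$ derivatives. Concretely, for $z$ in the strip with $z+j \neq 0$ for $j=0,\dots,m-1$, I would integrate by parts $m$ times to reach
\[ \int_0^\infty s^{z-1} g(s)\,ds = \frac{(-1)^m}{z(z+1)\cdots(z+m-1)} \int_0^\infty s^{z+m-1} g^{(m)}(s)\,ds. \]
At the $j$-th step the boundary term is $\bigl[\tfrac{s^{z+j}}{z+j} g^{(j)}(s)\bigr]_0^\infty$; it vanishes at $\infty$ because $g^{(j)}$ decays exponentially for $j \geq 1$ and because $\Re z < 0$ when $j = 0$, and it vanishes at $0$ because $g^{(j)}(s) = O(s^{m-j})$ while $\Re(z+m) > 0$.

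Now $g^{(m)}(s) = (-1)^m \sum_{k=1}^m \binom{m}{k}(-1)^{m-k} k^m e^{-ks}$ — the $k=0$ term has disappeared — so each $\int_0^\infty s^{z+m-1} e^{-ks}\,ds = \Gamma(z+m) k^{-z-m}$ is now a legitimate Gamma integral (valid since $\Re(z+m) > 0$). Substituting and using $\Gamma(z+m) = z(z+1)\cdots(z+m-1)\Gamma(z)$ collapses the prefactor and leaves $\Gamma(z)\sum_{k=1}^m \binom{m}{k}(-1)^{m-k} k^{-z} = \Gamma(z) f_m(z)$. Finally, both sides are holomorphic on the whole strip $\Re z \in (-m,0)$ — the left side by differentiation under the integral sign, the right side as already noted in the statement — so the identity, established off the discrete exceptional set $\{-1,\dots,-(m-1)\}$, extends to the entire strip by analytic continuation.

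The only mildly delicate point is bookkeeping the boundary terms through the $m$ iterations and handling the exceptional values $z = -j$; the order-of-vanishing estimate $g^{(j)}(s) = O(s^{m-j})$ near $0$ takes care of the former, and analytic continuation disposes of the latter, so I do not anticipate any genuine obstacle.
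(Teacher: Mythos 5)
Your proof is correct, but it takes a genuinely different route from the paper's. The paper argues by induction on $m$: it proves an auxiliary identity for $\int_0^\infty s^z (e^{-s}-1)^m e^{-s}\,\frac{ds}{s}$ (itself established by analytic continuation from $\Re z > 0$, where term-by-term expansion is legitimate), and then at the inductive step splits $(e^{-s}-1)^{m+1} = (e^{-s}-1)^m e^{-s} - (e^{-s}-1)^m$, applies the auxiliary identity and the inductive hypothesis, reindexes, and invokes Pascal's rule $\binom{m}{k-1}+\binom{m}{k}=\binom{m+1}{k}$; a final appeal to the identity theorem widens the strip from $\Re z \in (-m,0)$ to $\Re z \in (-(m+1),0)$. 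You instead integrate by parts $m$ times in a single pass, observing that this kills the $k=0$ term and produces $\tfrac{(-1)^m}{z(z+1)\cdots(z+m-1)}\int_0^\infty s^{z+m-1}g^{(m)}(s)\,ds$, after which every remaining exponential is a legitimate Gamma integral and the Gamma functional equation collapses the prefactor. Your argument is more direct and makes the mechanism transparent (the constant summand is a polynomial of degree $<m$ in $k$ and is annihilated by $m$ differentiations), at the cost of carefully bookkeeping $m$ boundary terms and a small analytic-continuation patch over the exceptional set $\{-1,\dots,-(m-1)\}$; the paper's induction avoids any multi-step boundary bookkeeping and isolates the binomial combinatorics into a single use of Pascal's rule, at the cost of a more roundabout auxiliary identity. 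One small check your write-up leaves implicit but that is worth recording: the holomorphy of $\Gamma(z)f_m(z)$ on the full strip, which you cite from the statement, is precisely the cancellation $f_m(-j)=\sum_{k=1}^m\binom{m}{k}(-1)^{m-k}k^j=0$ for $j=1,\dots,m-1$ (the $m$-th finite difference of a degree-$j$ polynomial), compensating the simple poles of $\Gamma$ there — the same fact that makes your analytic-continuation step legitimate.
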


\begin{proof}
We proceed by induction over $m.$
In the case $m = 1,$ we obtain by integration by parts
$\int_0^\infty s^{z} (e^{-s}-1) \frac{ds}s
= \left[ \frac1z s^z (e^{-s}-1)\right]^\infty_0 + \int_0^\infty \frac1z s^{z} e^{-s} ds
= 0 + \frac1z \Gamma(z+1) = \Gamma(z) = \Gamma(z)f_1(z).$
Next we claim that for $\Re z > -m,$
\[ \int_0^\infty s^z (e^{-s}-1)^m e^{-s} \frac{ds}s = \Gamma(z) \sum_{k=0}^m \binom{m}{k} (-1)^{m-k} (k+1)^{-z} .\]
Note that the left hand side is well-defined and holomorphic for $\Re z > -m$ and the right hand side is meromorphic on $\C.$
By the identity theorem for meromorphic functions, it suffices to show the claim for e.g. $\Re z > 0.$
For these $z$ in turn, we can develop
\[ \int_0^\infty s^{z} (e^{-s}-1)^m e^{-s} \frac{ds}s = \sum_{k=0}^m \binom{m}{k} (-1)^{m-k} \int_0^\infty s^z e^{-ks}e^{-s} \frac{ds}s,\]
which gives the claim.

Assume now that the lemma holds for some $m.$
Let first $\Re z \in (-m,0).$
In the following calculation, we use both the claim and the induction hypothesis in the second equality,
and the convention $\binom{m}{m + 1} = 0$ in the third.
\begin{align}
\int_0^\infty s^z (e^{-s}-1)^{m+1}\frac{ds}s & = \int_0^\infty s^z (e^{-s}-1)^m e^{-s} \frac{ds}s - \int_0^\infty s^z (e^{-s}-1)^m \frac{ds}s
\nonumber\\
& = \Gamma(z) \sum_{k=0}^m \binom{m}{k} (-1)^{m-k} (k+1)^{-z}  - \Gamma(z) f_m(z)
\nonumber\\
& = \Gamma(z) \sum_{k=1}^{m+1} \binom{m}{k-1} (-1)^{m+1-k} k^{-z}  + \Gamma(z) \sum_{k=1}^{m+1} \binom{m}{k}(-1)^{m+1-k} k^{-z}
\nonumber\\
& = \Gamma(z) \sum_{k=1}^{m+1} \left[ \binom{m}{k-1} + \binom{m}{k}\right](-1)^{m+1-k} k^{-z}
\nonumber\\
& = \Gamma(z) f_{m+1}(z).
\nonumber
\end{align}
Thus, the lemma holds for $m+1$ and $\Re z \in (-m,0).$
For $\Re z \in (-(m+1),-m],$ we appeal again to the identity theorem.
\end{proof}

\begin{lem}\label{Lem Wave Technical 2}
Let $\Re z \in (-m,0)$ and $\Re \lambda \geq 0.$
Then
\[ \int_0^\infty s^z (e^{-\lambda s} - 1)^m \frac{ds}s = \lambda^{-z}\int_0^\infty s^z (e^{-s}-1)^m \frac{ds}s.\]
\end{lem}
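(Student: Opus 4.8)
The plan is to establish the identity first for real $\lambda > 0$ by the scaling substitution $u = \lambda s$, and then to extend it to all $\lambda$ with $\Re\lambda \geq 0$, $\lambda \neq 0$, by analytic continuation together with a boundary continuity argument.

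First I would record that for every $\lambda$ with $\Re\lambda \geq 0$ and $\lambda \neq 0$ the integral on the left converges absolutely. Near $s = 0$ one has $|e^{-\lambda s} - 1| = \bigl| \int_0^s \lambda e^{-\lambda\xi}\, d\xi \bigr| \leq |\lambda|\, s$, using $|e^{-\lambda\xi}| \leq 1$ for $\Re\lambda \geq 0$; hence the integrand is $O(s^{\Re z + m - 1})$, integrable at $0$ since $\Re z > -m$. Near $s = \infty$ one has $|e^{-\lambda s} - 1| \leq 2$, so the integrand is $O(s^{\Re z - 1})$, integrable at $\infty$ since $\Re z < 0$. (This is precisely the strip in which the integral on the right is defined, cf.\ Lemma~\ref{Lem Wave Technical 1}.) For real $\lambda > 0$ the substitution $u = \lambda s$ gives $\frac{ds}{s} = \frac{du}{u}$ and $s^z = \lambda^{-z} u^z$, so the left-hand side becomes $\lambda^{-z}\int_0^\infty u^z(e^{-u} - 1)^m\frac{du}{u}$, which is the asserted formula.

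To pass to complex $\lambda$, consider $F(\lambda) = \int_0^\infty s^z(e^{-\lambda s} - 1)^m\frac{ds}{s}$ on the open right half-plane. On any compact set $K \subset \{\Re\lambda > 0\}$ the two bounds above, combined with $\inf_K|\lambda| > 0$ and $\sup_K|\lambda| < \infty$, dominate the integrand by a fixed integrable function of $s$; since for each $s$ the integrand is entire in $\lambda$, Morera's theorem together with Fubini (or differentiation under the integral sign) shows that $F$ is holomorphic on $\{\Re\lambda > 0\}$. The right-hand side $\lambda^{-z}\int_0^\infty s^z(e^{-s}-1)^m\frac{ds}{s}$ is holomorphic there as well, since $\lambda \mapsto \lambda^{-z}$ (principal branch, holomorphic on $\C\setminus(-\infty,0]$) is. As the two sides agree on the ray $(0,\infty)$ by the previous step, the identity theorem gives their equality on all of $\{\Re\lambda > 0\}$. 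Finally, both sides are continuous on $\{\Re\lambda \geq 0\}\setminus\{0\}$: for $F$ this follows from dominated convergence, using the same domination over a bounded neighbourhood of a given point of $i\R\setminus\{0\}$ (on which $|\lambda|$ stays bounded away from $0$ and $\infty$), and for the right-hand side from continuity of $\lambda \mapsto \lambda^{-z}$. Letting $\lambda$ approach the imaginary axis yields the identity there too.

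The only genuinely technical point is the uniform domination of the integrand near $s = 0$ and near $s = \infty$ as $\lambda$ ranges over a neighbourhood of a boundary point, which is what makes both the holomorphy and the boundary continuity work; this is handled by the elementary estimate $|e^{-\lambda s} - 1| \leq \min(|\lambda|\, s,\, 2)$ valid for all $\Re\lambda \geq 0$. Everything else reduces to the substitution $u = \lambda s$ and the identity theorem.
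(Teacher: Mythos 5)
Your proof is correct, but it takes a genuinely different route from the paper's. The paper disposes of the lemma in one line by ``the Cauchy integral theorem,'' which here means a contour rotation: one integrates $s^z(e^{-s}-1)^m\frac{ds}{s}$ over the boundary of a truncated sector between the ray $\R_+$ and the rotated ray $\lambda^{-1}\R_+$, checks that the arc contributions vanish as the outer radius tends to $\infty$ and the inner radius to $0$ (using the same two bounds $|e^{-s}-1|\lesssim |s|$ near $0$ and $\lesssim 2$ near $\infty$), and reads off the identity directly. You instead establish the formula for real $\lambda>0$ by the substitution $u=\lambda s$, prove holomorphy of $F(\lambda)$ on the open right half-plane via Morera/Fubini, extend by the identity theorem, and then recover the boundary case $\Re\lambda = 0$ by dominated convergence. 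Both arguments rest on exactly the same elementary estimate $|e^{-\lambda s}-1|\le\min(|\lambda|s,2)$ for $\Re\lambda\ge 0$; the paper's contour shift trades your Morera/identity-theorem/boundary-limit machinery for an arc estimate, and is somewhat more compact, but yours is a clean and complete alternative. One small remark: the case $\lambda=0$, which the statement formally allows, is trivial (both sides vanish since $\Re(-z)>0$), so excluding it from your continuation argument loses nothing.
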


\begin{proof}
This is an easy consequence of the Cauchy integral theorem.
\end{proof}

\begin{proof}[Proof of Proposition \ref{Prop Wave BIP Function}]
Let $\mu > 0$ fixed.
Combining Lemmas \ref{Lem Wave Technical 1} and \ref{Lem Wave Technical 2} with $\lambda = \pm i \mu,$ we get
$\int_0^\infty s^z (e^{\mp i \mu s} - 1)^m \frac{ds}{s} = (e^{\pm i \frac{\pi}{2}} \mu)^{-z} \Gamma(z) f_m(z).$
Put now $z = \frac12 - \alpha + it$ for $t \in \R,$ so that $\Re z \in (-m , 0)$ by the assumptions of the proposition.
Then
$\int_0^\infty s^{it + \frac12 - \alpha}(e^{\mp i \mu s} - 1)^m \frac{ds}{s} = e^{\mp i \frac{\pi}{2} (\frac12 - \alpha + it)} \mu^{-it} \mu^{-(\frac12 - \alpha)} \Gamma(z) f_m(z),$
so that with $h_{\mp}(t)$ as in \eqref{Equ h_mp},
\begin{equation}\label{Equ 1 Proof Prop Wave BIP Function} M \left[ (s\mu)^{\frac12 - \alpha} (e^{\mp i s \mu} - 1)^m \right](t) = h_{\mp}(t) \mu^{-it}.
\end{equation}
The statement of the proposition was \eqref{Equ 1 Proof Prop Wave BIP Function} with $\mu$ formally replaced by $A$ (weak identity).
It is easy to see that $\sup_{t \in \R} \left| f_m(\frac12 - \alpha + it) \right| < \infty.$
Further, the Euler Gamma function has a development \cite[p.~15]{Lebe},
$\left|\Gamma(-\frac12 + \alpha + it)\right| \cong e^{-\frac{\pi}{2}|t|} |t|^{-\alpha} \: (|t| \geq 1),$ so that $| h_{\mp}(t) | \lesssim \langle t \rangle^{-\alpha}.$
Thus by the assumption of the proposition, we have $t \mapsto h_{\mp}(t) \spr{A^{-it} x}{x'} \in L^2(\R,dt),$ or $s \mapsto \spr{ (s A)^{\frac12 - \alpha} (e^{\mp i s A} - 1)^m x}{x'} \in L^2(\R_+,\frac{ds}{s}).$
It remains to show that in \eqref{Equ 1 Proof Prop Wave BIP Function}, one can replace $\mu$ by $A$ in the weak sense.
To this end, let $f$ belong to the class $D = \vect\{ t^n e^{-t^2/2} :\: n \in \N_0\},$ which is dense in $L^2(\R).$
Moreover, we shall use that $\max(s^N,s^{-N})M'f(s)\to 0$ for any $N \in \N$ as $s \to 0$ and $s \to \infty$ for such functions $f$ and $M'$ the adjoint mapping of the Mellin transform.
Denote $g(s,\lambda) = (s\lambda)^{\frac12 - \alpha} (e^{\mp is \lambda} - 1)^m.$
Let $y = \sum_{n= -N}^N \dyad_n(A)x \in D_A$ and $x' \in X',$ and write $\tilde{g}(s,\lambda) = g(s,\lambda) \sum_{n = -N}^N \dyad_n(\lambda).$
We have with $F\tilde{g}(s,\lambda)$ denoting the Mellin transform of $\tilde{g}$ in the variable $\lambda$ and $M'$ acting in the variable $s,$
\begin{align*}
\int_\R M[\langle g((\cdot),A)y,x' \rangle](t) f(t) dt & = \int_{\R_+} \langle g(s,A)y,x' \rangle M'f(s) \frac{ds}{s} \\
& = \frac{1}{2\pi} \int_{\R_+} \int_\R F\tilde{g}(s,\lambda) \langle A^{i\lambda} x, x' \rangle d\lambda M'f(s) \frac{ds}{s} \\
& = \frac{1}{2\pi} \int_{\R} \int_{\R_+} F\tilde{g}(s,\lambda) \langle A^{i\lambda} x, x' \rangle M'f(s) \frac{ds}{s} d\lambda \\ 
& = \frac{1}{2\pi} \int_{\R} \int_{\R} MF\tilde{g}(t,\lambda) \langle A^{i\lambda} x, x' \rangle f(t) dt d\lambda \\
& = \frac{1}{2\pi} \int_{\R} \int_{\R} FM\tilde{g}(t,\lambda) \langle A^{i\lambda} x, x' \rangle f(t) dt d\lambda \\
& = \frac{1}{2\pi} \int_{\R} \int_{\R} FM\tilde{g}(t,\lambda) \langle A^{i\lambda} x, x' \rangle d\lambda f(t) dt\\
& = \int_\R \langle (Mg)(t,A) y, x' \rangle f(t) dt.
\end{align*}
Hereby, the (at most) polynomial growth of $\|\tilde{g}(s,\cdot)\|_{\Sea}$ at $s \to 0$ and $s \to \infty,$ the (at most) polynomial growth of $\|M\tilde{g}(t,\cdot)\|_{\Sea}$ at $t \to -\infty$ and $t \to \infty,$ and the decay of $f$ and $M'f$ allowed us to apply two times Fubini's theorem.
By density of such functions $f$ in $L^2(\R),$ the proposition follows.
\end{proof}

A variant of the wave operator expression $(sA)^{\frac12 - \alpha} (e^{\mp isA} - 1)^m$ from Proposition \ref{Prop Wave BIP Function} is given by the following proposition.

\begin{prop}\label{Prop Wave variant}
Let $A$ be a $0$-sectorial operator.
Assume that $A$ satisfies one of the assumptions \eqref{equ 1 assumptions auxiliary calculus}, \eqref{equ 2 assumptions auxiliary calculus}, \eqref{equ 3 assumptions auxiliary calculus} or \eqref{equ 4 assumptions auxiliary calculus} for some $\beta$ and $\theta.$
Let $\alpha - \frac12 \not\in \N_0,$ let $m \in \N_0$ such that $\alpha - \frac12 \in (m,m+1)$ and 
\[ w_\alpha(s) = s^{-\alpha} \left( e^{is} - \sum_{j=0}^{m-1} \frac{(is)^j}{j!} \right). \]
Then $w_\alpha(sA)$ is a closed densely defined operator, and with $M$ denoting again the Mellin transform, we have
\[ M(\langle (sA)^{\frac12} w_\alpha(sA) x, x' \rangle)(t) = i^{-\alpha + \frac12 + it} \Gamma(-\alpha + \frac12 + it) \langle A^{-it} x, x' \rangle \quad (x \in D_A,\: x' \in X'). \]
\end{prop}

\begin{proof}
The proof is similar to that of Proposition \ref{Prop Wave BIP Function}.
We determine the Mellin transform of $s^{\frac12} w_\alpha(s):$
By a contour shift of the integral $s \leadsto is,$
\begin{align*}
 \int_0^\infty s^{it} s^{\frac12} w_\alpha(s) \frac{ds}{s} & = \int_0^\infty (is)^{it} (is)^{\frac12} w_\alpha(is) \frac{ds}{s}
\\ & = i^{-\alpha+\frac12 + it} \int_0^\infty s^{-\alpha+\frac12 + it} (e^{-s} - \sum_{j=0}^{m-1} \frac{(-s)^j}{j!}) \frac{ds}{s}.
\end{align*}
Applying partial integration, one sees that this expression equals $i^{-\alpha + \frac12 + it} \Gamma(-\alpha + \frac12 + it).$
Thus,
\[ M(s^{\frac12} w_\alpha(s))(t) = i^{-\alpha + \frac12 + it} \Gamma(-\alpha + \frac12 + it),\]
and applying the functional calculus yields the proposition, see the end of the proof of Proposition \ref{Prop Wave BIP Function} for details.
\end{proof}

\section{Averaged $R$-boundedness}\label{Sec 4 Averaged}

Let $(\Omega,\mu)$ be a $\sigma$-finite measure space.
Throughout the section, we consider spaces $E$ which are subspaces of the space $\mathcal{L}$ of equivalence classes of measurable functions on $(\Omega,\mu).$
Here, equivalence classes refer to identity modulo $\mu$-null sets.
We require that a subspace $E_0'$ of the dual $E'$ of $E$ is given by
\[ E_0' = \{ f \in \mathcal{L} :\: \exists C > 0 \:\forall \: g \in E: \: |\spr{f}{g}| = |\int_\Omega f(t) g(t) d\mu(t)| \leq C \|g\|_E\} \]
with duality bracket $\langle f, g \rangle = \int_\Omega f(t)g(t) d\mu(t)$ and that this space is norming for $E,$ i.e. $\|g\|_E \cong \sup_{\|f\|_{E'} \leq 1,\:f \in E_0'} |\langle f, g \rangle|.$
This is clearly the case in the following examples:
\begin{align}
E & = L^p(\Omega,w d\mu)\text{ for }1 \leq p \leq \infty\text{ and a weight }w, \nonumber \\
E & = \Soa = \Soa(\R)\text{ for }\alpha > \frac12, \label{Equ Function space E} \\
E & = \Sea.\nonumber
\end{align}

\begin{defi}\label{Def R[E]-bounded}
Let $(\Omega,\mu)$ be a $\sigma$-finite measure space.
Let $E$ be a function space on $(\Omega,\mu)$ as in \eqref{Equ Function space E}.
Let $(N(t):\:t \in \Omega)$ be a family of closed operators on a Banach space $X$ such that
\begin{enumerate}
\item There exists a dense subspace $D_N \subset X$ which is contained in the domain of $N(t)$ for any $t \in \Omega.$
\item For any $x \in D_N,$ the mapping $\Omega \to X,\,t\mapsto N(t) x$ is measurable.
\item For any $x \in D_N,\,x'\in X'$ and $f \in E,\,t \mapsto f(t) \spr{N(t)x}{x'}$ belongs to $L^1(\Omega).$
\end{enumerate}
Then $(N(t):\:t\in \Omega)$ is called $R$-bounded on the $E$-average or $R[E]$-bounded, if for any $f \in E,$
there exists $N_f \in B(X)$ such that
\begin{equation}\label{Equ Def R[E]-bounded}
 \spr{N_f x}{x'} = \int_\Omega f(t) \spr{N(t)x}{x'} d\mu(t) \quad (x\in D_N,\,x'\in X')
\end{equation}
and further
\[R[E](N(t):\: t\in\Omega) := R(\{N_f :\: \|f\|_E \leq 1\}) < \infty.\]
\end{defi}

A number of very useful criteria for $R$-bounded sets known in the literature can be restated in terms of $R[E]$-boundedness.

\begin{exa}\label{Exa R[E]-bounded}
Let $(\Omega,\mu)$ be a $\sigma$-finite measure space and let $(N(t):\:t\in\Omega)$ be a family of closed operators on $X$
satisfying (1) and (2) of Definition \ref{Def R[E]-bounded}.

\begin{enumerate}
\item[a) ($E = L^1$)] Assume that the $N(t)$ are bounded operators.
If $\{N(t):\:t\in\Omega\}$ is $R$-bounded in $B(X)$, then it is also $R[L^1(\Omega)]$-bounded, and
\[ R[L^1(\Omega)](N(t):\:t \in \Omega) \leq 2 R(\{N(t):\: t\in \Omega\}).\]

Conversely, assume in addition that
$\Omega$ is a metric space,  $\mu$ is a $\sigma$-finite strictly positive Borel measure and $t \mapsto N(t)$ is strongly continuous.
If $(N(t):\:t \in \Omega)$ is $R[L^1(\Omega)]$-bounded, then it is also $R$-bounded.
\item[b) ($E = L^\infty$)] Assume that there exists $C > 0$ such that
\[\int_\Omega \|N(t)x\| d\mu(t) \leq C \|x\| \quad (x\in D_N).\]
Then $(N(t):\:t\in\Omega)$ is $R[L^\infty(\Omega)]$-bounded with constant at most $2C.$
\item[c) ($E = L^2$)] Assume that $X$ is a reflexive $L^p(U)$ space.
If \[\left\|\left( \int_\Omega |(N(t)x)(\cdot)|^2 dt\right)^{\frac12} \right\|_{L^p(U)} \leq C \|x\|_{L^p(U)}\] for all $x \in D_N,$
then $(N(t):\:t\in\Omega)$ is $R[L^2(\Omega)]$-bounded and there exists a constant $C_0 = C_0(X)$ such that
\[ R[L^2(\Omega)](N(t):\:t \in \Omega) \leq C_0 C.\]
This can be generalized to spaces $X$ with property $(\alpha)$ and the generalized square function spaces $l(\Omega,X)$ from \cite{KaW2}.
\item[d) ($E = L^{r'}$)] Assume that $X$ has type $p \in [1,2]$ and cotype $q \in [2,\infty].$
Let $1 \leq r,r' < \infty$ with $\frac1r = 1-\frac{1}{r'} > \frac1p - \frac1q.$

Assume that $N(t) \in B(X)$ for all $t \in \Omega,$ that $t \mapsto N(t)$ is strongly measurable, and that
\[\|N(t)\|_{B(X)} \in L^r(\Omega).\]
Then $(N(t):\:t\in\Omega)$ is $R[L^{r'}(\Omega)]$-bounded and there exists a constant $C_0=C_0(r,p,q,X)$ such that
\[ R[L^{r'}(\Omega)](N(t):\:t \in \Omega) \leq C_0 C.\]
\end{enumerate}
\end{exa}

\begin{proof}
($E = L^1$)
Assume that $(N(t):\:t \in \Omega)$ is $R$-bounded.
Then it follows from the Convex Hull Lemma \cite[Lemma 3.2]{CPSW} that $R[L^1(\Omega)](N(t):\:t\in \Omega) \leq 2 R(\{N(t):\:t\in\Omega\}).$
Let us show the converse under the mentioned additional hypotheses.
Suppose that $R(\{N(t):\: t\in \Omega\}) = \infty.$
We will deduce that also $R[L^1(\Omega)](N(t):\:t \in \Omega) = \infty.$
Choose for a given $N \in \N$ some $x_1,\ldots,x_n \in X \backslash \{0\}$ and $t_1,\ldots,t_n \in \Omega$ such that
\[ \E \Bignorm{ \sum_k \epsilon_k N(t_k) x_k }_{X} > N \E \Bignorm{\sum_k \epsilon_k x_k}_{X}. \]
It suffices to show that
\begin{equation}\label{Equ Aux 1}
\E \Bignorm{ \sum_k \epsilon_k \int_\Omega f_k(t)N(t) x_k d\mu(t)}_{X} > N \E \Bignorm{\sum_k \epsilon_k x_k}_{X}
\end{equation}
for appropriate $f_1,\ldots,f_n.$
It is easy to see that by the strong continuity of $N$, \eqref{Equ Aux 1} holds with
$f_k = \frac{1}{\mu(B(t_k,\epsilon))}\chi_{B(t_k,\epsilon)}$ for $\epsilon$ small enough.
Here the fact that $\mu$ is strictly positive and $\sigma$-finite guarantees that $\mu(B(t_k,\epsilon)) \in (0,\infty)$ for small $\epsilon.$\\

\noindent
($E = L^\infty$)
By \cite[the proof of Corollary 2.17]{KuWe} with $Y = X$ there,
\[\E \Bignorm{ \sum_{k=1}^n \epsilon_k N_{f_k} x_k}_{X} \leq 2C \E \Bignorm{ \sum_{k=1}^n \epsilon_k x_k}_{X}\]
for any finite family $N_{f_1},\ldots,N_{f_n}$ from \eqref{Equ Def R[E]-bounded} such that $\|f_k\|_\infty \leq 1,$ and any finite family
$x_1,\ldots,x_n \in D_N.$
Since $D_N$ is a dense subspace of $X,$ we can deduce that $\{N_f:\:\|f\|_\infty \leq 1\}$ is $R$-bounded.\\

\noindent
($E = L^2$)
For $x \in D_N,$ set $\varphi(x) = N(\cdot)x \in L^p(U,L^2(\Omega)).$
By assumption, $\varphi$ extends to a bounded operator $L^p(U) \to L^p(U,L^2(\Omega)).$
Then the assertion follows at once from \cite[Proposition 3.3]{LM} in the case that $\Omega$ is an interval.
The general case that $\Omega$ is a measure space and $X$ has property $(\alpha)$ follows from \cite[Corollary 3.19]{HaKu}.\\

\noindent
($E= L^{r'}$)
This is a result of Hyt\"onen and Veraar, see \cite[Proposition 4.1, Remark 4.2]{HyVe}.
\end{proof}

\begin{prop}\label{Prop General E}
If $E$ is a space as in \eqref{Equ Function space E} and $R[E](N(t):\:t\in\Omega) = C < \infty,$ then
\begin{equation}\label{Equ R[E]-bounded general E}
\|\spr{N(\cdot)x}{x'}\|_{E'} \leq C \|x\|\,\|x'\|\quad (x\in D_N,\,x'\in X').
\end{equation}
In particular, if $1 \leq p,p' \leq \infty$ are conjugated exponents and
\[R[L^{p'}(\Omega)](N(t):\:t\in\Omega) = C < \infty,\] then
\[\left(\int_\Omega | \spr{N(t)x}{x'}|^p d\mu(t)\right)^{1/p} \leq C \|x\|\,\|x'\|\quad (x\in D_N,\,x' \in X').\]
If $X$ is a Hilbert space, then also the converse holds:
Condition \eqref{Equ R[E]-bounded general E} implies that $(N(t):\:t \in \Omega)$ is $R[E]$-bounded.
\end{prop}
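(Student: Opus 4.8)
The plan is to extract the estimate \eqref{Equ R[E]-bounded general E} directly from the definition of $R[E]$-boundedness together with the concrete realization of $E'$ as the completion of $E_0'$, and then, in the Hilbert space case, to reverse the argument using that an $R$-bounded family in $B(H)$ is nothing but a uniformly bounded one.

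For the forward implication I would first invoke the elementary inequality $R(\tau) \geq \sup_{T \in \tau}\norm{T}$, applied to $\tau = \{N_f :\: \norm{f}_E \leq 1\}$: it shows $\norm{N_f} \leq C$ whenever $\norm{f}_E \leq 1$. Fixing $x \in D_N$ and $x' \in X'$, the function $g := \spr{N(\cdot)x}{x'}$ is measurable by condition~(2) of Definition~\ref{Def R[E]-bounded}, and by \eqref{Equ Def R[E]-bounded} together with condition~(3) it pairs with every $f \in E$ to give $|\spr{f}{g}| = |\spr{N_f x}{x'}| \leq \norm{N_f}\norm{x}\norm{x'} \leq C\norm{x}\norm{x'}$ when $\norm{f}_E \leq 1$; by homogeneity, $|\spr{f}{g}| \leq C\norm{x}\norm{x'}\norm{f}_E$ for all $f \in E$. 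Hence $g \in E_0' \subset E'$ and $\norm{g}_{E'} = \sup_{\norm{f}_E \leq 1}|\spr{f}{g}| \leq C\norm{x}\norm{x'}$, which is exactly \eqref{Equ R[E]-bounded general E}. The stated $L^p$-consequence is then immediate, because for $E = L^{p'}(\Omega)$ with $\tfrac1p + \tfrac1{p'} = 1$ the realized dual is $L^p(\Omega)$ with its usual norm.

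For the converse, assume $X = H$ is a Hilbert space. Given the hypothesis \eqref{Equ R[E]-bounded general E}, fix $f \in E$ with $\norm{f}_E \leq 1$ and consider the bilinear form $B_f(x,x') = \int_\Omega f(t)\spr{N(t)x}{x'}\,d\mu(t)$ for $x \in D_N$, $x' \in X'$; it is well defined since the integrand is measurable by~(2) and integrable because $\spr{N(\cdot)x}{x'} \in E'$ (this is precisely condition~(3), which here follows from \eqref{Equ R[E]-bounded general E}). Pairing $E$ with $E'$ and using \eqref{Equ R[E]-bounded general E} gives $|B_f(x,x')| \leq \norm{f}_E\,\norm{\spr{N(\cdot)x}{x'}}_{E'} \leq C\norm{x}\norm{x'}$. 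Since $D_N$ is dense in $H$, $B_f$ extends to a bounded bilinear form on $H \times X'$ with the same bound, and a standard duality argument (using the reflexivity of $H$) produces $N_f \in B(H)$ with $\norm{N_f} \leq C$ and $\spr{N_f x}{x'} = B_f(x,x')$; in particular $N_f$ satisfies \eqref{Equ Def R[E]-bounded}. As $R$-boundedness in $B(H)$ coincides with uniform boundedness, we conclude $R[E](N(t):\:t\in\Omega) = R(\{N_f :\: \norm{f}_E \leq 1\}) = \sup_{\norm{f}_E\leq 1}\norm{N_f} \leq C$.

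I expect the only genuinely delicate --- though still routine --- points to be the duality bookkeeping: that the pairing of $E$ with its realized dual obeys $|\spr{f}{g}| \leq \norm{f}_E\norm{g}_{E'}$ and that membership in $E'$ is detected through $E_0'$, and, on the converse side, that condition~(3) of Definition~\ref{Def R[E]-bounded} is implied by \eqref{Equ R[E]-bounded general E} so that each $N_f$ is genuinely well defined. Beyond these checks the proof is a direct repackaging of the definitions.
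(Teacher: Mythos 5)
Your argument is correct and follows essentially the same route as the paper: the forward implication comes from the general inequality $R(\tau)\geq\sup_{T\in\tau}\norm{T}$ applied to $\tau=\{N_f:\norm{f}_E\leq 1\}$ together with the realization of $E'$ via $E_0'$, and the converse relies on the fact that in a Hilbert space $R$-bounded and uniformly bounded coincide. The only difference is that you make explicit the construction of $N_f$ from \eqref{Equ R[E]-bounded general E} via the bilinear form $B_f$; the paper leaves this implicit when it asserts that the ``$\geq$'' becomes ``$=$'', so your version is marginally more complete on the converse while being substantially the same proof.
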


\begin{proof}
We have
\begin{align}
& R[E](N(t):\:t\in\Omega)
\nonumber \\
& \geq \sup\{ \|N_f\|_{B(X)}:\:\|f\|_{E} \leq 1\}
\label{Equ Proof Prop Exa R[E]-bounded} \\
& = \sup\left\{ \left | \int_\Omega f(t) \spr{N(t)x}{x'} d\mu(t) \right|:\: \|f\|_{E} \leq 1 ,\,x\in D_N,\|x\| \leq 1, \,x'\in X',\|x'\| \leq 1\right\}
\nonumber \\
& = \sup\left\{\|\spr{N(\cdot)x}{x'}\|_{E'}:\:x \in D_N,\|x\| \leq 1,\,x' \in X',\|x'\| \leq 1\right\}.
\nonumber
\end{align}

If $X$ is a Hilbert space, then bounded subsets of $B(X)$ are $R$-bounded,
and thus, ``$\geq$'' in \eqref{Equ Proof Prop Exa R[E]-bounded} is in fact ``$=$''.
\end{proof}

An $R[E]$-bounded family yields a new averaged $R$-bounded family under a linear transformation in the function space variable.

\begin{lem}\label{Lem Transformation R[E]-bounded}
For $i=1,2,$ let $(\Omega_i,\mu_i)$ be a $\sigma$-finite measure space and $E_i$ a function space on $\Omega_i$ as in \eqref{Equ Function space E},
and $K \in B(E_1',E_2')$ such that its adjoint $K'$ maps $E_2$ to $E_1.$

Let further $(N(t):\:t \in \Omega_1)$ be an $R[E_1]$-bounded family of closed operators and $D_N$ be a core for all $N(t).$
Assume that there exists a family $(M(t):\:t \in \Omega_2)$ of closed operators with the same common core $D_M=D_N$ such that
$t \mapsto M(t)x$ is measurable for all $x \in D_N$ and
\[ \spr{M(\cdot)x}{x'} = K(\spr{N(\cdot)x}{x'})\quad (x\in D_N,x'\in X').\]
Then $(M(t) :\: t \in \Omega_2)$ is $R[E_2]$-bounded and
\[R[E_2](M(t):\:t\in \Omega_2) \leq \|K\| R[E_1](N(t):\:t \in \Omega_1).\]
\end{lem}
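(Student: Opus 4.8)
The plan is to unwind both sides of the desired inequality through the characterization of $R[E]$-boundedness via the dual norm from Proposition \ref{Prop General E}, and then use the duality hypothesis on $K$ to transfer the estimate. First I would verify that $(M(t):\:t\in\Omega_2)$ satisfies conditions (1)--(3) of Definition \ref{Def R[E]-bounded}: condition (1) holds since $D_M = D_N$ is dense and a common core, condition (2) is assumed, and condition (3) follows because for $f \in E_2$ we have $\int_{\Omega_2} f(t) \spr{M(t)x}{x'} d\mu_2(t) = \spr{f}{K(\spr{N(\cdot)x}{x'})} = \spr{K'f}{\spr{N(\cdot)x}{x'}}$, and $K'f \in E_1$ while $\spr{N(\cdot)x}{x'} \in E_1'$ (in the sense that it lies in $E_1'$ with the appropriate norm bound, which is exactly what $R[E_1]$-boundedness gives), so the pairing is finite.

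Next I would identify the operator $M_f$ for $f \in E_2$. Using the computation just above, $\spr{M_f x}{x'} = \spr{K'f}{\spr{N(\cdot)x}{x'}} = \spr{N_{K'f} x}{x'}$ for all $x \in D_N$, $x' \in X'$, where $N_{K'f}$ is the operator from \eqref{Equ Def R[E]-bounded} associated to the function $K'f \in E_1$. Since $D_N$ is dense and $X'$ separates points, this forces $M_f = N_{K'f} \in B(X)$. The key point is then that $\|K'f\|_{E_1} \leq \|K'\|\, \|f\|_{E_2} = \|K\|\, \|f\|_{E_2}$, so the map $f \mapsto \frac{1}{\|K\|} K'f$ sends the unit ball of $E_2$ into the unit ball of $E_1$. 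Consequently $\{M_f :\: \|f\|_{E_2} \leq 1\} = \{N_{K'f} :\: \|f\|_{E_2} \leq 1\} \subset \{N_g :\: \|g\|_{E_1} \leq \|K\|\} = \|K\| \cdot \{N_g :\: \|g\|_{E_1} \leq 1\}$ (using linearity of $g \mapsto N_g$). Since $R$-boundedness scales linearly under scalar multiples, $R(\{M_f :\: \|f\|_{E_2}\leq 1\}) \leq \|K\|\, R(\{N_g :\: \|g\|_{E_1} \leq 1\}) = \|K\|\, R[E_1](N(t):\:t\in\Omega_1)$, which is the claim.

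The main obstacle I anticipate is purely a matter of bookkeeping rather than substance: making precise the sense in which $\spr{N(\cdot)x}{x'}$ is an element of $E_1'$ so that the pairing $\spr{K'f}{\spr{N(\cdot)x}{x'}}$ is legitimately the action of the functional $K'f \in E_1$ on that element, and checking that this agrees with the Lebesgue integral appearing in \eqref{Equ Def R[E]-bounded}. This is exactly where the standing assumption on $E'$ being the completion of $E_0'$ (made at the start of Section \ref{Sec 4 Averaged}) is used, together with the fact that $R[E_1]$-boundedness provides the bound $\|\spr{N(\cdot)x}{x'}\|_{E_1'} \leq R[E_1](N)\,\|x\|\,\|x'\|$ from Proposition \ref{Prop General E}; once one knows $\spr{N(\cdot)x}{x'} \in E_1'$ in that normed sense, the identity $\int f \spr{M(\cdot)x}{x'} = \int (K'f) \spr{N(\cdot)x}{x'}$ is just the statement that $K'$ is the adjoint of $K$, valid first for $f$ in a norming dense subset of $E_2$ and then extended by continuity. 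No deeper difficulty arises: the result is essentially a transposition argument combined with the linear scaling of $R$-bounds.
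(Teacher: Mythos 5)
Your argument is correct and follows essentially the same route as the paper's proof: both identify $M_f = N_{K'f}$ via the duality relation $\int_{\Omega_2} f \spr{M(\cdot)x}{x'}\,d\mu_2 = \int_{\Omega_1} (K'f) \spr{N(\cdot)x}{x'}\,d\mu_1$, then use $\|K'f\|_{E_1} \leq \|K\|\,\|f\|_{E_2}$ together with the set inclusion $\{N_{K'f} : \|f\|_{E_2}\leq 1\} \subset \|K\|\cdot\{N_g : \|g\|_{E_1}\leq 1\}$ and the homogeneity of $R$-bounds. Your extra remarks on verifying conditions (1)--(3) and on the bookkeeping needed to justify the pairing via Proposition \ref{Prop General E} are a slightly more explicit write-up of the same reasoning; there is no substantive difference from the paper's proof.
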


\begin{proof}
Let $x \in D_N$ and $x' \in X'.$
By \eqref{Equ R[E]-bounded general E} in Proposition \ref{Prop General E}, we have $\spr{N(\cdot)x}{x'} \in E_1',$
and thus, $\spr{M(\cdot)x}{x'} \in E_2'.$
For any $f \in E_2,$
\[\int_{\Omega_2} \spr{M(t)x}{x'} f(t) d\mu_2(t) = \int_{\Omega_1} \spr{N(t)x}{x'} (K'f)(t) d\mu_1(t) = \spr{N_{K'f} x}{x'}.\]
By assumption, the operator $N_{K'f}$ belongs to $B(X),$ and therefore also $M_f$ belongs to $B(X).$
Furthermore,
\begin{align}
R[E_2](M(t):\:t\in\Omega_2) & = R(\{M_f :\: \|f\|_{E_2} \leq 1\})
\nonumber \\
& = R(\{N_{K'f} :\: \|f\|_{E_2} \leq 1\})
\nonumber \\
& \leq \|K'\| R(\{N_{K'f} :\: \|K'f\|_{E_1} \leq 1\})
\nonumber\\
& \leq \|K\| R(\{N_g:\: \|g\|_{E_1} \leq 1\})
\nonumber \\
& = \|K\| R[E_1](N(t):\:t\in\Omega_1).
\nonumber
\end{align}
\end{proof}

In the following lemma, we collect some further simple manipulations of $R[E]$-boundedness.
Its proof is immediate from Definition \ref{Def R[E]-bounded}.

\begin{lem}\label{Lem R[E]-bounded technical}
Let $(\Omega,\mu)$ be a $\sigma$-finite measure space, let $E$ be as in \eqref{Equ Function space E}
and let $(N(t):\:t\in\Omega)$ satisfy (1) and (2) of Definition \ref{Def R[E]-bounded}.
\begin{enumerate}
\item Let $f \in L^\infty(\Omega)$ and $(N(t):\:t \in \Omega)$ be $R[L^p(\Omega)]$-bounded for some $1 \leq p \leq \infty.$
Then
\[R[L^p(\Omega)](f(t)N(t):\:t\in\Omega) \leq \|f\|_\infty R[L^p(\Omega)](N(t):\:t \in \Omega).\]
In particular, $R[L^p(\Omega_1)](N(t):\:t\in\Omega_1) \leq R[L^p(\Omega)](N(t):\:t\in\Omega)$ for any measurable subset $\Omega_1 \subset \Omega.$
\item\label{it Prop R[E]-bounded technical Subst}
 Let $w: \Omega \to (0,\infty)$ be measurable.
Then for $1 \leq p \leq \infty$ and $p'$ the conjugate exponent,
\[ R[L^p(\Omega,w(t) d\mu(t))](N(t):\:t\in\Omega) = R[L^p(\Omega,d\mu)](w(t)^{\frac{1}{p'}}N(t):\:t\in\Omega).\]
\item For $n \in \N,$ let $\varphi_n : \Omega \to \R_+$ with $\sum_{n=1}^\infty \varphi_n(t) = 1$ for all $t \in \Omega.$
Then
\[ R[E](N(t):\: t \in \Omega) \leq \sum_{n=1}^\infty R[E](\varphi_n(t)N(t):\: t \in \Omega).\]
\end{enumerate}
\end{lem}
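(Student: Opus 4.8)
The plan is to read off all three assertions directly from Definition \ref{Def R[E]-bounded}, using only that $h \mapsto N_h$ is linear, that the $R$-bound is positively homogeneous and subadditive over (countable, strongly convergent) sums, and elementary changes of variable in the function-space factor.

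\emph{Part (1).} Fix $g \in L^p(\Omega)$ with $\|g\|_p \leq 1$. For $x \in D_N$, $x' \in X'$ one has, since $f \in L^\infty$,
\[
\spr{(fN)_g x}{x'} = \int_\Omega g(t)f(t)\spr{N(t)x}{x'}\,d\mu(t) = \spr{N_{fg}x}{x'},
\]
so $(fN)_g = N_{fg}$ and $\|fg\|_p \leq \|f\|_\infty$. Hence $\{(fN)_g : \|g\|_p\leq 1\} \subseteq \|f\|_\infty\{N_h : \|h\|_p \leq 1\}$, and positive homogeneity of the $R$-bound gives the inequality. For the addendum one takes $f = \chi_{\Omega_1}$ and notes that the functions in $L^p(\Omega)$ supported in $\Omega_1$ are precisely, and with equal norm, the functions in $L^p(\Omega_1)$, so the averaged operators of $(N(t):t\in\Omega_1)$ are exactly the $(\chi_{\Omega_1}N)_g$ with $\|g\|_p\leq 1$.

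\emph{Part (2).} Put $h = w^{1/p}g$; this is a norm-preserving bijection between $\{g : \|g\|_{L^p(w\,d\mu)}\leq 1\}$ and $\{h : \|h\|_{L^p(d\mu)}\leq 1\}$. Since $\tfrac1p + \tfrac1{p'} = 1$,
\[
\int_\Omega g(t)\,N(t)x\,w(t)\,d\mu(t) = \int_\Omega h(t)\,\bigl(w(t)^{1/p'}N(t)\bigr)x\,d\mu(t),
\]
so the family $(N(t))$ averaged against $L^p(\Omega,w\,d\mu)$ and the family $(w^{1/p'}N(t))$ averaged against $L^p(\Omega,d\mu)$ produce literally the same subset $\{N_g\}$ of $B(X)$, hence the same $R$-bound; the cases $p\in\{1,\infty\}$ (where $w^{1/p'}$ is $w$ or $1$) are included verbatim.

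\emph{Part (3).} We may assume $c := \sum_n R[E](\varphi_n(t)N(t)) < \infty$; then each family $(\varphi_n(t)N(t))$ satisfies (1)--(3) of Definition \ref{Def R[E]-bounded} with the same core $D_N$ (using $0\leq \varphi_n\leq 1$) and is $R[E]$-bounded. Fix $x_1,\dots,x_k \in D_N$ and $g_1,\dots,g_k \in E$ with $\|g_j\|_E\leq 1$. Because $\sum_n\varphi_n\equiv 1$ and $|g_j\varphi_n\spr{N(\cdot)x_j}{x'}| \leq |g_j\spr{N(\cdot)x_j}{x'}| \in L^1(\Omega)$, dominated convergence shows that for every $x'\in X'$ and every sign vector $\omega$ the partial sums $\sum_j\epsilon_j(\omega)\bigl(\sum_{n=1}^m(\varphi_nN)_{g_j}\bigr)x_j$ converge weakly in $X$ to $\sum_j\epsilon_j(\omega)N_{g_j}x_j$ as $m\to\infty$. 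By weak lower semicontinuity of the norm, Fatou's lemma, the triangle inequality for finite Rademacher sums, and the $R[E]$-boundedness of each $\varphi_nN$,
\[
\E\Bignorm{\sum_j\epsilon_jN_{g_j}x_j} \leq \liminf_m\sum_{n=1}^m\E\Bignorm{\sum_j\epsilon_j(\varphi_nN)_{g_j}x_j} \leq c\,\E\Bignorm{\sum_j\epsilon_jx_j}.
\]
Since each $N_{g_j}\in B(X)$ and $D_N$ is dense in $X$, the inequality persists for all $x_j\in X$, so $R[E](N(t):t\in\Omega)\leq c$.

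Parts (1) and (2) are purely algebraic and routine. \textbf{The step to watch is the interchange in (3):} Definition \ref{Def R[E]-bounded} guarantees only the weak ($L^1$) integrability condition, so $\sum_n(\varphi_nN)_gx$ need not converge in norm, only weakly, and there is no Bochner-integrability hypothesis to fall back on. Passing weak convergence through the Rademacher norm via Fatou and weak lower semicontinuity is precisely what yields the subadditivity $R[E]\bigl(\sum_n\varphi_nN\bigr)\leq\sum_n R[E](\varphi_nN)$, and hence (3).
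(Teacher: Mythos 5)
Parts (1) and (2) are correct and exactly as routine as you say; the paper gives no explicit proof (it declares the lemma ``immediate from Definition \ref{Def R[E]-bounded}''), and your arguments there are what one would write.

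For part (3) your final answer is right, but your diagnosis of the ``step to watch'' is backwards, and as written the argument is subtly circular. The elementary inequality $R(\tau) \geq \sup_{T \in \tau}\|T\|$ gives
\[
\|(\varphi_n N)_g\|_{B(X)} \leq R[E](\varphi_n(t)N(t):\: t\in\Omega)\,\|g\|_E \qquad (g\in E),
\]
and since $c = \sum_n R[E](\varphi_n N) < \infty$ the operator series $\sum_n (\varphi_n N)_g$ converges \emph{absolutely in operator norm}. So the partial sums $S_m = \sum_{n\leq m}(\varphi_n N)_g$ are norm-Cauchy and the limit $N_g := \sum_n (\varphi_n N)_g \in B(X)$ is produced directly, with $\|N_g\|\leq c\|g\|_E$. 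This is important for two reasons. First, your proof writes $\spr{N_{g_j}x_j}{x'}$ and $\|\sum_j\epsilon_j N_{g_j}x_j\|$ before the operators $N_{g_j}\in B(X)$ have been constructed; the lemma only assumes conditions (1) and (2) of Definition \ref{Def R[E]-bounded} for $(N(t))$, so the existence of $N_g\in B(X)$ is part of what must be \emph{proved}, not assumed, and the norm-convergence observation is what supplies it. Second, once one has norm convergence, the $R$-bound estimate is immediate by the triangle inequality on finite sums $\E\|\sum_j\epsilon_j S_m x_j\| \leq \sum_{n\leq m} R[E](\varphi_n N)\,\E\|\sum_j\epsilon_j x_j\|$ followed by passing to the norm limit $m\to\infty$; the detour through weak lower semicontinuity of the norm and Fatou's lemma is not needed. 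Your closing remark that ``$\sum_n(\varphi_n N)_g x$ need not converge in norm, only weakly'' is simply false under the hypothesis $c<\infty$.

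A secondary gap: you invoke $|g_j\spr{N(\cdot)x_j}{x'}|\in L^1(\Omega)$ as the dominating function. This is exactly condition (3) of Definition \ref{Def R[E]-bounded} for the family $(N(t))$, which the lemma does \emph{not} assume. It must be derived. For $E = L^p(\Omega,w\,d\mu)$ one gets it from the finiteness of $c$ via the sign trick: with $\tilde g = |g|\,\overline{\sgn\spr{N(\cdot)x}{x'}}$ one has $\|\tilde g\|_{L^p}=\|g\|_{L^p}$ and, by monotone convergence and condition (3) for each $\varphi_n N$,
\[
\int_\Omega |g|\,|\spr{N(\cdot)x}{x'}|\,d\mu = \sum_n \int_\Omega \tilde g\,\varphi_n\,\spr{N(\cdot)x}{x'}\,d\mu
\leq \sum_n R[E](\varphi_n N)\,\|g\|_E\,\|x\|\,\|x'\| < \infty.
\]
For the non-rearrangement-invariant choices of $E$ in \eqref{Equ Function space E} (such as $\Soa$ or $\Sea$) the sign trick is not available and this step does not come for free; that is a genuine restriction one should flag, though in the paper's applications part (3) is only used with $E=L^2$.

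In short: correct conclusion, but the mechanism you highlight (Fatou plus weak lower semicontinuity in lieu of norm convergence) is the opposite of what is going on, and the $L^1$-domination and the existence of $N_g\in B(X)$ both need to be derived rather than assumed.
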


We turn to applications to the functional calculus.
That is, the $R$-bounded functional calculus yields $R[L^2]$-bounded sets by the following proposition.
Here we may and do always choose the dense subset $D_N = D_A,$ the calculus core from \eqref{Equ D}.

\begin{defi}\label{Def R-bdd Matr R-bdd calculus}
Let $A$ be a $0$-sectorial operator.
Let $E \in \{\Ha,\Sea\}$.
We say that $A$ has an $R$-bounded $E$ calculus if $A$ has an $E$ calculus, which is an $R$-bounded mapping in the sense of \cite[Definition 2.7]{KrLM}, i.e.
\[ R(\{ f(A):\: \|f\|_E \leq 1\}) < \infty.\]
\end{defi}

In the next proposition we need the Mellin transform
\begin{equation}\label{equ Mellin transform}
M : L^2(\R_+,ds/s) \to L^2(\R,dt), f \mapsto (t \mapsto \int_0^\infty s^{it} f(s) ds/s) 
\end{equation}
which is an isometry.

\begin{prop}\label{Prop General Averaged}
Let $A$ be a $0$-sectorial operator having an $R$-bounded $\Sobexp^\alpha_2$ calculus for some $\alpha > \frac12.$
Let $\phi \in \Sobolev^\alpha_{2,\loc}(\R_+)$ such that $t \mapsto M \phi (t) \langle t \rangle^\alpha$ belongs to $L^\infty(\R),$
where $M$ denotes the Mellin transform.
Then $(\phi(tA) :\: t > 0)$ is $R[L^2(\R_+,\frac{dt}{t})]$-bounded with bound $\leq C \| M \phi(t) \langle t \rangle^\alpha \|_\infty.$
\end{prop}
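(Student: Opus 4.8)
The plan is to exhibit $(\phi(tA))_{t>0}$ as the image, under a bounded linear map acting only on the parameter variable, of the family of rescaled imaginary powers $(\langle\nu\rangle^{-\alpha}A^{i\nu})_{\nu\in\R}$, and then to invoke the transformation principle of Lemma~\ref{Lem Transformation R[E]-bounded}. Throughout, $\phi(tA)$ is understood in the sense of the extended $\Sobexp^\alpha_{2,\loc}$ calculus (Definition~\ref{Def Unbounded Besov or Sobolev calculus}) and $D_A$ is the calculus core from \eqref{Equ D}.

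First I would record that $(\langle\nu\rangle^{-\alpha}A^{i\nu})_{\nu\in\R}$ is $R[L^2(\R,d\nu)]$-bounded. Since $A$ has a bounded $\Sea$ calculus, Lemma~\ref{Lem Sobolev calculus} provides \eqref{Equ group weak L2}, i.e. $\|\langle\nu\rangle^{-\alpha}\spr{A^{i\nu}x}{x'}\|_{L^2(\R)}\lesssim\|x\|\,\|x'\|$, together with $\spr{f(A)x}{x'}=\frac{1}{2\pi}\int_\R\widehat{f_e}(\nu)\spr{A^{i\nu}x}{x'}\,d\nu$ for $f\in\Sea$. As $f\mapsto\frac{1}{2\pi}\widehat{f_e}\langle\cdot\rangle^{\alpha}$ is a norm-equivalent linear bijection $\Sea\to L^2(\R)$, the integrated operators $N_gx=\int_\R g(\nu)\langle\nu\rangle^{-\alpha}A^{i\nu}x\,d\nu$ satisfy $\{N_g:\|g\|_{L^2(\R)}\le1\}\subseteq\{f(A):\|f\|_{\Sea}\le c_\alpha\}$ for some constant $c_\alpha$, and the latter set is $R$-bounded by hypothesis; hence $(\langle\nu\rangle^{-\alpha}A^{i\nu})_{\nu\in\R}$ is $R[L^2(\R)]$-bounded with bound $\le c_\alpha R(\{f(A):\|f\|_{\Sea}\le1\})$. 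As common dense core one takes $D_A$, using that $\lambda^{i\nu}\in\Hol(\Sigma_\omega)\subset\Sobexp^\alpha_{2,\loc}$, so that $A^{i\nu}$ is the associated closed operator and $D_A$ is a core for it by Lemma~\ref{Lem Soaloc calculus}.

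Next, put $m(\nu)=M\phi(\nu)\langle\nu\rangle^{\alpha}\in L^\infty(\R)$ and let $K=M^{-1}\circ(\text{multiplication by }m)$, regarded as a bounded operator $L^2(\R,d\nu)\to L^2(\R_+,\tfrac{dt}t)$ of norm $\lesssim\|M\phi(\cdot)\langle\cdot\rangle^{\alpha}\|_\infty$ (here $M$ is the Mellin transform, an $L^2$-isometry up to a universal constant); since $L^2(\R)$ and $L^2(\R_+,\tfrac{dt}t)$ are their own duals, the condition on $K'$ in Lemma~\ref{Lem Transformation R[E]-bounded} is automatic. The heart of the proof is the identity
\begin{equation}\label{eq:plan-key}
\spr{\phi(tA)x}{x'}=K\bigl(\nu\mapsto\langle\nu\rangle^{-\alpha}\spr{A^{i\nu}x}{x'}\bigr)(t)\qquad(x\in D_A,\ x'\in X',\ t>0),
\end{equation}
equivalently, that the Mellin transform in $t$ of $t\mapsto\spr{\phi(tA)x}{x'}$ is $\nu\mapsto M\phi(\nu)\spr{A^{i\nu}x}{x'}$; the right-hand side of \eqref{eq:plan-key} is well defined since $\langle\cdot\rangle^{-\alpha}\spr{A^{i\cdot}x}{x'}\in L^2(\R)$ and $m\in L^\infty$. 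Formally \eqref{eq:plan-key} follows from $\int_0^\infty t^{i\nu}\phi(t\lambda)\tfrac{dt}t=\lambda^{-i\nu}M\phi(\nu)$ and the functional calculus, just as in the proofs of Propositions~\ref{Prop Wave BIP Function} and \ref{Prop Wave variant}. The delicate point is that $\phi(t\cdot)$ need not lie in $\Sea$, so Lemma~\ref{Lem Sobolev calculus} cannot be applied to it directly; instead one works through the extended $\Sobexp^\alpha_{2,\loc}$ calculus. For $x\in D_A$ choose $N$ with $\dyad_n(A)x=0$ for $|n|\ge N$ and put $\eta=\sum_{|n|\le N+1}\dyad_n\in C^\infty_c(\R_+)$; then $\eta(A)x=x$ by \eqref{Equ Convergence Bes} and $\phi(tA)x=(\phi(t\cdot)\eta)(A)x$ with $\phi(t\cdot)\eta\in\Sea$. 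Applying Lemma~\ref{Lem Sobolev calculus} to $\phi(t\cdot)\eta$ reduces \eqref{eq:plan-key} to deleting the localiser $\eta$; this is legitimate because $G:=\spr{A^{i\cdot}x}{x'}$ obeys a reproducing relation (got by applying the representation formula to $\lambda^{i\nu}\eta$ and using $\eta(A)x=x$) which forces the compactly supported distribution $\widehat G$ to be supported where $\eta_e\equiv1$, on which $\phi(t\cdot)\eta$ and $\phi(t\cdot)$ coincide; a Parseval/approximation argument, with the technicalities of \cite[Proposition 4.40]{Kr}, then yields \eqref{eq:plan-key}.

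With \eqref{eq:plan-key} in hand, Lemma~\ref{Lem Transformation R[E]-bounded}, applied with $\Omega_1=\R$, $E_1=L^2(\R)$, $\Omega_2=\R_+$ carrying $\tfrac{dt}t$, $E_2=L^2(\R_+,\tfrac{dt}t)$, the family $(\langle\nu\rangle^{-\alpha}A^{i\nu})_{\nu\in\R}$, the transformed family $(\phi(tA))_{t>0}$, common core $D_A$ and the operator $K$ above (strong measurability of $t\mapsto\phi(tA)x$ on $D_A$ being routine), gives
\[R\bigl[L^2(\R_+,\tfrac{dt}t)\bigr]\bigl(\phi(tA):t>0\bigr)\le\|K\|\cdot R\bigl[L^2(\R)\bigr]\bigl(\langle\nu\rangle^{-\alpha}A^{i\nu}:\nu\in\R\bigr)\le C\,\|M\phi(\cdot)\langle\cdot\rangle^{\alpha}\|_\infty,\]
which is the assertion. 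The main obstacle is \eqref{eq:plan-key}: because $\phi(t\cdot)$ need not belong to $\Sea$ one is forced into the unbounded $\Sobexp^\alpha_{2,\loc}$ calculus, and the mechanism for discarding the localiser $\eta$ is precisely the band-limitedness of $\nu\mapsto\spr{A^{i\nu}x}{x'}$ for $x\in D_A$ noted above.
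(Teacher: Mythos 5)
Your proof is correct and runs on the same engine as the paper's: translate via the logarithm, use the $\Sea$ representation formula (Lemma~\ref{Lem Sobolev calculus}) and a localiser to handle $x\in D_A$, then Fubini/Plancherel to land on the bound $\| M\phi(\cdot)\langle\cdot\rangle^\alpha\|_\infty$. The organizational difference is that you factor through Lemma~\ref{Lem Transformation R[E]-bounded}: first extract the $R[L^2(\R)]$-boundedness of $(\langle\nu\rangle^{-\alpha}A^{i\nu})_\nu$ from the $R$-bounded $\Sea$ calculus, then transfer by the (isometry $\circ$ multiplier) operator $K$. The paper instead never separates out the imaginary powers; it proves directly the integrated identity $\int_\R h(-t)\,\phi_e(t+\log A)x\,dt = ((h*\phi_e)\circ\log)(A)x$ for compactly supported $h\in L^1\cap L^2$ and $x\in D_A$, then concludes from $\|(h*\phi_e)\circ\log\|_{\Sea}\le\|\hat\phi_e\langle\cdot\rangle^\alpha\|_\infty\|h\|_2$ and the $R$-bounded $\Sea$ calculus alone. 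This avoids the pointwise Mellin identity altogether, and — the point worth noting — it also avoids the distributional ``band-limitedness'' argument you use to remove the localiser $\eta$: the paper inserts a \emph{second} cut-off $\psi$, chosen so that $\psi\equiv1$ on $\supp\psi_0-\supp h$ (hence $\psi$ depends on $h$), so that $\phi_e\psi\in\Sobexp^\alpha_2$ and the whole computation stays inside the bounded $\Sea$ calculus with Fubini as the only analytic ingredient. Your support-of-$\widehat G$ argument is correct in substance (it is essentially the statement that $G*\widehat{\eta_e}=cG$ forces $\widehat G$ to live where $\eta_e(-\cdot)=1$), but it is heavier machinery than the proof actually needs; the paper's $h$-dependent $\psi$ is the cleaner device. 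The final numerical step, $\|\phi_e*h\|_{\Sobexp^\alpha_2}\le\|\hat\phi_e\langle\cdot\rangle^\alpha\|_\infty\|h\|_2$, is exactly your $\|K\|\lesssim\|M\phi\,\langle\cdot\rangle^\alpha\|_\infty$ in Fourier disguise, so the two proofs yield the same constant.
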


\begin{proof}
Since $\phi_e(t+\log(s)) = \phi(se^{t}),$ we have to show that $(\phi_e(t + \log(A)) : \: t \in \R)$ is $R[L^2(\R)]$-bounded with the stated bound.
Then the proposition follows using the isometry $L^2(\R_+,\frac{dt}{t}) \to L^2(\R),\: f \mapsto f(e^{(\cdot)})$ and Lemma \ref{Lem Transformation R[E]-bounded}.
For $h \in L^2(\R)\cap L^1(\R)$ with, say, compact support, we have
\begin{equation}\label{Equ Convolution}
\int_\R h(-t) \phi_e(t + \log(A))x dt = (h \ast \phi_e)\circ \log (A) x \quad (x \in D_A).
\end{equation}
Indeed, for fixed $x \in D_A,$ there exists $\psi_0 \in C^\infty_c(\R)$ such that $\psi_0 \circ \log(A)x = x.$
Choose some $\psi \in C^\infty_c(\R)$ such that $\psi(r) = 1$ for $r \in \supp \psi_0 - \supp h,$ so that $\psi(t+\log(A))x = \psi(t+\log(A))\psi_0\circ \log(A)x = \psi_0\circ\log(A)x = x$ for any $-t \in \supp h.$
Then for any $x' \in X',$
\begin{align*}
\int_\R h(-t) \spr{\phi_e(t + \log(A))x}{x'} dt & = \int_\R h(-t) \spr{(\phi_e \psi)(t+\log(A))x}{x'} dt \\
& = \int_\R h(-t) \frac{1}{2 \pi} \int_\R (\phi_e \psi)\hat{\phantom{i}}(s) e^{ist} \spr{A^{is} x}{x'} ds dt \\
& = \frac{1}{2 \pi} \int_\R \left( \int_\R h(-t) e^{ist} (\phi_e \psi)\hat{\phantom{i}}(s) dt \right) \spr{A^{is} x}{x'} ds \\
& = \frac{1}{2 \pi} \int_\R \hat{h}(s) (\phi_e \psi)\hat{\phantom{i}}(s) \spr{A^{is} x}{x'} ds \\
& = \spr{(h \ast (\phi_e\psi))\circ\log(A) x}{x'}.
\end{align*}
where we used $h \in L^1(\R),\: \phi_e \psi \in \Sobolev^\alpha_2$ and $s \mapsto \langle s \rangle^{-\alpha} \spr{A^{is} x}{x'} \in L^2(\R),$ to apply Fubini in the third line, and Lemma \ref{Lem Sobolev calculus} in the last step.
We also have $(h\ast (\phi_e \psi))\psi_0 = (h \ast \phi_e)\psi_0$ and \eqref{Equ Convolution} follows.
Then the claim follows from $\|\phi_e \ast h \|_{\Sobolev^\alpha_2} \leq \| \hat{\phi_e}(t) \langle t \rangle^\alpha \|_{L^\infty(\R)} \|h\|_{L^2(\R)}$ and density of the above $h$ in $L^2(\R).$
\end{proof}

\begin{exa}
Consider $\phi(t) = t^{\beta} (e^{i \theta} - t)^{-1},$ where $\beta \in (0,1)$ and $|\theta| < \pi$ and $A$ an operator as in the proposition above.
Then $\phi(tA) = t^\beta A^\beta (e^{i\theta} - tA)^{-1} = t^{\beta - 1} A^{\beta} (e^{i\theta} t^{-1} - A)^{-1}$ is an $R[L^2(\frac{dt}{t})]$-bounded family with bound $\lesssim \theta^{-\alpha}.$
Indeed, $M \phi(t) = e^{i (\theta-\pi)(it + \beta - 1)} \frac{\pi}{\sin \pi (it + \beta - 1)}.$
As $|\sin\pi( it + \beta - 1 )| \cong \cosh(\pi t)$ for fixed $\beta,$ we have
$|M\phi(t) \langle t \rangle^\alpha| \lesssim e^{-(\theta - \pi)|t|} \langle t \rangle^\alpha \frac{1}{\cosh(\pi t)} \lesssim \theta^{-\alpha}.$
\end{exa}

Theorem \ref{Thm Characterizations Hoermander calculus} will show that a converse to Proposition \ref{Prop General Averaged} holds,
for many classical operator families including the above example, i.e. one can recover the $R$-bounded $\Sea$ calculus from averaged $R$-boundedness conditions.

\section{Main Results}\label{Sec Main Results}

We introduced the notion of $R[E]$-boundedness to give the following characterization of ($R$-bounded) $\Sea$ calculus.

\begin{thm}\label{Thm Characterizations Hoermander calculus}
Let $A$ be a $0$-sectorial operator on a Banach space $X$ with a bounded $\HI(\Sigma_\omega)$ calculus for some $\omega \in (0,\pi).$
Let $\alpha > \frac12.$
Consider the following conditions.\\

\noindent
\textit{Sobolev Calculus}
\begin{enumerate}
\item\label{it Wa c}$A$ has an $R$-bounded $\Sobexp^\alpha_2$ calculus.
\end{enumerate}
\textit{Imaginary powers}
\begin{enumerate}
\setcounter{enumi}{1}
\item\label{it group}$( \tma A^{it}:\:t \in \R)$ is $R[L^2(\R)]$-bounded.
\end{enumerate}
\textit{Resolvents}
\begin{enumerate}
\setcounter{enumi}{2}
\item\label{it Res A growth}For some/all $\beta \in (0,1)$ there exists $C>0$ such that for all $\theta \in (-\pi,\pi)\backslash\{0\}:\:
R[L^2(\R_+,dt/t)](t^\beta A^{1 - \beta}R(e^{i\theta}t,A):\:t > 0) \leq C |\theta|^{-\alpha}.$
\item\label{it Res A growth variant}For some/all $\beta \in (0,1)$ and $\theta_0 \in (0,\pi],$
$ ( |\theta|^{\alpha-\frac12} t^\beta A^{1 - \beta} R(e^{i\theta}t,A):\: 0 < |\theta| \leq \theta_0,\, t > 0 )$
is $R[L^2((0,\infty) \times [-\theta_0,\theta_0]\backslash \{0\},dt/t d\theta)]$-bounded.
\end{enumerate}
\textit{Analytic Semigroup} ($T(z) = e^{-zA}$)
\begin{enumerate}
\setcounter{enumi}{4}
\item\label{it Sgr growth}There exists $C>0$ such that for all $\theta \in (-\frac{\pi}{2},\frac{\pi}{2}):\:
R[L^2(\R_+)](A^{1/2}T(e^{i\theta}t):\:t>0) \leq C (\frac{\pi}{2}-|\theta|)^{-\alpha}.$
\item\label{it Sgr growth variant}$( \langle \frac{x}{y} \rangle^\alpha |x|^{-\frac12} A^{1/2} T(x+iy):\:x > 0,\,y \in \R)$ is  $R[L^2(\R_+\times \R)]$-bounded.
\end{enumerate}
\textit{Wave Operators}
\begin{enumerate}
\setcounter{enumi}{6}
\item\label{it Wave} $A$ has the auxiliary functional calculus $\Phi_A : \Sobexp^\gamma_2 \to B(D(\theta),X)$ from Section \ref{Sec Hormander classes} for some (possibly large) $\gamma > 0$ and $\theta > 0$ so that in particular, the operators $A^{-\alpha + \frac12} (e^{isA}-1)^m$ are densely defined for some $m > \alpha - \frac12.$
Assume moreover that $(|s|^{-\alpha}A^{-\alpha + \frac12}(e^{isA}-1)^m:\:s\in\R)$ is $R[L^2(\R)]$-bounded.
\item\label{it Wave variant}  $A$ has the auxiliary functional calculus $\Phi_A : \Sobexp^\gamma_2 \to B(D(\theta),X)$ from Section \ref{Sec Hormander classes} for some (possibly large) $\gamma > 0$ and $\theta > 0$ so that in particular, the operators $A^{\frac12 - \alpha} \left(e^{isA} - \sum_{j=0}^{m-1} \frac{(isA)^j}{j!} \right)$ are densely defined.
Assume moreover that
\[\left(A^{\frac12-\alpha} |s|^{-\alpha}\left(e^{isA} - \sum_{j=0}^{m-1} \frac{(isA)^j}{j!} \right):\:s \in \R\right)\]
 is $R[L^2(\R)]$-bounded.
\end{enumerate}
Then the following conditions are equivalent:
\[ \eqref{it Wa c}, \eqref{it group}, \eqref{it Res A growth variant}, \eqref{it Sgr growth variant}, \eqref{it Wave}.\]
The condition \eqref{it Wave variant} is also equivalent under the assumption that $\alpha - \frac12 \not\in \N_0$ and $m \in \N_0$ such that $\alpha - \frac12 \in (m,m+1).$ 

All these conditions imply the remaining ones \eqref{it Res A growth} and \eqref{it Sgr growth}.
If $X$ has property $(\alpha)$ then, conversely, these two conditions imply that $A$ has an $R$-bounded $\Sobexp^{\alpha + \epsilon}_2$ calculus for any $\epsilon > 0.$
\end{thm}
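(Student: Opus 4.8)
\emph{Strategy and the core equivalence.} Everything hinges on \eqref{it Wa c}$\Leftrightarrow$\eqref{it group}; once this is in hand I would propagate \eqref{it group} forward to \eqref{it Res A growth}--\eqref{it Wave variant} and then propagate one of the latter back. For \eqref{it Wa c}$\Leftrightarrow$\eqref{it group}: if \eqref{it group} holds, Proposition~\ref{Prop General E} (with $E=L^2(\R)$, $p=p'=2$) yields the weak estimate \eqref{Equ group weak L2}, so Lemma~\ref{Lem Sobolev calculus} gives both the $\Sea$ calculus and the representation $\spr{f(A)x}{x'}=\frac1{2\pi}\int_\R(f_e)\hat{\phantom{i}}(t)\spr{A^{it}x}{x'}\,dt$. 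Writing $N(t)=\tma A^{it}$ and $h(t)=\frac1{2\pi}(f_e)\hat{\phantom{i}}(-t)\ta$, Plancherel gives $\norm{h}_{L^2(\R)}\cong\norm{f}_{\Sea}$ and $f(A)=N_h$ in the sense of Definition~\ref{Def R[E]-bounded}; since $f\mapsto h$ is a linear isomorphism $\Sea\to L^2(\R)$, the sets $\{f(A):\norm{f}_{\Sea}\le 1\}$ and $\{N_h:\norm{h}_{L^2}\le c\}$ agree up to scaling, so the $R$-boundedness of one is equivalent to that of the other. The reverse implication uses the same identity.

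\emph{From \eqref{it group} to the operator families.} Each resolvent, semigroup, or wave family is, after rescaling, of the form $\phi_\theta(tA)$ for an explicit $\phi_\theta\in\Soaloc(\R_+)$. For the resolvents and semigroup one computes the Mellin transform (as in the example after Proposition~\ref{Prop General Averaged}, where $M\phi_\theta(t)=(-e^{i\theta})^{it+\beta-1}\tfrac{\pi}{\sin\pi(it+\beta-1)}$, with the analogous $\Gamma$-expression for $e^{-zA}$) and checks $\norm{M\phi_\theta(\cdot)\ta}_\infty\lesssim|\theta|^{-\alpha}$, resp.\ $(\tfrac\pi2-|\theta|)^{-\alpha}$; Proposition~\ref{Prop General Averaged} then gives \eqref{it Res A growth} and \eqref{it Sgr growth}. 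The two-parameter versions \eqref{it Res A growth variant} and \eqref{it Sgr growth variant} follow from the evident two-variable form of Proposition~\ref{Prop General Averaged} applied to the joint Mellin multiplier in $(t,\theta)$ (resp.\ $(x,y)$); here the weight $|\theta|^{\alpha-\frac12}$ is exactly the one making $\int_0^{\theta_0}|\theta|^{2\alpha-1}|M\phi_\theta(\tau)|^2\,d\theta\cong\langle\tau\rangle^{-2\alpha}$, which cancels the $\langle\tau\rangle^{2\alpha}$ coming from the $\Sea$-norm. For the wave operators, $\ta\,h_\mp(t)$ is bounded (Proposition~\ref{Prop Wave BIP Function}), so Lemma~\ref{Lem R[E]-bounded technical}(1) upgrades \eqref{it group} to $R[L^2(\R)]$-boundedness of $h_\mp(t)A^{-it}$; transporting along the unitary Mellin transform via Lemma~\ref{Lem Transformation R[E]-bounded} (using the identity of Proposition~\ref{Prop Wave BIP Function}) and rewriting via Lemma~\ref{Lem R[E]-bounded technical}(2) gives \eqref{it Wave}; \eqref{it Wave variant} is identical, using Proposition~\ref{Prop Wave variant}.

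\emph{Closing the cycle; the main obstacle.} One needs one of \eqref{it Res A growth variant}, \eqref{it Sgr growth variant}, \eqref{it Wave}, \eqref{it Wave variant} to imply \eqref{it group}. The most robust is \eqref{it Res A growth variant}$\Rightarrow$\eqref{it group}: run the Mellin computation backwards, using that for $\beta\in(0,1)$ the factor $\tfrac{\pi}{\sin\pi(it+\beta-1)}$ is bounded above and below on $\R$ (since $|\sin\pi(it+\beta-1)|^2=\sin^2\pi\beta+\sinh^2\pi t$), to reconstruct $\tma A^{-it}$ tested against $L^2(\R)$ from $|\theta|^{\alpha-\frac12}\phi_\theta(tA)$ tested against $L^2(\tfrac{dt}t\,d\theta)$ by an inverse Mellin transform in $t$ combined with the $\theta$-integration; Lemma~\ref{Lem Transformation R[E]-bounded} controls the bounded intertwining operator, and the weight $|\theta|^{\alpha-\frac12}$ is used decisively. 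The same works for \eqref{it Sgr growth variant}, and \eqref{it Wave variant}$\Rightarrow$\eqref{it group} is still easier, since the multiplier in Proposition~\ref{Prop Wave variant} is the never-vanishing $i^{-\alpha+\frac12+it}\Gamma(-\alpha+\tfrac12+it)$, so combining the two orientations $s\gtrless 0$ recovers $\tma A^{-it}$ after multiplication by bounded functions. \textbf{The main difficulty is \eqref{it Wave}$\Rightarrow$\eqref{it group}}: here the Mellin multiplier from \eqref{Equ h_mp} is $h_\mp(t)=e^{\mp i\frac\pi2(\frac12-\alpha)}e^{\pm\frac\pi2 t}\,\Gamma(\tfrac12-\alpha+it)\,f_m(\tfrac12-\alpha+it)$, and the Dirichlet polynomial $f_m$ of \eqref{Equ f_m} need not be bounded below --- and may even vanish --- on $\Re z=\tfrac12-\alpha$ for particular pairs $(\alpha,m)$, so one cannot simply divide by $h_\mp$. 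I would circumvent this by not reconstructing $A^{it}$ directly from \eqref{it Wave}, but instead first deriving from \eqref{it Wave} an estimate of resolvent type --- e.g.\ via a regularised Laplace/contour representation of $A^{1-\beta}R(e^{i\theta}t,A)$ by averages of $(e^{isA}-1)^m$, or by first upgrading to a larger multiplicity $m'$ for which $f_{m'}$ is non-degenerate on the critical line --- and then invoking \eqref{it Res A growth variant}$\Rightarrow$\eqref{it group}, which has no such degeneracy and covers the exceptional case $\alpha-\tfrac12\in\N_0$ where \eqref{it Wave variant} is unavailable.

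\emph{The weaker conditions \eqref{it Res A growth}, \eqref{it Sgr growth} and the property-$(\alpha)$ converse.} Both follow from \eqref{it group} as in the second paragraph; but with only the pointwise bounds $|\theta|^{-\alpha}$, resp.\ $(\tfrac\pi2-|\theta|)^{-\alpha}$ --- which lose exactly the needed smoothness when integrated in $\theta$ --- they are not expected to return to \eqref{it group} on a general Banach space. When $X$ has property $(\alpha)$ one upgrades them to the two-parameter conditions \eqref{it Res A growth variant}, \eqref{it Sgr growth variant} at the price of an arbitrarily small $\epsilon>0$ in the smoothness exponent, by the square-function/$\ell(\Omega,X)$ machinery of Example~\ref{Exa R[E]-bounded}(c) (Fubini in the $(\alpha)$-framework); combining this with the closing of the cycle applied to $\alpha+\epsilon$ in place of $\alpha$ yields the $R$-bounded $\Sobexp^{\alpha+\epsilon}_2$ calculus.
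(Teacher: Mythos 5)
Your overall architecture matches the paper's: establish \eqref{it Wa c}$\Leftrightarrow$\eqref{it group} via the representation formula \eqref{Equ Soa calculus formula}, transport \eqref{it group} to the other families by Mellin/Fourier identities and Lemma~\ref{Lem Transformation R[E]-bounded}, and close the cycle through one of the two-parameter conditions. Most of this is correct in spirit and agrees with the paper, though you should note a small inaccuracy in the claim that $\pi/\sin\pi(\beta-1+it)$ is ``bounded above and below on $\R$'': since $|\sin\pi(\beta+it)|^2 = \sin^2\pi\beta + \sinh^2\pi t$, the factor is bounded \emph{above} but decays like $e^{-\pi|t|}$, not bounded below. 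In the paper this is precisely compensated by the $\theta$-integral $\int_{-\pi}^\pi(\pi-|\theta|)^{2\alpha-1}e^{2\theta s}\,d\theta \cong \cosh^2(\pi s)\,|s|^{-2\alpha}$, which both cancels the $\cosh^2(\pi s)$ from $|\sin\pi(\beta+is)|^{-2}$ and produces the weight $|s|^{-2\alpha}$; that cancellation, not pointwise boundedness, is what makes $K$ in \eqref{Equ Proof Charact K} an isomorphic embedding. Your heuristic for the role of $|\theta|^{\alpha-1/2}$ is right, but the pointwise claim is wrong and hides the actual mechanism.

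The genuine gap is exactly where you flag it: \eqref{it Wave}$\Rightarrow$\eqref{it group}. You correctly observe that the Dirichlet polynomial $f_m(\tfrac12-\alpha+it)$ from \eqref{Equ f_m} can vanish on the critical line, so one cannot simply divide in the Mellin identity of Proposition~\ref{Prop Wave BIP Function}. Neither of your proposed circumventions is viable as stated. Expressing $A^{1-\beta}R(e^{i\theta}t,A)$ as an average of $(e^{isA}-1)^m$ via a Laplace/contour representation amounts to inverting the linear combination $\sum_k\binom{m}{k}(-1)^{m-k}e^{iksA}$ back to a single wave factor, which is the same obstruction in disguise. Upgrading the multiplicity to $m'>m$ requires showing \eqref{it Wave}$_{m}\Rightarrow$\eqref{it Wave}$_{m'}$, but $(e^{isA}-1)^{m'}=(e^{isA}-1)^{m'-m}(e^{isA}-1)^m$ and the extra factor is an unbounded operator (the wave group is unbounded outside Hilbert space), so Lemma~\ref{Lem R[E]-bounded technical}~(1) does not apply, and in any case $f_{m'}$ need not be zero-free either. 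The paper's resolution is Lemma~\ref{Lem Wave Technical 3}: although $f_m(\tfrac12-\alpha+i\cdot)$ can vanish, it cannot vanish on intervals of bounded length, so $\sum_{k=-N}^N|f_m(\tfrac12-\alpha+i(t+k\delta))|\gtrsim1$ uniformly for suitable $N,\delta$. One then writes
\[
\sum_{k=-N}^N f(t+k\delta)\,\tma A^{-it}
= \sum_{k=-N}^N \Bigl[\tfrac{\langle t+k\delta\rangle^\alpha}{\langle t\rangle^\alpha}A^{-ik\delta}\Bigr]
\Bigl[f(t+k\delta)\,\langle t+k\delta\rangle^{-\alpha}A^{i(t+k\delta)}\Bigr],
\]
observes that the bracketed translation factors are uniformly bounded operators (because $A$ has a bounded $\HI(\Sigma_\omega)$ calculus, so $A^{-ik\delta}\in B(X)$ uniformly for $|k|\le N$), and applies Lemma~\ref{Lem R[E]-bounded technical}~(1) once more. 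This sum-of-translates device is the missing idea; without it your cycle does not close through \eqref{it Wave}, and since \eqref{it Wave} (unlike \eqref{it Wave variant}) is one of the conditions asserted to be unconditionally equivalent, the proof is incomplete at precisely the step the theorem was designed to cover.
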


As a preparatory lemma for the proof of Theorem \ref{Thm Characterizations Hoermander calculus}, we state

\begin{lem}\label{Lem Wave Technical 3}
Let $\beta \in \R$ and $f(t) = f_m(\beta + it)$ with $f_m$ as in \eqref{Equ f_m}.
Then there exist $C,\epsilon,\delta > 0$ such that for any interval $I \subset \R$ with $|I| \geq C$ there is a subinterval $J \subset I$
with $|J| \geq \delta$ so that $|f(t)| \geq \epsilon$ for $t \in J.$
Consequently, for $N > C/\delta,$
\[ \sum_{k=-N}^N |f(t+k\delta)| \gtrsim 1 \quad (t \in J).\]
\end{lem}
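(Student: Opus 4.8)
The statement to be proved is Lemma \ref{Lem Wave Technical 3}: for $f(t) = f_m(\beta+it) = \sum_{k=1}^m \binom{m}{k}(-1)^{m-k} k^{-\beta-it}$, we must locate, inside any sufficiently long interval, a subinterval of fixed length on which $|f|$ is bounded below. The function $f$ is, after factoring out $k^{-\beta}$, a finite Dirichlet polynomial $t \mapsto \sum_k a_k k^{-it}$ with fixed nonzero coefficients $a_k = \binom{m}{k}(-1)^{m-k}k^{-\beta}$. Such a function is \emph{almost periodic} in the sense of Bohr. The plan is to exploit almost periodicity: I would first observe that $f$ is not identically zero (e.g.\ because $\sum_{k=1}^m \binom{m}{k}(-1)^{m-k}k^{-\beta-it}\,k^{it}$ has distinct frequencies $\log k$, or simply because $f_m(z)$ has isolated zeros, being a nonconstant entire-type function of $z$ — indeed $f_m(z)z \to$ something nonzero, and more concretely $f_m$ is a nonzero combination of the linearly independent exponentials $k^{-z}$). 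Hence there is a point $t_0$ and a value $\epsilon_0>0$ with $|f(t_0)| > 2\epsilon_0$. Since $f$ is uniformly continuous (the derivative $f'(t) = -i\sum_k a_k (\log k) k^{-it}$ is bounded), there is $\delta_0 > 0$ with $|f(t)| > \epsilon_0$ for $|t - t_0| < \delta_0$.

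Next I would invoke Bohr's theorem that the set of $\epsilon_0$-almost-periods of $f$ is \emph{relatively dense}: there is a length $L = L(\epsilon_0)$ such that every interval of length $L$ contains a number $\tau$ with $\sup_t |f(t+\tau) - f(t)| < \epsilon_0$. Given any interval $I$ with $|I| \geq L + 2\delta_0$, choose such an almost-period $\tau$ lying in the sub-interval of $I$ of length $L$ centered appropriately (specifically so that $[t_0 + \tau - \delta_0, t_0+\tau+\delta_0] \subset I$; shifting $I$ so its relevant window catches $\tau$ near $t_0$ shifted by an element of $\tau + L\Z$ — more carefully, one picks $\tau$ from the relatively dense set so that $t_0 + \tau$ sits inside $I$ at distance $\geq \delta_0$ from both endpoints, which is possible once $|I| \geq L + 2\delta_0$). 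Then for $|t - (t_0+\tau)| < \delta_0$ we get $|f(t)| \geq |f(t-\tau)| - |f(t) - f(t-\tau)| > \epsilon_0 - \epsilon_0$... so I should instead run the estimate with $2\epsilon_0$ margin: start from $|f(t_0)| > 4\epsilon$, get $|f| > 2\epsilon$ on a $\delta$-ball around $t_0$, use $\epsilon$-almost-periods, and conclude $|f| > \epsilon$ on the shifted $\delta$-ball. Setting $C = L + 2\delta$ and $J = (t_0 + \tau - \delta, t_0 + \tau + \delta) \cap I$ (of length $\geq \delta$) gives the first assertion.

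For the final display, given $N > C/\delta$, partition any interval $I$ of length $N\delta > C$ into the $N$ consecutive subintervals $[k\delta, (k+1)\delta)$; more to the point, for a fixed $t$ consider $I = (t - N\delta, t + N\delta)$, apply the first part to get $J \subset I$ of length $\delta$ with $|f| \geq \epsilon$ on $J$; since $J$ has length $\delta$ it contains a point of the form $t + k\delta$ with $|k| \leq N$, whence $\sum_{k=-N}^N |f(t+k\delta)| \geq |f(t+k_0\delta)| \geq \epsilon \gtrsim 1$. (One may need to take $2N$ or adjust endpoints to be safe, but this is a cosmetic fix.) The main obstacle is purely the bookkeeping in the second paragraph: ensuring the almost-period $\tau$ can be chosen so that the good $\delta$-window $t_0 + \tau \pm \delta$ genuinely lands \emph{inside} the prescribed interval $I$ (not merely somewhere on the line), which forces the threshold $C$ to absorb both $L(\epsilon)$ and the window width $2\delta$; everything else is a routine application of Bohr almost-periodicity together with uniform continuity of the Dirichlet polynomial.
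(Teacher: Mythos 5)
Your proposal is correct, but it is a genuinely different argument from the paper's. The paper reduces the claim (via bounded $f'$) to showing that $|f|\geq\epsilon$ somewhere on every interval of length $\geq C$, then argues by contradiction: if $\|f\|_{L^\infty(I)}$ could be made arbitrarily small on arbitrarily long intervals $I$, the Landau--Kolmogorov inequality $\|f'\|_{L^\infty(I)}\leq\sqrt{8\|f\|_{L^\infty(I)}\|f''\|_{L^\infty(I)}}$ propagates this to all derivatives $f^{(n)}$; but $f^{(n)}(t)=\sum_{k=1}^m\alpha_k(-i\log k)^n e^{-it\log k}$ has $\inf_t|f^{(n)}(t)|\geq|\alpha_m|(\log m)^n-\sum_{k<m}|\alpha_k|(\log k)^n>0$ for $n$ large, since the top frequency $\log m$ dominates --- a contradiction. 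You instead invoke Bohr almost-periodicity of the finite Dirichlet polynomial: $f$ is nonzero at some $t_0$ and $\geq 2\epsilon$ on a $\delta$-ball there; the $\epsilon$-almost-periods are relatively dense with characteristic length $L$; taking $C=L+2\delta$, any interval $I$ of length $\geq C$ contains a translate $[t_0+\tau-\delta,t_0+\tau+\delta]$ on which $|f|\geq\epsilon$ by the almost-period estimate. Both proofs are sound. The paper's route is self-contained and elementary (just an interpolation inequality for derivatives plus an explicit frequency-domination estimate tailored to $f_m$), while yours is conceptually cleaner and would apply verbatim to any nonvanishing Bohr almost-periodic function, at the cost of importing Bohr's relative-density theorem. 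Your handling of the final display (placing a lattice point $t+k\delta$ with $|k|\leq N$ inside the good window $J$) and the bookkeeping ensuring the shifted window lands inside $I$ are both fine as sketched.
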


\begin{proof}
Suppose for a moment that
\begin{equation}\label{Equ Lem Wave Technical 3}
\exists\,C,\,\epsilon > 0\,\forall\,I\text{ interval with }|I|\geq C\,\exists\,t \in I:\: |f(t)| \geq \epsilon.
\end{equation}
It is easy to see that $\sup_{t\in\R}|f'(t)| < \infty,$ so that for such a $t$ and $|s-t|\leq \delta = \delta(\|f'\|_\infty,\epsilon),$
$|f(s)| \geq \epsilon/2.$
Thus the lemma follows from \eqref{Equ Lem Wave Technical 3} with $J = B(t,\delta/2).$

It remains to show \eqref{Equ Lem Wave Technical 3}.
Suppose that this is false.
Then
\begin{equation}\label{Equ 2 Lem Wave Technical 3}
\forall\,C,\,\epsilon >0\,\exists\,I\text{ interval with }|I| \geq C:\:\forall\,t \in I:\: |f(t)| < \epsilon.
\end{equation}
Since $f''$ is bounded and $\|f'\|_{L^\infty(I)} \leq \sqrt{4 \|f\|_{L^\infty(I)} \max(\|f''\|_{L^\infty(I)},1)},$ see \cite[p.115 Exercice 15]{Rud},
we deduce that \eqref{Equ 2 Lem Wave Technical 3} holds for $f'$ in place of $f,$
and successively also for $f^{(n)}$ for any $n.$
But there is some $n\in\N$ such that $\inf_{t \in \R}|f^{(n)}(t)| > 0.$
Indeed,
\[ f^{(n)}(t) = \sum_{k=1}^m \alpha_k (-i \log k)^n e^{-it\log k},\]
with $\alpha_k = \binom{m}{k} (-1)^{m-k} k^{-\beta} \neq 0,$ whence
\[|f^{(n)}(t)| \geq |\alpha_m| \, |\log m|^n - \sum_{k=1}^{m-1}|\alpha_k|\,|\log k|^n > 0\]
for $n$ large enough.
This contradicts \eqref{Equ 2 Lem Wave Technical 3}, so that the lemma is proved.
\end{proof}

\begin{proof}[Proof of Theorem \ref{Thm Characterizations Hoermander calculus}]
~\\
\noindent
\eqref{it Wa c} $\Leftrightarrow$ \eqref{it group}:
By the $\Sea$ representation formula \eqref{Equ Soa calculus formula}, we have
$R[L^2(\R,dt)]( \tma A^{it}: \: t \in \R) = R\left(\left\{ \int_\R f(t) \tma A^{it} dt :\: \|f\|_{L^2(\R)} \leq 1 \right\} \right) = R\left(\left\{ 2 \pi f(A) :\: \|f\|_{\Sea} \leq  1 \right\} \right).$

The strategy to show the stated remaining (almost) equivalences between \eqref{it group} and \eqref{it Res A growth} -- \eqref{it Wave} consists more or less in finding an integral transform $K$ as in Lemma \ref{Lem Transformation R[E]-bounded} mapping the imaginary powers $A^{it}$ to resolvents, to the analytic semigroup and to the wave operators, and vice versa.

\noindent
\eqref{it group} $\Rightarrow$ \eqref{it Wave}:
By the above shown equivalence of \eqref{it Wa c} and \eqref{it group}, $A$ has an $R$-bounded $\Sea$ calculus, thus in particular an auxiliary calculus $\Phi_A : \Sobexp^\gamma_2 \to B(D(\theta),X)$ with $\gamma = \alpha$ and $\theta = 0.$
By Proposition \ref{Prop General E}, we clearly have that $\| t \mapsto \tma \spr{A^{it} x}{x'} \|_{L^2(\R,dt)} \leq C \| x \| \, \| x' \|,$ provided \eqref{it group} holds.
Thus, by Proposition \ref{Prop Wave BIP Function}, and Lemmas \ref{Lem Transformation R[E]-bounded} and \ref{Lem R[E]-bounded technical} (1), using the fact that the Mellin transform from \eqref{equ Mellin transform} is an isometry, \eqref{it Wave} follows.

\noindent
\eqref{it Wave} $\Rightarrow$ \eqref{it group}:
Recall the function $h_\mp$ from Proposition \ref{Prop Wave BIP Function}.
By the Euler Gamma function development \cite[p.~15]{Lebe}, we have the lower estimate
\[ |h_\mp(t)| \gtrsim |f_m(\frac12 - \alpha + it)| e^{\frac{\pi}{2}(\pm t - |t|)} \tma. \]
Thus, by Proposition \ref{Prop Wave BIP Function}, and Lemmas \ref{Lem Transformation R[E]-bounded} and \ref{Lem R[E]-bounded technical} (1),
\begin{equation}\label{Equ Proof Wave Op Converse}
 \left( \tma f_m(\frac12 - \alpha + it) A^{-it} : \: t \in \R \right) \text{ is }R[L^2(\R)]\text{-bounded.}
\end{equation}
To get rid of $f_m$ in this expression, we apply Lemma \ref{Lem Wave Technical 3}.
According to that lemma, we have $N \in \N$ and $\delta > 0$ such that
$\sum_{k = -N}^N |f(t + k \delta)| \gtrsim 1$ for any $t \in \R$ and $f(t) = f_m(\frac12 - \alpha + it).$
Write
\[ \sum_{k=-N}^N f(t+ k \delta) \tma  A^{-it} = \sum_{k=-N}^N \left[\frac{\langle t + k \delta \rangle^\alpha}{\langle t \rangle^\alpha} A^{ik\delta}\right]\left[ f(t+k \delta) \langle t + k \delta\rangle^{-\alpha} A^{-i(t+k\delta)}\right].\]
By \eqref{Equ Proof Wave Op Converse}, the term in the second brackets is $R[L^2(\R)]$-bounded.
The term in the first brackets is a bounded function times a bounded operator, due to the assumption that $A$ has a bounded $H^\infty(\Sigma_\omega)$ calculus.
Thus, the right hand side is $R[L^2(\R)]$-bounded, and so the left hand side is.
Now appeal once again to Lemma \ref{Lem R[E]-bounded technical} (1) to deduce \eqref{it group}.\\

\noindent
\eqref{it group} $\Rightarrow$ \eqref{it Res A growth}:
We fix $\theta \in (-\pi,\pi)$ and set
\[ K_\theta:  L^2(\R,ds) \to L^2(\R,ds),\, f(s) \mapsto (\pi - |\theta|)^{\alpha} \frac{1}{\sin\pi (\beta + is)}  e^{\theta s} \langle s \rangle^{\alpha} f(s). \]
We have
\[\sup_{|\theta| < \pi} \|K_\theta\| = \sup_{|\theta| < \pi,\,s\in\R} \langle s \rangle^\alpha (\pi-|\theta|)^\alpha
\frac{e^{\theta s}}{|\sin \pi (\beta + is)|} \lesssim \sup_{\theta,s} \langle s(\pi-|\theta|) \rangle^\alpha e^{-|s|(\pi-|\theta|)} < \infty.\]
In \cite[p. 228 and Theorem 15.18]{KuWe}, the following formula is derived for $x \in A(D(A^2))$ and $|\theta| < \pi:$
\begin{equation}\label{Equ Res A Mellin BIP} \frac{\pi}{\sin \pi(\beta + is)} e^{\theta s} A^{is} x = \int_0^\infty t^{is} \left[ t^\beta e^{i\theta \beta} A^{1- \beta} (e^{i\theta} t + A)^{-1} x \right] \frac{dt}{t}.
\end{equation}
Thus, with $R(\lambda,A) = (\lambda - A)^{-1},$
\begin{align}
\sup_{0 < |\theta| \leq \pi} |\theta|^{\alpha} R[L^2(\R_+,dt/t)](t^\beta A^{1-\beta}R(te^{i\theta},A))
& = \sup_{|\theta| < \pi} (\pi-|\theta|)^{\alpha} R[L^2(\R_+,dt/t)](t^{\beta} A^{1 - \beta} (e^{i\theta} t +A)^{-1})
\nonumber \\
& = \sup_{|\theta| < \pi} (\pi-|\theta|)^{\alpha} R[L^2(\R,ds)](\frac{\pi}{\sin\pi (\beta + is)}e^{\theta s}A^{is})
\label{Equ Proof Thm Charact} \\
& \lesssim R[L^2(\R,ds)] (\langle s \rangle^{-\alpha} A^{is}).
\nonumber
\end{align}

\noindent
Next we claim that for any $\epsilon > 0,$
\eqref{it Res A growth} implies \eqref{it group}, where in \eqref{it group}, $\alpha$ is replaced by $\alpha + \epsilon.$\\
First we consider $\langle s \rangle^{-(\alpha+\epsilon)}A^{is}x$ for $s\geq 1.$
By Lemma \ref{Lem R[E]-bounded technical} (3),
\begin{align}
R[L^2([1,\infty),ds)](\langle s \rangle^{-(\alpha+\epsilon)}A^{is})
& \leq \sum_{n = 0}^\infty R[L^2([2^n,2^{n+1}])](\langle s \rangle^{-\epsilon} \langle s \rangle^{-\alpha}A^{is}) \nonumber \\
& \leq \sum_{n = 0}^\infty 2^{-n\epsilon} R[L^2([2^n,2^{n+1}])] (\langle s \rangle^{-\alpha} A^{is}).
\label{Equ Proof Thm Charact 2}
\end{align}
For $s \in [2^n,2^{n+1}],$ we have
\[\langle s \rangle^{-\alpha} \lesssim 2^{-n\alpha} \lesssim 2^{-n\alpha} e^{-2^{-n}s} \lesssim (\pi-\theta_n)^\alpha \frac{e^{\theta_n s}}{\sin \pi(\beta + i s)} ,\]
where $\theta_n = \pi - 2^{-n}.$
Therefore
\begin{align}
R[L^2([2^n,2^{n+1}])] ( \langle s \rangle^{-\alpha} A^{is})
& \lesssim (\pi-\theta_n)^{\alpha} R[L^2(\R,ds)] ( \frac{\pi}{\sin\pi (\beta + is)}e^{\theta_n s}A^{is}) \nonumber \\
& \overset{\eqref{Equ Proof Thm Charact}}{\lesssim} \sup_{0<|\theta| \leq \pi} |\theta|^{\alpha} R[L^2(\R_+,dt/t)] (t^\beta A^{1 - \beta}R(te^{i\theta},A)) < \infty.
\nonumber
\end{align}
Thus, the sum in \eqref{Equ Proof Thm Charact 2} is finite.

The part $\langle s \rangle^{-(\alpha + \epsilon)}A^{is}$ for $s \leq -1$ is treated similarly,
whereas $R[L^2(-1,1)](\langle s \rangle^{-\alpha}A^{is}) \cong R[L^2(-1,1)](A^{is}).$
It remains to show that the last expression is finite.
We have assumed that $X$ has property $(\alpha).$
Then the fact that $A$ has an $\HI$ calculus implies that $\{A^{is}:\: |s| < 1\}$ is $R$-bounded \cite[Theorem 12.8]{KuWe}, so by Example \ref{Exa R[E]-bounded} a), it is $R[L^1(-1,1)]$-bounded.
For $f \in L^2(-1,1),$ we have $\|f\|_1 \leq C \|f\|_2,$ and consequently,
\[ \left\{ \int_{-1}^1 f(s) A^{is} ds :\: \|f\|_2 \leq 1 \right\} \subset C \left\{ \int_{-1}^1 f(s) A^{is} ds :\: \|f\|_1 \leq 1 \right\}. \]
In other words, $(A^{is}: |s| < 1)$ is $R[L^2]$-bounded.\\

\noindent
\eqref{it group} $\Longleftrightarrow$ \eqref{it Res A growth variant}:\\
Consider
\begin{equation}\label{Equ Proof Charact K}
 K : L^2(\R,ds) \to L^2(\R \times (-\pi,\pi),ds d\theta),\,
f(s) \mapsto (\pi-|\theta|)^{\alpha-\frac12}\frac{1}{\sin \pi(\beta + is)} e^{\theta s} \langle s \rangle^{\alpha} f(s),
\end{equation}
Note that $|\sin \pi(\beta + is)| \cong \cosh(\pi s)$ for $\beta \in (0,1)$ fixed.
$K$ is an isomorphic embedding.
Indeed,
\[ \|Kf\|_2^2 =
\int_\R \int_{-\pi}^\pi \left((\pi-|\theta|)^{\alpha-\frac12} e^{\theta s}\right)^2 d\theta \frac{1}{|\sin^2(\pi(\beta + i s))|} \langle s \rangle^{2\alpha} |f(s)|^2 ds\]
and
\begin{align}
\int_{-\pi}^\pi (\pi-|\theta|)^{2\alpha -1} e^{2\theta s} d\theta
& \cong \int_0^\pi \theta^{2\alpha -1} e^{2(\pi - \theta) |s|} d\theta
\nonumber \\
& \cong \cosh^2(\pi s) \int_0^\pi \theta^{2\alpha-1} e^{-2\theta |s|} d\theta.
\nonumber
\end{align}
For $|s|\geq 1,$
\[\int_0^\pi \theta^{2\alpha -1} e^{-2\theta |s|} d\theta = (2|s|)^{-2\alpha} \int_0^{2|s|\pi} \theta^{2\alpha -1} e^{-\theta}d\theta \cong |s|^{-2\alpha}.\]
This clearly implies that $\|Kf\|_2 \cong \|f\|_2.$
We now apply Lemma \ref{Lem Transformation R[E]-bounded} for $K$ and for the mapping $L : \Im K \oplus (\Im K)^\bot \to L^2(\R),\: x \oplus y \mapsto K^{-1} x.$
Note that $\Im K$ is closed since $K$ is an isomorphic embedding, so that $\Im K \oplus (\Im K)^\bot = L^2(\R \times (-\pi,\pi))$ and $L$ is bounded since $\|L(x \oplus y)\| = \|K^{-1} x\| \cong \|x\| \leq \|x \oplus y\|.$
We deduce
\[ R[L^2(\R,ds)](\langle s \rangle^{-\alpha} A^{is})
\cong R[L^2(\R \times (-\pi,\pi),dsd\theta)]((\pi-|\theta|)^{\alpha-\frac12} \frac{1}{\cosh (\pi s)} e^{\theta s}A^{is}).\]
Recall the formula \eqref{Equ Res A Mellin BIP}, i.e.
\[\frac{\pi}{\sin \pi (\beta + i s)} e^{\theta s} A^{is} x =
\int_0^\infty t^{is}\left[t^{\beta} e^{i\theta \beta} A^{1 - \beta}(e^{i\theta}t+A)^{-1}x\right] \frac{dt}{t}\]
for $|\theta| < \pi$ and $x \in A(D(A^2)).$
Note that $A(D(A^2))$ is a dense subset of $X.$
As the Mellin transform $f(s) \mapsto \int_0^\infty t^{is} f(s) \frac{ds}{s}$ is an isometry $L^2(\R_+,\frac{ds}s)\to L^2(\R,dt),$ we get by Lemma \ref{Lem Transformation R[E]-bounded}
\begin{align}
R[L^2(\R)] (\langle s \rangle^{-\alpha} A^{is})
& \cong R[L^2(\R_+ \times (-\pi,\pi),\frac{dt}{t}d\theta)]((\pi -|\theta|)^{\alpha-\frac12} t^{\beta}A^{1-\beta}(e^{i\theta}t+A)^{-1})
\nonumber \\
& \cong R[L^2(\R_+ \times (0,2\pi),dt/t d\theta)] ( |\theta|^{\alpha-\frac12} t^\beta A^{1 - \beta} R(e^{i\theta}t,A) ).
\nonumber
\end{align}
so that \eqref{it group} $\Longleftrightarrow$ \eqref{it Res A growth variant} for $\theta_0 = \pi.$
Here we used that $(e^{i\theta} t + A)^{-1} = - (e^{i (\pm \pi + \theta)} t - A)^{-1} = - R(e^{i(\pm \pi + \theta)} t,A).$

For a general $\theta_0 \in (0,\pi],$ consider $K$ from \eqref{Equ Proof Charact K} with restricted image, i.e.
\[K : L^2(\R,ds) \to L^2(\R \times (-\pi,-(\pi-\theta_0)] \cup [\pi-\theta_0,\pi),ds d\theta).\]
Then argue as in the case $\theta_0 = \pi.$\\

\noindent
\eqref{it Res A growth variant} $\Longleftrightarrow$ \eqref{it Sgr growth variant}:\\
The proof of \eqref{it group} $\Longleftrightarrow$ \eqref{it Res A growth variant} above shows that condition \eqref{it Res A growth variant} is independent of $\theta_0 \in (0,\pi]$ and $\beta \in (0,1).$
Put $\theta_0 = \pi$ and $\beta = \frac12.$
Apply Lemma \ref{Lem Transformation R[E]-bounded} with
\[ (e^{i\theta} \mu + it)^{-1} = K[\exp(-(\cdot)e^{i\theta} \mu)\chi_{(0,\infty)}(\cdot)](t), \]
where $K : L^2(\R,ds) \to L^2(\R,dt)$ is the Fourier transform.
This yields that \eqref{it Res A growth variant} is equivalent to
\[ R[L^2((0,\frac{\pi}{2}) \times \R_+,d\theta dt)] (|\theta|^{\alpha-\frac12} A^{\frac12} T(\exp(\pm i(\frac{\pi}{2} - \theta))t)) < \infty.\]
Applying the change of variables $\theta \rightsquigarrow \frac{\pi}{2} \pm \theta$ and $dt \rightsquigarrow t dt$ shows that this is equivalent to
\[ R[L^2((-\frac{\pi}2,\frac{\pi}2)\times \R_+,d\theta t dt)] ( (\frac{\pi}{2} - |\theta|)^{\alpha - \frac12} t^{-\frac12}A^{\frac12} T(e^{i\theta}t) ) < \infty.\]
Now the equivalence to  \eqref{it Sgr growth variant} follows from the change of variables $a = t \cos \theta,\,b= t \sin \theta,\,t = |a + ib|,\,d\theta\, t dt = da\, db.$\\

\noindent
\eqref{it Res A growth} $\Longleftrightarrow$ \eqref{it Sgr growth} for $\beta = \frac12:$
Use $K$ and the first argument from the proof of \eqref{it Res A growth variant} $\Longleftrightarrow$ \eqref{it Sgr growth variant}.\\

\noindent
\eqref{it group} $\Longleftrightarrow$ \eqref{it Wave variant}:
Recall the formula from Proposition \ref{Prop Wave variant}, $M$ denoting the Mellin transform,
\[ M(\langle (sA)^{\frac12} w_\alpha(sA) x, x' \rangle)(t) = i^{-\alpha + \frac12 + it} \Gamma(-\alpha + \frac12 + it) \langle A^{-it} x, x' \rangle,\]
where
$w_\alpha(s) = |s|^{-\alpha} \left( e^{is} - \sum_{j=0}^{m-1} \frac{(is)^j}{j!} \right).$
Then we have according to \cite[p.~15]{Lebe}, since $\alpha-\frac12 \not\in \N_0,$
\[|i^{-\alpha + \frac12 + it} \cdot \Gamma(-\alpha + \frac12 + it)| \cong e^{-\frac{\pi}{2} t} \cdot e^{-\frac{\pi}{2} |t|} \tma\]
for $t \in \R.$
Thus, with Lemmas \ref{Lem Transformation R[E]-bounded} and \ref{Lem R[E]-bounded technical} (2),
\begin{align*}
R[L^2(\R,dt)](\tma A^{it}) < \infty & \Longleftrightarrow R[L^2(\R_+,ds/s)]((sA)^{\frac12} w_\alpha(\pm sA)) < \infty \\
& \Longleftrightarrow R[L^2(\R,ds)](A^{\frac12} w_\alpha(sA)) < \infty.
\end{align*}
\end{proof}

We can now complete the proof of a claim in Section \ref{Sec Hormander classes}.

\begin{prop}\label{Prop assumptions auxiliary calculus}
Let $A$ be a $0$-sectorial operator and $\beta, \theta > 0.$
Assume one of the following conditions.
\begin{align}
 \int_\R |\langle s \rangle^{-\beta} \langle e^{isA} x, x' \rangle|^2 ds & \leq C \|x\|_{D(\theta)}^2 \|x'\|_{X'}^2, \label{equ 2a assumptions auxiliary calculus} \\
\int_0^\infty | \langle \exp(-e^{i \omega} t A)x, x' \rangle|^2 dt & \leq C (\frac{\pi}{2} - |\omega|)^{-2\beta} \|x\|_{D(\theta)}^2 \|x'\|_{X'}^2\text{ for some fixed }C>0\text{ and any }\frac{\pi}{4} \leq |\omega| < \frac{\pi}{2}, \label{equ 3a assumptions auxiliary calculus} \\
 \int_0^\infty | t^\gamma \langle R(e^{i\omega} t, A) x, x' \rangle |^2 \frac{dt}{t} & \leq C |\omega|^{-2 \beta} \|x\|_{D(\theta)}^2 \|x'\|_{X'}^2\text{ for some fixed }\gamma \in (0,1)\text{ and any }\omega \in (-\pi,\pi) \backslash \{ 0 \}. \label{equ 4a assumptions auxiliary calculus}
\end{align}
Then $A$ satisfies
\begin{equation}\label{equ 1a assumptions auxiliary calculus}
\int_\R | \langle t \rangle^{-\alpha} \langle A^{it} x, x' \rangle |^2 dt \leq C \|x\|_{D(\theta')}^2 \|x'\|_{X'}^2
\end{equation}
for some $\alpha,\theta' > 0.$
\end{prop}

\begin{proof}
Assume that \eqref{equ 2a assumptions auxiliary calculus} holds.
We have with $e^{i\omega} t = r + is,$
\begin{align*}
\left( \frac{r}{|s|}\right)^\alpha \exp(-(r+is)A) & = \left[ \left( \frac{r}{|s|} \right)^\alpha ( 1 + rA )^{-\alpha} ( 1 + |s| )^\alpha (1 + A)^\alpha \right] \times \\
&\times \left[ ( 1 + |s| )^{-\alpha} (1 + A)^{-\alpha} e^{-isA} \right] \left[ ( 1 + rA)^\alpha \exp(-rA) \right]. 
\end{align*}
We handle each of the three brackets separately.
For the first bracket, note that the $\HI(\Sigma_\sigma)$ norm of $\lambda \mapsto \left( \frac{r}{|s|} \right)^\alpha ( 1 + r \lambda )^{-\alpha} ( 1 + |s| )^\alpha (1 + \lambda)^\alpha \rho^{\alpha}(\lambda)$ (where we recall $\rho(\lambda) = \lambda ( 1 + \lambda)^{-2}$) is uniformly bounded for $|s| \geq r > 0,$ thus the first bracket defines a bounded operator $D(\alpha + 1) \to X$ with uniform norm bound in $s,r.$
The third bracket is also uniformly bounded $X \to X.$
Thus, we deduce \eqref{equ 3a assumptions auxiliary calculus} with the same $\beta$ and $\theta$ replaced by $\theta + \alpha + 1.$

If \eqref{equ 3a assumptions auxiliary calculus} in turn holds, then by the boundedness of $A^{\frac12} : D(\frac12) \to X,$ we also have
\begin{equation}\label{equ Proof Prop auxiliary}
 \int_0^\infty | \langle A^{\frac12} \exp(-e^{i\omega} tA)x,x' \rangle|^2 dt \leq C (\frac{\pi}{2} - |\omega|)^{-2\beta} \|x\|_{D(\theta + \frac12)}^2 \|x'\|_{X'}^2
\end{equation}
for $|\omega| < \frac{\pi}{2}.$

By essentially the same proof as Theorem \ref{Thm Characterizations Hoermander calculus}, i.e. using the integral transforms from ``\eqref{it Res A growth variant} $\Longleftrightarrow$ \eqref{it Sgr growth variant}'' and ``\eqref{it Res A growth} $\Longrightarrow$ \eqref{it group}'' to go forth and back between the different operator families, and using that $\|A^{it}\|_{B(D(\sigma),X)} \leq C$ for $|t| \leq 1$ and any $\sigma > 0,$
one can show that this implies \eqref{equ 1a assumptions auxiliary calculus} with $\alpha > \beta$ and $\theta' = \theta.$

Finally, again arguing as in the proof of Theorem \ref{Thm Characterizations Hoermander calculus} and using that $A^{\frac12}$ is bounded $D(\frac{1}{2}) \to X,$ one can show that \eqref{equ 4a assumptions auxiliary calculus} implies \eqref{equ Proof Prop auxiliary}, which in turn implies \eqref{equ 1a assumptions auxiliary calculus}.
\end{proof}

Theorem \ref{Thm Characterizations Hoermander calculus} shows that averaged $R$-boundedness yields a good tool to describe $\Sea$ functional calculus.
However, many of the functions $f$ that correspond to relevant spectral multipliers, as for example in \eqref{it group} -- \eqref{it Wave} above, are not covered themselves by this calculus.
To pass from the $\Sea$ calculus to the $\Ha$ calculus, which does cover all the spectral multipliers alluded to above, we shall use the spectral decomposition of Paley-Littlewood type used in the proof of the following Theorem.

\begin{thm}\label{Thm Sea Ha}
Let $A$ be a $0$-sectorial operator on a Banach space $X$ with property $(\alpha)$ having a bounded $\HI(\Sigma_\sigma)$ calculus for some $\sigma \in (0,\pi].$
Then the following are equivalent for $\alpha > \frac12.$
\begin{enumerate}
\item $A$ has an $R$-bounded $\Sea$ calculus.
\item $A$ has an $R$-bounded $\Ha$ calculus.
\end{enumerate}
\end{thm}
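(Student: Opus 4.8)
The plan is to prove the equivalence (2) $\Rightarrow$ (1) trivially by the embedding $\Sea \hookrightarrow \Ha$ (which holds since $\|f\|_{\Ha} = \sup_n \|\psi_n f_e\|_{\Sob^\alpha} \lesssim \|f_e\|_{\Soa} = \|f\|_{\Sea}$, using that multiplication by the fixed cutoff $\psi_n$ is bounded on $\Soa$ uniformly in $n$), so that $\{f(A) : \|f\|_{\Ha} \leq 1\} \subset \{g(A) : \|g\|_{\Sea} \leq C\}$, whence the $R$-bound for the $\Ha$ calculus dominates that for the $\Sea$ calculus. Thus the real content is (1) $\Rightarrow$ (2). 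First I would fix a dyadic partition of unity $(\dyad_n)_{n \in \Z}$ and, for $f \in \Ha$, decompose $f = \sum_{n \in \Z} f \tdyad_n$ where $\tdyad_n = \dyad_{n-1} + \dyad_n + \dyad_{n+1}$ (so that $\dyad_n \tdyad_n = \dyad_n$); the point is that each localized piece $f\tdyad_n$ lives on a dyadic annulus and, after rescaling, has $\Sea$-norm comparable to $\|f\|_{\Ha}$, uniformly in $n$. Concretely, $\|f\tdyad_n\|_{\Sea} = \|(f\tdyad_n)_e\|_{\Soa} = \|(f_e)(\tdyad_n)_e\|_{\Soa}$, and since $(\tdyad_n)_e = \tdyad_e(\cdot - n\log 2)$ is a translate of a fixed bump, the product with $f_e$ localizes $f_e$ to an interval of fixed length, on which the $\Soa$ norm is $\lesssim \sup_m \|\psi_m f_e\|_{\Sob^\alpha} = \|f\|_{\Ha}$ by Lemma \ref{Lem Classical and modern Hoermander condition} and the definition of $\Ha$.

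The key step is then to combine three ingredients. For $x \in D_A$ we write, using Lemma \ref{Lem Soaloc calculus}(c) and $\dyad_n \tdyad_n = \dyad_n$,
\[
f(A) x = \sum_{n \in \Z} (f\tdyad_n)(A)\dyad_n(A) x,
\]
a finite sum for $x \in D_A$. Given finitely many $x_1, \dots, x_k \in D_A$ and Rademachers $\epsilon_1, \dots, \epsilon_k$, I would estimate $\E \|\sum_j \epsilon_j f(A) x_j\|$ by first applying Lemma \ref{Lem PL decomposition} (valid since the bounded $\HI(\Sigma_\sigma)$ calculus gives a bounded $\Mih^\gamma$ calculus for large $\gamma$ by Lemma \ref{Lem Elementary Mih Hor}(\ref{it Embeddings Hormander})), which replaces $\|\sum_j \epsilon_j f(A) x_j\|$ by a doubly-averaged expression $\E_\epsilon \E_{\epsilon'} \|\sum_j \sum_n \epsilon_j \epsilon'_n \dyad_n(A) f(A) x_j\|$. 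Property $(\alpha)$ of $X$ lets us pass to a single Rademacher family indexed by the pair $(j,n)$. Now the operators involved are $\dyad_n(A)(f\tdyad_n)(A) = (\dyad_n f\tdyad_n)(A)$; since each $\|f\tdyad_n\|_{\Sea} \lesssim \|f\|_{\Ha}$, the hypothesis (1) says the family $\{(f\tdyad_n)(A) : n \in \Z\}$ is $R$-bounded with bound $\lesssim \|f\|_{\Ha}$. Applying $R$-boundedness (in the doubly-indexed form, which is legitimate once the two Rademacher families are merged via property $(\alpha)$) to pull these operators out, followed by Lemma \ref{Lem PL decomposition} again in reverse, yields $\E\|\sum_j \epsilon_j f(A) x_j\| \lesssim \|f\|_{\Ha} \, \E\|\sum_j \epsilon_j x_j\|$. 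To obtain the $R$-boundedness of the whole calculus, one runs this argument with several functions $f^{(1)}, \dots, f^{(k)}$ simultaneously (each paired with its own $x_j$), which goes through verbatim since at each step the $R$-bound of $\{(f^{(j)}\tdyad_n)(A)\}$ is controlled uniformly by $\sup_j \|f^{(j)}\|_{\Ha} \leq 1$.

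The main obstacle I anticipate is the careful bookkeeping of the two independent Rademacher averages and the justification that property $(\alpha)$ may be invoked here: the operators $(\dyad_n f\tdyad_n)(A)$ act on the $j$-index while the Paley–Littlewood decomposition introduces an $n$-index, and one must check that the $R$-boundedness of $\{(f^{(j)}\tdyad_n)(A)\}_{j,n}$ can indeed be applied to a family of vectors indexed by $(j,n)$ rather than being applied $n$ times separately — this is exactly where property $(\alpha)$ enters, collapsing $\E_\epsilon \E_{\epsilon'}$ into a single average over a rectangular array so that the $R$-bound can act on all indices at once. A secondary technical point is the uniform estimate $\|f\tdyad_n\|_{\Sea} \lesssim \|f\|_{\Ha}$ with a constant independent of $n$, which requires that the rescaling $t \mapsto 2^n t$ (equivalently translation by $n \log 2$ in the exponential variable) acts isometrically on the relevant seminorms — true because the $\Ha$ norm is manifestly translation-invariant in the $f_e$ variable and the $\Soa$-norm of a compactly-and-uniformly-supported bump times $f_e$ only sees $f_e$ locally, cf. Lemma \ref{Lem Elementary Mih Hor}(4). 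Everything else is a routine assembly of Lemmas \ref{Lem Soaloc calculus} and \ref{Lem PL decomposition} with the definition of $R$-boundedness.
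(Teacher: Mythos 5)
The core strategy you propose for $(1)\Rightarrow(2)$ --- decompose $f$ dyadically, show each piece $f\tdyad_n$ has $\Sea$-norm controlled by $\|f\|_{\Ha}$, then combine the Paley--Littlewood decomposition of Lemma \ref{Lem PL decomposition} with property $(\alpha)$ to collapse the double Rademacher average and invoke the $R$-bounded $\Sea$ calculus --- is exactly the second half of the paper's proof. However, there is a genuine gap in how you justify the first half, namely the hypothesis of Lemma \ref{Lem PL decomposition}, and there is also a small but telling slip in the (trivial) direction.

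The gap: you assert that ``the bounded $\HI(\Sigma_\sigma)$ calculus gives a bounded $\Mih^\gamma$ calculus for large $\gamma$ by Lemma \ref{Lem Elementary Mih Hor}\eqref{it Embeddings Hormander}.'' That lemma gives the continuous embedding $\HI(\Sigma_\sigma)\hookrightarrow\Mih^\gamma$, i.e.\ $\|f\|_{\Mih^\gamma}\lesssim\|f\|_{\infty,\sigma}$, which is the \emph{wrong} direction for what you need. Since the $\Mih^\gamma$ norm is weaker, a $\Mih^\gamma$ calculus implies an $\HI(\Sigma_\sigma)$ calculus, not conversely; a $0$-sectorial operator with a bounded $\HI(\Sigma_\sigma)$ calculus for a single fixed $\sigma$ need not have a Mihlin calculus. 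The same objection applies if one tries instead to use $\Sea\hookrightarrow\Mih^\beta$ (part of the same chain of embeddings): the Mihlin space is the larger one, so the $\Sea$ calculus hypothesis cannot be transported across it. The paper therefore spends the bulk of its proof establishing the $\Mih^\gamma$ calculus from scratch: after reducing to $\sigma<\frac\pi4$, one shows the family $\{(\Re z/|z|)^\beta e^{-zA}:\Re z>0\}$ is $R$-bounded for $\beta>\alpha$ by splitting $e^{-z\lambda}=e^{-z\lambda}e^{-\lambda}+e^{-z\lambda}(1-e^{-\lambda})$, controlling the first factor in $\HI(\Sigma_\theta)$ (using property $(\alpha)$ to upgrade the $\HI$ calculus to an $R$-bounded one) and the second in $\Sea$ (using hypothesis (1)), and then invoking the spectral multiplier theorem of \cite{KrW2}. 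Without this step you cannot apply Lemma \ref{Lem PL decomposition}, so your proof as written does not go through; the step you skipped is in fact the heart of the argument and uses both hypotheses of the theorem in an essential way.

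A secondary point: in the $(2)\Rightarrow(1)$ direction you write $\{f(A):\|f\|_{\Ha}\leq1\}\subset\{g(A):\|g\|_{\Sea}\leq C\}$, but the embedding $\Sea\hookrightarrow\Ha$ gives $\|f\|_{\Ha}\lesssim\|f\|_{\Sea}$, hence the correct inclusion is $\{f(A):\|f\|_{\Sea}\leq1\}\subset\{g(A):\|g\|_{\Ha}\leq C\}$; your conclusion is right but your displayed inclusion points the wrong way. Once the Mihlin calculus step is supplied, the remainder of your argument --- the uniform bound $\|\dyad_n f\|_{\Sea}\lesssim\|f\|_{\Ha}$, the use of $\tdyad_n$ so that $\dyad_n\tdyad_n=\dyad_n$, the double randomization merged via property $(\alpha)$, and pulling out the $R$-bounded family $\{(\dyad_n f_k)(A)\}$ --- matches the paper's calculation line by line.
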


\begin{exa}
Consider the operator $A = -\Delta$ on $X = L^p(\R^d)$ for some $1 < p < \infty$ and $d \in \N.$
H\"ormander's classical result states that $A$ has a bounded $\Hor^\alpha_2$ calculus for $\alpha > \frac{d}{2}.$
In fact, a stronger result holds and $A$ has an $R$-bounded $\Hor^\alpha_2$ calculus for the same range $\alpha > \frac{d}{2}.$
This is proved in \cite[Theorem 5.1]{Kr2}, \cite[Beginning of Section 4]{Kr3}.
\end{exa}

\begin{proof}[Proof of Theorem \ref{Thm Sea Ha}]
As $\Sea \subset \Ha,$ only the implication (1) $\Longrightarrow$ (2) has to be shown.
Consider a function $\phi \in \HI_0(\Sigma_\nu)$ such that $\sum_{n \in \Z}\phi^3(2^{-n} \lambda) = 1$ for any $\lambda \in \Sigma_\nu$ and some $\nu > \sigma.$
Furthermore, consider a function $\eta \in C^\infty_c$ with $\supp \eta \subset [\frac12,2]$ such that $\sum_{n \in \Z} \eta(2^{-n} t) = 1$ for any $t > 0.$
Let $f_1,\ldots,f_N \in C^\infty(\R_+)$ with $\|f_j\|_{\Ha} \leq 1$ for $j = 1,\ldots,N.$
Then for $x_1, \ldots, x_N$ belonging to the dense set $D_A \subseteq D(f_j(A))$ from \eqref{Equ D},
\begin{align*}
\E \| \sum_{j=1}^N \epsilon_j f_j(A) x_j \| & = \E \| \sum_{j=1}^N \sum_{k \in \Z} \epsilon_j \eta(2^{-k}A)f_j(A)x_j \| \\
& \cong \E \E' \| \sum_{j =1}^N \sum_{n,k \in \Z} \epsilon_j \epsilon_n' \phi^2(2^{-n}A) \eta(2^{-k}A)f_j(A) x_j \| \\
& \leq \sum_{l \in \Z} \E \E' \| \sum_{j=1}^N \sum_{n \in \Z} \epsilon_j \epsilon_n' [\phi(2^{-n}A) \eta(2^{-n-l}(A) f_j(A)] \phi(2^{-n} A) x_j\| \\
& \leq \left( \sum_{l \in \Z} C_l \right) \E \E' \| \sum_{j=1}^N \sum_{n \in \Z} \epsilon_j \epsilon_n' \phi(2^{-n} A)x_j \| \\
& \lesssim  \left( \sum_{l \in \Z} C_l \right) \E \|\sum_{j=1}^N \epsilon_j x_j \|,
\end{align*}
where we have used that $\|x\| \cong \E \| \sum_{n \in \Z} \epsilon_n \phi^2(2^{-n}A) x \| \cong \E \| \sum_{n \in \Z} \epsilon_n \phi(2^{-n}A) x \|.$
Indeed, the second expression is estimated by the third one, since $\{ \phi(2^{-n}A):\:n \in \Z \}$ is $R$-bounded by the $R$-boundedness of the $\HI(\Sigma_\nu)$ calculus \cite[12.8 Theorem]{KuWe}.
The third expression is estimated by the first one according to \cite[12.2 Theorem and 12.3 Remark]{KuWe}.
Finally the first expression is estimated by the second one again by \cite[12.2 Theorem and 12.3 Remark]{KuWe} and $|\langle x, x' \rangle| = | \E \langle \sum_{n \in \Z} \epsilon_n \phi^2(2^{-n}A)x, \sum_{k \in \Z} \epsilon_k \phi(2^{-k} A)x' \rangle| \leq \E \|\sum_n \epsilon_n \phi^2(2^{-n}A) x\| \: \E \| \sum_k \epsilon_k \phi(2^{-k}A)'x'\| \lesssim \E \|\sum_n \epsilon_n \phi^2(2^{-n}A) x\| \: \|x'\|. $

Furthermore, we used property $(\alpha)$ in the fourth line, and $C_l = R(\{ \phi(2^{-n}A)\eta(2^{-n-l}A)f_j(A) :\: n \in \Z,\: j = 1,\ldots,N \})$ and 
\begin{align*}
C_l & \lesssim \sup_{j=1,\ldots,N}\sup_{n \in \Z} \|\phi(2^{l}\cdot)\eta f_j(2^{n+l}\cdot)\|_{\Sea} \\
& \lesssim \sup_{j=1,\ldots,N}\sup_{k \in \Z} \|\eta f_j(2^k\cdot)\|_{\Sea} \sup_{m=0,\ldots,\lfloor \alpha \rfloor + 1} \sup_{t \in [\frac12,2]} t^m | \frac{d^m}{dt^m}\phi(2^l \cdot)(t)| \\
& \lesssim \sup_{j=1,\ldots,N}\|f_j\|_{\Ha}  2^{-\epsilon |l|}, \\
& \leq 2^{-\epsilon |l|}
\end{align*}
where $\epsilon > 0$ and we used the fact that $\phi \in \HI_0(\Sigma_\nu).$
Hence $\sum_{l\in\Z} C_l \lesssim \sup_{j=1,\ldots,N}\|f_j\|_{\Ha} < \infty.$
We have shown that
\begin{equation}\label{Equ Proof Localization Principle}
\{ f(A):\: f \in C^\infty(\R_+),\,\|f\|_{\Ha} \leq 1\}
\end{equation}
is $R$-bounded.
In particular, since $C^\infty(\R_+) \supset \bigcap_{\omega > 0}\HI(\Sigma_\omega),$
$A$ has a bounded $\Ha$ calculus in the sense of Definition \ref{Def Hor calculus}, and
by taking the closure of \eqref{Equ Proof Localization Principle}, this calculus is $R$-bounded.
\end{proof}

\section{Bisectorial operators and operators of strip type}\label{Sec Remarks}

\subsection{Bisectorial operators}

In this short subsection we indicate how to extend our results to bisectorial operators.
An operator $A$ with dense domain on a Banach space $X$ is called bisectorial of angle $\omega \in [0,\frac{\pi}{2})$ if it is closed, its spectrum is contained in the closure of $S_\omega = \{ z \in \C :\: |\arg(\pm z)| < \omega \},$ and one has the resolvent estimate 
\[\|(I+\lambda A)^{-1}\|_{B(X)} \leq C_{\omega'},\: \forall \: \lambda \not\in S_{\omega'},\: \omega' > \omega .\]
If $X$ is reflexive, then for such an operator we have again a decomposition $X = N(A) \oplus \overline{R(A)},$ so that we may assume that $A$ is injective.
The $\HI(S_\omega)$ calculus is defined as in \eqref{Equ Cauchy Integral Formula}, but now we integrate over the boundary of the double sector $S_\omega.$
If $A$ has a bounded $\HI(S_\omega)$ calculus, or more generally, if we have $\|Ax\| \cong \|(-A^2)^{\frac12}x\|$ for $x \in D(A) = D((-A^2)^{\frac12})$ (see e.g. \cite{DW}), then the spectral projections $P_1,\: P_2$ with respect to $\Sigma_1 = S_\omega \cap \C_+,\: \Sigma_2 = S_\omega \cap \C_-$ give a decomposition $X = X_1 \oplus X_2$ of $X$ into invariant subspaces for resolvents of $A$ such that the part $A_1$ of $A$ to $X_1$ and $-A_2$ of $-A$ to $X_2$ are sectorial operators with $\sigma(A_i) \subset \Sigma_i.$
For $f \in \HI_0(S_\omega)$ we have 
\begin{equation}\label{Equ Bisectorial}
f(A)x = f|_{\Sigma_1}(A_1)P_1 x + f|_{\Sigma_2}(A_2)P_2 x.
\end{equation}
We define the H\"ormander class $\Ha(\R)$ on $\R$ by $f \in \Ha(\R)$ if $f \chi_{\R_+} \in \Ha$ and $f(-\cdot) \chi_{\R_+} \in \Ha.$
Let $A$ be a $0$-bisectorial operator, i.e. $A$ is $\omega$-bisectorial for all $\omega > 0.$
Assume that $A$ has a bounded $\HI(S_\omega)$ calculus for some $\omega \in (0,\pi/2).$
Then $A$ has an ($R$-bounded) $\Ha(\R)$ calculus if the set $\{f(A):\: f \in \bigcap_{0 < \omega < \pi/2} \HI(S_\omega) \cap \Ha(\R),\: \|f\|_{\Ha(\R)}\leq 1\}$ is ($R$-)bounded.
Clearly, $A$ has an ($R$-bounded) $\Ha(\R)$ calculus if and only if $A_1$ and $-A_2$ have an ($R$-bounded) $\Ha$ calculus and in this case \eqref{Equ Bisectorial} holds again.

Let $f_t(\lambda) = \begin{cases} \lambda^{it} :\: \Re \lambda > 0 \\ (-\lambda)^{it} :\: \Re \lambda  < 0 \end{cases}.$
Then $f_t \in \HI(S_\omega)$ for any $\omega \in (0,\frac{\pi}{2}).$
Clearly, one has $f_t(A) = A_1^{it} \oplus (-A_2)^{it}$ on $X = X_1 \oplus X_2.$
It is easy to show that $(\tma f_t(A) :\: t \in \R)$ is $R[L^2(\R,dt)]$-bounded if and only if $(\tma A_1^{it} :\: t \in \R)$ and $(\tma (-A_2)^{it} :\: t \in \R)$ are both $R[L^2(\R,dt)]$-bounded.
Let $|A| = f(A)$ with $f(z) = 1$ for $\Re z > 0$ and $f(z) = -1$ for $\Re z < 0.$
Then similarly, we have that 
\[R[L^2(\R_+,dt/t)](t^\beta |A|^{1-\beta} (e^{i\theta}t - A)^{-1}) \lesssim (\min(|\theta|,\pi-|\theta|))^{-\alpha}\] for $0 < |\theta| < \pi$ if and only if both of the following conditions hold:
\[R[L^2(\R_+,dt/t)](t^\beta A_1^{1-\beta} (e^{i\theta} t - A_1)^{-1}) \lesssim |\theta|^{-\alpha}\text{ and }R[L^2(\R_+,dt/t)](t^\beta (-A_2)^{1-\beta} (e^{i\theta}t + A_2)^{-1} \lesssim |\theta|^{-\alpha}\] for $0 < |\theta| \leq \frac{\pi}{2}.$
Finally, we have that $|s|^{-\alpha} |A|^{-\alpha + \frac12} (e^{isA} - 1)^m$ is $R[L^2(\R)]$-bounded if and only if both $|s|^{-\alpha} A_1^{-\alpha + \frac12} (e^{isA_1} - 1)^m$ and
$|s|^{-\alpha} (-A_2)^{-\alpha + \frac12} (e^{isA_2} -1)^m)$ are $R[L^2(\R)]$-bounded.

Then using the projections $P_1$ and $P_2,$ it is clear how our main Theorems \ref{Thm Characterizations Hoermander calculus} and \ref{Thm Sea Ha} extend to bisectorial operators.

\subsection{Strip-type operators}

For $\omega > 0$ we let $\Str_\omega = \{z \in \C :\: |\Im z| < \omega\}$ the horizontal strip of height $2 \omega.$
We further define $\HI(\Str_\omega)$ to be the space of bounded holomorphic functions on $\Str_\omega,$ which is a Banach algebra equipped with the norm $\|f\|_{\infty,\omega} = \sup_{\lambda \in \Str_\omega} |f(\lambda)|.$
A densely defined operator $B$ is called $\omega$-strip-type operator if $\sigma(B) \subset \overline{\Str_\omega}$ and for all $\theta > \omega$ there is a $C_\theta > 0$ such that $\|\lambda (\lambda - B)^{-1}\| \leq C_\theta$ for all $\lambda \in \overline{\Str_\theta}^c.$
Similarly to the sectorial case, one defines $f(B)$ for $f \in \HI(\Str_\theta)$ satisfying a decay for $|\Re \lambda| \to \infty$ by a Cauchy integral formula, and says that $B$ has a bounded $\HI(\Str_\theta)$ calculus provided that $\|f(B)\| \leq C \|f\|_{\infty,\theta},$ in which case $f \mapsto f(B)$ extends to a bounded homomorphism $\HI(\Str_\theta) \to B(X).$
We refer to \cite{CDMY} and \cite[Chapter 4]{Haasa} for details.
We call $B$ $0$-strip-type if $B$ is $\omega$-strip-type for all $\omega > 0.$

There is an analogous statement to Lemma \ref{Lem Hol} which holds for a $0$-strip-type operator $B$ and $\Str_\omega$ in place of $A$ and $\Sigma_\omega,$ and $\Hol(\Str_\omega) = \{ f : \Str_\omega \to \C :\: \exists n \in \N :\: (\rho \circ \exp)^n f \in \HI(\Str_\omega) \},$
where $\rho(\lambda) = \lambda ( 1 + \lambda)^{-2},$ see \cite[p.~91-96]{Haasa}.

In fact, $0$-strip-type operators and $0$-sectorial operators with bounded $\HI(\Str_\omega)$ and bounded $\HI(\Sigma_\omega)$ calculus are in one-one correspondence by the following lemma.
For a proof we refer to \cite[Proposition 5.3.3., Theorem 4.3.1 and Theorem 4.2.4, Lemma 3.5.1]{Haasa}.

\begin{lem}
Let $B$ be a $0$-strip-type operator and assume that there exists a $0$-sectorial operator $A$ such that $B = \log(A)$.
This is the case if $B$ has a bounded $\HI(\Str_\omega)$ calculus for some $\omega < \pi.$
Then for any $f \in \bigcup_{0 < \omega < \pi} \Hol(\Str_\omega)$ one has
\[ f(B) = (f\circ \log)(A). \]
Note that the logarithm belongs to $\Hol(\Sigma_\omega)$ for any $\omega \in (0,\pi).$
Conversely, if $A$ is a $0$-sectorial operator that has a bounded $\HI(\Sigma_\omega)$ calculus for some $\omega \in (0,\pi),$ then $B = \log(A)$ is a $0$-strip-type operator.
\end{lem}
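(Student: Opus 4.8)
The plan is to exploit that the exponential restricts to a biholomorphism of the strip $\Str_\omega$ onto the sector $\Sigma_\omega$, with inverse the principal branch of the logarithm, and that this change of variables is compatible with the two classes of ``regularisable'' functions: the map $g\mapsto g\circ\log$ is an algebra isomorphism $\Hol(\Sigma_\omega)\to\Hol(\Str_\omega)$. Boundedness and holomorphy are obviously preserved, and since $(\rho\circ\exp)(\lambda)=e^\lambda(1+e^\lambda)^{-2}$ one has $\rho^n g\in\HI(\Sigma_\omega)$ if and only if $(\rho\circ\exp)^n(g\circ\exp)\in\HI(\Str_\omega)$, so the two regulariser classes correspond to each other. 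In particular $\log\in\Hol(\Sigma_\omega)$ for every $\omega\in(0,\pi)$ (one checks $\rho\cdot\log\in\HI(\Sigma_\omega)$), so by Lemma~\ref{Lem Hol} the operator $B=\log(A)$ is well defined, closed and densely defined as soon as $A$ is $0$-sectorial; dually $\exp\in\Hol(\Str_\omega)$ holds \emph{precisely} when $\omega<\pi$, since $(\rho\circ\exp)(\lambda)\exp(\lambda)=e^{2\lambda}(1+e^\lambda)^{-2}$ is bounded on $\Str_\omega$ exactly when $\overline{\Str_\omega}$ avoids the zeros $i\pi(2k+1)$ of $1+e^\lambda$. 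With these identifications in hand, everything reduces to the abstract \emph{composition rule} for the extended holomorphic calculus.

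First I would settle the existence statement: if $B$ has a bounded $\HI(\Str_\omega)$ calculus with $\omega<\pi$, put $A:=\exp(B)$, which is meaningful by the strip-type analogue of Lemma~\ref{Lem Hol} because $\exp\in\Hol(\Str_\omega)$. One checks that $A$ is $0$-sectorial: it is injective since $\exp$ has no zeros, its range is dense, and for every $\theta>0$ and $\mu\notin\overline{\Sigma_\theta}$ the function $r_\mu(\lambda)=(\mu-e^\lambda)^{-1}$ lies in $\HI(\Str_{\theta'})$ for suitable $\theta'<\theta$ with $\sup_{\lambda\in\Str_{\theta'}}|\mu\,r_\mu(\lambda)|\le C_\theta$ by an elementary geometric estimate, whence $(\mu-A)^{-1}=r_\mu(B)$ is bounded and $\|\mu(\mu-A)^{-1}\|\le C_\theta$; letting $\theta\downarrow0$ yields $0$-sectoriality. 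Moreover $z\mapsto z$ and $\log\circ\exp$ both lie in $\Hol(\Str_\omega)$ and agree there (using $\omega<\pi$), so the composition rule gives $\log(A)=\log(\exp(B))=(\log\circ\exp)(B)=B$.

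For the composition formula itself, given $f\in\Hol(\Str_\omega)$ set $g:=f\circ\log\in\Hol(\Sigma_\omega)$, so $f=g\circ\exp$ and the assertion becomes $f(B)=g(A)$. For $f$ in the subalgebra $\HI_0(\Str_\sigma)$ of bounded functions decaying exponentially as $|\Re\lambda|\to\infty$, one obtains $f(B)=g(A)$ by the substitution $w=e^\lambda$ in the Cauchy integral~\eqref{Equ Cauchy Integral Formula} defining $g(A)$: the contour $\partial\Sigma_\sigma$ pulls back to $\partial\Str_\sigma$, $dw=w\,d\lambda$, and one identifies the resulting integral with the analogous Cauchy formula for $f(B)$ (equivalently, one invokes the resolvent-level composition rule of \cite{Haasa}). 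One then extends to arbitrary regularisable $f$: if $e\in\Hol(\Str_\sigma)$ regularises $f$, then $e\circ\log\in\Hol(\Sigma_\sigma)$ regularises $g$ and $(e\circ\log)(A)=e(B)$ by the bounded case, so by the product rule $g(A)$ and $f(B)$ coincide once one verifies that their natural domains are the same. Finally, the remaining ``conversely'' direction, that $B=\log A$ is $0$-strip-type when $A$ is $0$-sectorial with a bounded $\HI(\Sigma_\omega)$ calculus, is obtained once the strip resolvent bounds for $B$ are proved; for these I would use the explicit integral representation of $\log A$ in terms of the resolvents $(t+A)^{-1}$ (as in \cite[Ch.~4]{Haasa}), which yields $\|\mu(\mu-\log A)^{-1}\|\le C_\theta$ for $|\Im\mu|>\theta$ directly from the sectorial resolvent estimates.

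The main obstacle is the composition rule in the unbounded generality needed here --- not the formal contour change, which is routine, but (i) verifying that the pull-back under $\exp$ of the strip-type regulariser class really lands inside the sectorial one, which is exactly where the factor $\rho\circ\exp$ and the hypothesis $\omega<\pi$ are used, and (ii) the domain bookkeeping required to upgrade ``$f(B)$ and $(f\circ\log)(A)$ agree on a dense set'' to genuine operator equality. A secondary point is extracting the strip-type resolvent bounds for $\log A$ from the sectorial data alone. All of this is carried out in \cite{Haasa}; concretely one can assemble the statement from \cite[Prop.~5.3.3, Thm.~4.3.1, Thm.~4.2.4, Lemma~3.5.1]{Haasa}.
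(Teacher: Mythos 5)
Your proposal is correct and follows the same route as the paper, which does not give a self-contained argument but simply refers to Haase's book; you cite exactly the same results (\cite[Prop.~5.3.3, Thm.~4.3.1, Thm.~4.2.4, Lemma~3.5.1]{Haasa}) and your sketch (the biholomorphism $\exp:\Str_\omega\to\Sigma_\omega$, the correspondence of regulariser classes via $\rho\circ\exp$, the composition rule for the extended holomorphic calculus, and the resolvent-level estimate for the converse) is a faithful unfolding of what those references establish.
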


Let $B$ be a $0$-strip-type operator and $\alpha > \frac12.$
We say that $B$ has a (bounded) $\Sobolev^\alpha_2$ calculus if there exists a constant $C > 0$ such that
\[ \|f(B)\| \leq C \|f\|_{\Sobolev^\alpha_2} \quad (f\in \bigcap_{\omega > 0} \HI(\Str_\omega) \cap \Sobolev^\alpha_2) .\]
In this case, by density of $\bigcap_{\omega > 0} \HI(\Str_\omega) \cap \Sobolev^\alpha_2$ in $\Sobolev^\alpha_2$,
the definition of $f(B)$ can be continuously extended to $f \in \Sobolev^\alpha_2.$

Assume that $B$ has a $\Sobolev^\alpha_2$ calculus.
Let $f \in \Sobolev^\alpha_{2,\loc}.$
We define the operator $f(B)$ to be the closure of
\[ \begin{cases}
D_B \subset X & \longrightarrow X \\
x & \longmapsto \sum_{n \in \Z} (\equi_n f)(B)x,
\end{cases}
\]
where $D_B = \{ x \in X :\: \exists N \in \N:\: \equi_n(B)x= 0 \quad (|n| \geq N) \}$ and $(\equi_n)_{n \in \Z}$ is an equidistant partition of unity.

Then there holds an analogous version of Lemma \ref{Lem Soaloc calculus}.
Let $\Wa = \{ f \in L^2_{\text{loc}}(\R) : \: \|f\|_{\Wa} = \sup_{n \in \Z} \| \equi_n f \|_{\Soa} < \infty \}.$
Note that $\Wa$ is contained in $\Soaloc.$
Thus the $\Soaloc$ calculus for $B$ enables us to define the $\Wa$ calculus:
Let $\alpha > \frac12$ and $B$ be a $0$-strip-type operator.
We say that $B$ has an ($R$-bounded) $\Wa$ calculus if there exists a constant $C > 0$ such that
\[ \left\{ f(B) :\: f \in \bigcap_{\omega > 0} \HI(\Str_\omega) \cap \Wa,\:\|f\|_{\Wa} \leq 1 \right\}\text{ is }(R\text{-)bounded}. \]
The strip-type version of the main Theorems \ref{Thm Characterizations Hoermander calculus} and  \ref{Thm Sea Ha} reads as follows.

\begin{thm}\label{Thm Vertical chain of conditions strip}
Let $B$ be $0$-strip-type operator with $\HI$ calculus on some Banach space with property $(\alpha).$
Denote $U(t)$ the $C_0$-group generated by $iB$ and $R(\lambda,B)$ the resolvents of $B.$
For $\alpha > \frac12,$ consider the condition
\begin{equation}
B \text{ has an }R\text{-bounded }\Wor^\alpha_2\text{ calculus.}
\tag*{$(C_2)_\alpha$}
\end{equation}
Furthermore, we consider the conditions
\begin{enumerate}
\item[(a${)_\alpha}$] The family $(\tma U(t):\: t \in \R)$ is $R[L^2(\R)]$-bounded.
\item[(b${)_\alpha}$] The family $(R(t+ic,B):\: t \in \R)$ is $R[L^2(\R)]$-bounded for any $c \neq 0$ and its bound grows at most like $|c|^{-\alpha}$ for $c \to 0.$
\end{enumerate}
Then for all $\epsilon > 0,$
\[(C_2)_\alpha \Longleftrightarrow (\text{a})_\alpha \Longrightarrow (\text{b})_\alpha \Longrightarrow (C_2)_{\alpha + \epsilon}\]
\end{thm}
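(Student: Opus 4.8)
The plan is to transfer everything to the $0$-sectorial operator $A = \exp(B)$ and then invoke Theorems~\ref{Thm Characterizations Hoermander calculus} and~\ref{Thm Sea Ha}, supplementing the two passages that involve resolvents of $B$ with one short Fourier/residue computation each.

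I would first set up the transference. Since $B$ has a bounded $\HI$ calculus, the logarithm correspondence recalled above (which also transfers the $\HI$ calculus) produces a $0$-sectorial $A$ with $B = \log A$ having a bounded $\HI(\Sigma_\sigma)$ calculus for some $\sigma < \pi$; moreover $U(t) = e^{itB} = A^{it}$ and $h(B) = (h\circ\log)(A)$ for $h \in \bigcup_\omega\Hol(\Str_\omega)$. By the very definitions of the classes, $g \mapsto g_e = g\circ\exp$ is an isometric isomorphism $\Ha(\R_+) \to \Wa(\R)$ which (also on the $\HI$ level) intertwines the two functional calculi via $g_e(B) = g(A)$. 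Hence $(C_2)_\alpha$ is equivalent to ``$A$ has an $R$-bounded $\Ha$ calculus'', which by Theorem~\ref{Thm Sea Ha} (here property $(\alpha)$ and the $\HI$ calculus of $A$ enter) is equivalent to ``$A$ has an $R$-bounded $\Sea$ calculus'', which in turn is equivalent, by \eqref{it Wa c}$\Leftrightarrow$\eqref{it group} of Theorem~\ref{Thm Characterizations Hoermander calculus}, to the $R[L^2(\R)]$-boundedness of $(\langle t\rangle^{-\alpha}A^{it})_{t\in\R} = (\langle t\rangle^{-\alpha}U(t))_{t\in\R}$, i.e.\ to $(a)_\alpha$. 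This settles $(C_2)_\alpha \Leftrightarrow (a)_\alpha$, and literally the same chain yields $(C_2)_\beta \Leftrightarrow (a)_\beta$ for every $\beta > \tfrac12$, which I reuse below.

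For $(a)_\alpha \Rightarrow (b)_\alpha$ it then suffices to derive $(b)_\alpha$ from $(C_2)_\alpha$. Fixing $c \ne 0$, the map $\lambda \mapsto (t+ic-\lambda)^{-1}$ is bounded holomorphic on $\Str_\omega$ for $\omega < |c|$ with $O(|\lambda|^{-1})$ decay, so $R(t+ic,B) = \bigl((t+ic-(\cdot))^{-1}\bigr)(B)$, and for $f \in L^2(\R)$ a routine approximation (approximate $f$ by compactly supported functions and use that $t \mapsto R(t+ic,B)x$ lies in $L^2(\R;X)$ together with the Cauchy integral for $B$, cf.\ \cite[Proposition~4.40]{Kr}) gives $\int_\R f(t)R(t+ic,B)x\,dt = G_{f,c}(B)x$ for $x \in D_B$, with $G_{f,c}(\lambda) = \int_\R f(t)(t+ic-\lambda)^{-1}\,dt$. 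A residue computation yields $\widehat{G_{f,c}}(\xi) = -2\pi i\,\widehat f(\xi)e^{c\xi}\chi_{(-\infty,0)}(\xi)$ for $c > 0$ (the conjugate identity for $c<0$), so by Plancherel
\[ \|G_{f,c}\|_{\Soa(\R)}^2 \lesssim \Bigl(\sup_{r>0}e^{-2|c|r}\langle r\rangle^{2\alpha}\Bigr)\|f\|_{L^2(\R)}^2 \lesssim |c|^{-2\alpha}\|f\|_{L^2(\R)}^2 \quad(0<|c|\le 1), \]
and $\lesssim\|f\|_{L^2(\R)}^2$ for $|c|\ge 1$. Since $\Soa(\R) \hookrightarrow \Wa$ (the Banach-algebra property of $\Soa$ for $\alpha>\tfrac12$, applied to each $\psi_n$), one gets $\|G_{f,c}\|_{\Wa} \lesssim |c|^{-\alpha}\|f\|_{L^2(\R)}$ for small $|c|$; hence $\{G_{f,c}(B):\|f\|_{L^2(\R)}\le 1\} \subset C|c|^{-\alpha}\cdot\{g(B):\|g\|_{\Wa}\le 1\}$, an $R$-bounded set by $(C_2)_\alpha$, which is precisely $(b)_\alpha$.

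For $(b)_\alpha \Rightarrow (C_2)_{\alpha+\epsilon}$ it is enough, by the first step, to establish $(a)_{\alpha+\epsilon}$, i.e.\ the $R[L^2(\R)]$-boundedness of $(\langle s\rangle^{-(\alpha+\epsilon)}U(s))_{s\in\R}$. I would split $\R = (-1,1)\cup\bigcup_{n\ge0}\pm[2^n,2^{n+1}]$ and, using Lemma~\ref{Lem R[E]-bounded technical}(3), bound each piece. On $(-1,1)$ the set $\{U(s):|s|\le 1\}$ is $R$-bounded (since $A$ has a bounded $\HI$ calculus and $X$ has property $(\alpha)$, by \cite[Theorem~12.8]{KuWe}), hence $R[L^2(-1,1)]$-bounded by Example~\ref{Exa R[E]-bounded}(a) and Cauchy--Schwarz. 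On a block $[2^n,2^{n+1}]$ (negative blocks being symmetric) a contour shift $t\leadsto t-i|c|$ — the strip analogue of formula \eqref{Equ Res A Mellin BIP} — gives, for $s>0$ and $c\ne0$,
\[ U(s) = \frac{1}{2\pi i}\,e^{|c|s}\int_\R e^{i\nu s}R(\nu-i|c|,B)\,d\nu, \]
the integral converging absolutely in $B(X)$ since $\|R(\nu-i|c|,B)\|\lesssim\min(|c|^{-1},|\nu|^{-1})$. Choosing $|c_n| = 2^{-n}$, so that $e^{|c_n|s}\asymp 1$ on the block, Fubini transforms $\int f(s)U(s)\,ds$ (for $f$ supported in $[2^n,2^{n+1}]$) into $\tfrac{1}{2\pi i}\int_\R g_n(\nu)R(\nu-ic_n,B)\,d\nu$ with $\|g_n\|_{L^2(\R)}\le e^2\|f\|_{L^2(\R)}$, so that $R[L^2([2^n,2^{n+1}])](U(\cdot)) \le \tfrac{e^2}{2\pi}R[L^2(\R)](R(\cdot-ic_n,B)) \lesssim |c_n|^{-\alpha} = 2^{n\alpha}$ by $(b)_\alpha$. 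Since $\langle s\rangle^{-(\alpha+\epsilon)}\asymp 2^{-n(\alpha+\epsilon)}$ there, Lemma~\ref{Lem R[E]-bounded technical}(1) makes this block contribute $\lesssim 2^{n\alpha}2^{-n(\alpha+\epsilon)} = 2^{-n\epsilon}$ to the $R[L^2(\R)]$-bound, and summing over $n$ gives $(a)_{\alpha+\epsilon}$. The main obstacle is exactly this last implication: reconstructing the group $U(s)$ from the resolvent family \emph{block by block}, with the ``distance to the real axis'' $|c_n| = 2^{-n}$ tuned so that the prefactor $e^{|c_n|s}$ stays bounded — the precise strip counterpart of the $\theta_n = \pi - 2^{-n}$ device in the proof of Theorem~\ref{Thm Characterizations Hoermander calculus} — and making the contour-shift identity and the ensuing Fubini interchange rigorous within the $R[L^2]$ formalism; the loss of $\epsilon$ is, as there, inherent in summing the resulting geometric series. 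Everything else reduces either to transference or to the elementary Fourier/residue estimate above.
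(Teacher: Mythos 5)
Your proposal is correct and follows essentially the same route as the paper: transfer to $A=e^{B}$ and apply Theorems~\ref{Thm Characterizations Hoermander calculus} and~\ref{Thm Sea Ha} to get $(C_2)_\alpha\Leftrightarrow(\text{a})_\alpha$, then use the Fourier-transform relation between $R(\cdot+ic,B)$ and $e^{ct}U(t)\chi_{\mp t>0}$ together with the bound $\sup_c |c|^\alpha e^{-|c||t|}\lesssim\langle t\rangle^{-\alpha}$ for $(\text{a})_\alpha\Rightarrow(\text{b})_\alpha$, and the dyadic block decomposition with $|c_n|=2^{-n}$ for $(\text{b})_\alpha\Rightarrow(\text{a})_{\alpha+\epsilon}$ (your explicit residue computation is just the concrete form of the kernel $K$ to which the paper applies Lemma~\ref{Lem Transformation R[E]-bounded}, and going through $G_{f,c}\in\Soa\hookrightarrow\Wa$ rather than directly through $U(t)$ is an inessential variation). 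One small slip: the integral $\int_\R e^{i\nu s}R(\nu-i|c|,B)\,d\nu$ is \emph{not} absolutely convergent in $B(X)$ (the resolvent only decays like $|\nu|^{-1}$), but this claim is not needed, since the Fubini interchange is justified at the $L^2$/weak level exactly as in Lemma~\ref{Lem Transformation R[E]-bounded}, which is what the paper uses.
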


\begin{proof}
Consider the $0$-sectorial operator $A = e^B.$
Then $(C_2)_\alpha \Longleftrightarrow (\text{a})_\alpha$ follows from Theorems \ref{Thm Characterizations Hoermander calculus} and \ref{Thm Sea Ha}.

\noindent
$(\text{a})_\alpha \Longrightarrow (\text{b})_\alpha:$
Let $R_c = |c|^\alpha R[L^2]( R(t+ic,B):\:t\in \R).$
We have to show $\sup_{c\neq 0} R_c < \infty.$
Applying Lemma \ref{Lem Transformation R[E]-bounded} with $K$ the Fourier transform and its inverse, we get
\[ R_c = \begin{cases}
R[L^2](c^\alpha e^{ct} U(t):\:t < 0),&c > 0,\\
R[L^2](|c|^\alpha e^{ct}U(t):\: t > 0),& c < 0.
\end{cases}
\]
For $t < 0,\,\sup_{c > 0} c^\alpha e^{ct} = \sup_{c > 0} \tma \ta c^\alpha e^{-|ct|} \lesssim \tma.$
Thus,
$\sup_{c>0} R[L^2](c^\alpha e^{ct} U(t):\:t < 0) \lesssim R[L^2](\tma U(t):\: t < 0) < \infty.$
The part $c < 0$ is estimated similarly.

\noindent
$(\text{b})_\alpha \Longrightarrow (\text{a})_{\alpha + \epsilon}:$
Let $R_c$ be as before.
Split $\langle t \rangle^{-(\alpha + \epsilon)} U(t)$ into the parts $t \geq 1,t \leq -1,|t| < 1,$ and further $t \geq 1$ into $t \in [2^n,2^{n+1}],n \in \N_0.$
Then $\tma \lesssim 2^{-n\alpha} \lesssim 2^{-n\alpha} e^{-2^{-n}t},$ and by Lemma \ref{Lem R[E]-bounded technical} (2),
\begin{align}
R[L^2](\langle t \rangle^{-(\alpha + \epsilon)} U(t):\:t \geq 1)
& \leq \sum_{n=0}^\infty 2^{-n\epsilon}R[L^2]( 2^{-n\alpha} e^{-2^{-n}t} U(t):\:t\in [2^n,2^{n+1}])
\nonumber \\
& \leq \sum_{n=0}^\infty 2^{-n\epsilon} \sup_{c < 0}R_c < \infty.
\nonumber
\end{align}
The estimate for $t \leq -1$ can be handled similarly.
It remains to estimate $R[L^2](\langle s \rangle^{-(\alpha + \epsilon)} U(s) :\: |s| < 1) \cong R[L^2](U(s) :\: |s| < 1).$
We have assumed that $X$ has property $(\alpha).$
Then the fact that $B$ has an $\HI$ calculus implies that $\{U(s):\: |s| < 1\}$ is $R$-bounded \cite[Corollary 6.6]{KaW2}.
For $f \in L^2([-1,1]),$ we have $\|f\|_1 \leq C \|f\|_2,$ and consequently,
\[ \left\{ \int_{-1}^1 f(s)U(s) ds :\: \|f\|_2 \leq 1 \right\} \subset C \left\{ \int_{-1}^1 f(s) U(s) ds :\: \|f\|_1 \leq 1 \right\}.\]
In other words, $(U(s):\: |s| < 1)$ is $R[L^2]$-bounded.
\end{proof}

\section*{Acknowledgment}
We thank the anonymous referee for the careful reading of the manuscript.


\begin{thebibliography}{100}

\bibitem{Alex}
G.~Alexopoulos.
\newblock Spectral multipliers on Lie groups of polynomial growth.
\newblock {\em Proc. Am. Math. Soc.} 120(3):973--979, 1994.

\bibitem{BeL}
J.~Bergh and J.~L\"ofstr\"om.
\newblock {\em Interpolation spaces. An introduction.}
\newblock Grundlehren der mathematischen Wissenschaften, 223.
  Berlin etc.: Springer, 1976.

\bibitem{BoCl}
A.~Bonami and J.-L.~Clerc.
\newblock Sommes de Ces\`aro et multiplicateurs des d\'eveloppements en harmoniques sph\'eriques.
\newblock {\em Trans. Amer. Math. Soc.} 183:223--263, 1973.

\bibitem{Bou}
J.~Bourgain.
\newblock Vector valued singular integrals and the $H^1-$BMO duality.
\newblock {\em Probability theory and harmonic analysis (Cleveland, Ohio, 1983)
Monogr. Textbooks Pure Appl. Math., Vol. 98}, p.1--19 Dekker, New York, 1986.

\bibitem{CDMY}
M.~Cowling, I.~Doust, A.~McIntosh and A.~Yagi.
\newblock Banach space operators with a bounded $H\sp \infty$ functional
  calculus.
\newblock {\em J. Aust. Math. Soc., Ser. A} 60(1):51--89, 1996.

\bibitem{CPSW}
P.~Cl\'ement, B.~de Pagter, F.~Sukochev and H.~Witvliet.
\newblock Schauder decomposition and multiplier theorems.
\newblock {\em Studia Math.} 138:135--163, 2000.

\bibitem{COSY}
P.~Chen, E.M.~Ouhabaz, A.~Sikora and L.~Yan.
\newblock Restriction estimates, sharp spectral multipliers and endpoint estimates for Bochner-Riesz means
\newblock Preprint available on http://arxiv.org/abs/1202.4052

\bibitem{DiJT}
J.~Diestel, H.~Jarchow and A.~Tonge.
\newblock {\em Absolutely summing operators.}
\newblock Cambridge Studies in Advanced Mathematics, 43. Cambridge: Cambridge
  Univ. Press, 1995.

\bibitem{DW}
M.~Duelli and L.~Weis.
\newblock Spectral projections, Riesz transforms and $\HI$-calculus for bisectorial operators.
\newblock {\em Nonlinear elliptic and parabolic problems, Progress in Nonlinear Differential Equations Appl.} 64:99--111, Birkh\"auser, Basel, 2005.

\bibitem{Duon}
X.~T. Duong.
\newblock From the $L\sp 1$ norms of the complex heat kernels to a H\"ormander
  multiplier theorem for sub-Laplacians on nilpotent Lie groups.
\newblock {\em Pac. J. Math.} 173(2):413--424, 1996.

\bibitem{DuOS}
X.~T. Duong, E.~M. Ouhabaz and A.~Sikora.
\newblock Plancherel-type estimates and sharp spectral multipliers.
\newblock {\em J. Funct. Anal.} 196(2):443--485, 2002.

\bibitem{GaMi}
J.~E. Gal\'e and P.~J. Miana.
\newblock $H^{\infty}$ functional calculus and Mikhlin-type multiplier
  conditions.
\newblock {\em Can. J. Math.} 60(5):1010--1027, 2008.

\bibitem{HaKu}
B.~H. Haak and P.~C. Kunstmann.
\newblock Admissibility of unbounded operators and wellposedness of linear systems in Banach spaces.
\newblock {\em Int. Equ. Op. Theory} 55(4):497--533, 2006.

\bibitem{Haas}
M.~Haase.
\newblock Functional calculus for groups and applications to evolution
  equations.
\newblock {\em Journal Of Evolution Equations} 7(3):529--554, 2007.

\bibitem{Haasa}
M.~Haase.
\newblock {\em The functional calculus for sectorial operators.}
\newblock Operator Theory: Advances and Applications, 169. Basel: Birkh\"auser, 2006.

\bibitem{Haasb}
M.~Haase.
\newblock A transference principle for general groups and functional calculus on UMD spaces.
\newblock {\em Math. Ann.} 345:245--265, 2009.

\bibitem{Haasc}
M.~Haase.
\newblock Transference principles for semigroups and a theorem of Peller.
\newblock {\em J. Funct. Anal.} 261(10):2959--2998, 2011.

\bibitem{HyVe}
T.~Hyt\"onen and M.~Veraar.
\newblock $R$-boundedness of smooth operator-valued functions.
\newblock {\em Integral Equations Oper. Theory} 63(3):373--402, 2009.

\bibitem{Hoa}
L.~H\"ormander.
\newblock {\em The analysis of linear partial differential operators. I.
  Distribution theory and Fourier analysis. 2nd ed.}
\newblock Grundlehren der Mathematischen Wissenschaften, 256. Berlin etc.:
  Springer, 1990.

\bibitem{KaW2}
N.~Kalton and L.~Weis.
\newblock The $H^\infty$-calculus and square function estimates, preprint.

\bibitem{Kr}
C.~Kriegler.
\newblock Spectral multipliers, $R$-bounded homomorphisms, and analytic diffusion semigroups.
\newblock {PhD-thesis, online at
http://digbib.ubka.uni-karlsruhe.de/volltexte/1000015866}

\bibitem{Kr2}
C.~Kriegler.
\newblock Spectral multipliers for wave operators.
\newblock Submitted, Preprint available on http://arxiv.org/abs/1210.4261

\bibitem{Kr3}
C.~Kriegler.
\newblock H\"ormander Functional Calculus for Poisson Estimates.
\newblock {\em Int. Equ. Op. Theory} 80(3):379--413, 2014.

\bibitem{KrW1}
C.~Kriegler and L.~Weis.
\newblock Paley-Littlewood Decomposition for Sectorial Operators and Interpolation Spaces.
\newblock Submitted.

\bibitem{KrW2}
C.~Kriegler and L.~Weis.
\newblock Spectral multiplier theorems via $\HI$ calculus and norm bounds.
\newblock Preprint.

\bibitem{KrLM}
C.~Kriegler and C.~Le Merdy.
\newblock Tensor extension properties of $C(K)$-representations and applications to unconditionality.
\newblock {\em J. Aust. Math. Soc.} 88(2):205--230, 2010.

\bibitem{KuUh}
P.C.~Kunstmann and M.~Uhl.
\newblock Spectral multiplier theorems of H\"ormander type on Hardy and Lebesgue spaces.
\newblock Preprint available on http://arxiv.org/abs/1209.0358

\bibitem{KuWe}
P.~C. Kunstmann and L.~Weis.
\newblock Maximal $L_p$-regularity for parabolic equations, Fourier multiplier
  theorems and $H^\infty$-functional calculus.
\newblock {\em Functional analytic methods for
  evolution equations. Based on lectures given at the autumn school on
  evolution equations and semigroups, Levico Terme, Trento, Italy, October
  28--November 2, 2001.} Berlin: Springer, Lect. Notes Math. 1855,
  65--311, 2004.

\bibitem{Lebe}
N.~Lebedev.
\newblock {\em Special functions and their applications. Rev. engl. ed.
  Translated and edited by Richard A. Silverman.}
\newblock New York: Dover Publications, 1972.

\bibitem{LM}
C.~Le~Merdy.
\newblock On square functions associated to sectorial operators.
\newblock {\em Bull. Soc. Math. France} 132(1):137--156, 2004.

\bibitem{Ouha}
E.~M. Ouhabaz.
\newblock {\em Analysis of heat equations on domains.}
\newblock London Mathematical Society Monographs, 31. Princeton, NJ:
  Princeton University Press, 2005.

\bibitem{Rud}
W.~Rudin.
\newblock {\em Principles of mathematical analysis.}
\newblock Third edition. International Series in Pure and Applied Mathematics. McGraw-Hill Book Co., New York-Auckland-Düsseldorf, 1976.

\bibitem{RuSi}
T.~Runst and W.~Sickel.
\newblock {\em Sobolev spaces of fractional order, Nemytskij operators and
  nonlinear partial differential equations.}
\newblock de Gruyter Series in Nonlinear Analysis and Applications, 3. Berlin:
  de Gruyter, 1996.

\bibitem{Stem}
K.~Stempak.
\newblock Multipliers for eigenfunction expansions of some Schr\"odinger operators.
\newblock {\em Proc. Am. Math. Soc.} 93(3):477--482, 1985.

\bibitem{Triea}
H.~Triebel.
\newblock {\em Theory of function spaces.}
\newblock Monographs in Mathematics, 78. Basel etc.:
  Birkh\"auser, 1983.

\end{thebibliography}
\end{document}